\newtheorem{theorem}{Theorem}[section]
\newtheorem{corollary}{Corollary}[section]
\newtheorem{lemma}{Lemma}[section]
\newtheorem{remark}{Remark}[section]
\newtheorem{definition}{Definition}[section]
\newtheorem{proposition}{Proposition}[section]
\def \a{\alpha }
\def \l{\lambda }
\begin{document}

\newcommand\nat{\mathbb N}
\newcommand\ganz{\mathbb Z}

\newcommand{\wta}{{\rm {wt} }  a }
\newcommand{\R}{\Bbb  R}

\newcommand{\wtb}{{\rm {wt} }  b }
\newcommand{\bea}{\begin{eqnarray}}
\newcommand{\eea}{\end{eqnarray}}
\newcommand{\be}{\begin {equation}}
\newcommand{\ee}{\end{equation}}
\newcommand{\g}{\frak g}
\newcommand{\hg}{\hat {\frak g} }
\newcommand{\hn}{\hat {\frak n} }
\newcommand{\h}{\frak h}
\newcommand{\V}{\Cal V}
\newcommand{\hh}{\hat {\frak h} }
\newcommand{\n}{\frak n}
\newcommand{\Z}{\Bbb Z}
\newcommand{\N}{{\Bbb Z} _{> 0} }
\newcommand{\Zp} {\Z _ {\ge 0} }
\newcommand{\Hp}{\bar H}
\newcommand{\C}{\Bbb C}
\newcommand{\Q}{\Bbb Q}
\newcommand{\1}{\bf 1}
\newcommand{\la}{\langle}
\newcommand{\ra}{\rangle}
\newcommand{\NS}{\bf{ns} }

\newcommand{\wt}{{\rm {wt} }   }

\newcommand{\E}{\mathcal E}
\newcommand{\F}{\mathcal F}
\newcommand{\X}{\bar X}
\newcommand{\Y}{\bar Y}

\newcommand{\hf}{\mbox{$\tfrac{1}{2}$}}
\newcommand{\thf}{\mbox{$\tfrac{3}{2}$}}

\newcommand{\W}{\mathcal{W}}
\newcommand{\non}{\nonumber}
\def \l {\lambda}
\baselineskip=14pt
\newenvironment{demo}[1]%
{\vskip-\lastskip\medskip
  \noindent
  {\em #1.}\enspace
  }%
{\qed\par\medskip
  }

\def \l {\lambda}
\def \a {\alpha}

\title[On parafermion vertex  algebras ]{On parafermion vertex  algebras of $\frak{sl}(2)_{-3/2}$ and $\frak{sl}(3)_{-3/2}$}

\author[]{Dra\v zen  Adamovi\' c}
\author[]{Antun Milas}
\author[]{Qing Wang}

\keywords{vertex algebra, $W$-algebra, parafermion algebra}
\subjclass[2010]{Primary    17B69; Secondary 17B20, 17B65}
\date{\today}
\begin{abstract}
We study parafermion  vertex algebras $N_{-3/2}(\frak{sl}(2))$ and $N_{-3/2}(\frak{sl}(3))$. Using  the isomorphism between
$N_{-3/2}(\frak{sl}(3))$ and the logarithmic vertex algebra  $\mathcal{W}^{0} (2)_{A_2} $  from \cite{A-TG},
we show that these parafermion vertex algebras are infinite direct sums of irreducible modules for the Zamolodchikov algebra $\mathcal{W}(2,3)$   of central charge $c=-10$, and that $N_{-3/2}(\frak{sl}(3))$ is a direct sum of irreducible $N_{-3/2}(\frak{sl}(2))$--modules.   
 As a byproduct, we prove  certain conjectures about  the vertex algebra $\mathcal{W}^0(p)_{A_2}$. We also obtain a vertex-algebraic proof of the irreducibility of a family of $\mathcal W(2,3)_{c}$ modules at $c=-10$.
\end{abstract}
\maketitle

\section{Introduction}

Parafermion vertex algebras, or parafermionic cosets, are closely connected to the so-called $Z$-algebras \cite{LP,LW1,LW2}, which played an important role in the development of the parafermion conformal field theory \cite{G,ZF}  and in the vertex-operator-theoretic interpretation of Rogers-Ramanujan type partition
identities \cite{LW1,LW2}. They are first defined
in \cite{DL} as the subalgebras of the generalized vertex algebras generated by $Z$-operators.

In the past decades, the parafermion vertex operator algebras associated to rational affine vertex operator algebras at the positive integer level were thoroughly studied and their structure is well-understood(see \cite{ALY1,ALY2,DLY,DLWY,DR,DW1,DW2,DW3,KP,JW,Wang} etc.). At other levels, including
generic levels, their structure is largely unknown.  Recently, these algebras have appeared in \cite{CFL, L-17} as  quotients of certain universal vertex algebras constructed from non-linear conformal algebras.

Among them,
a very interesting problem is to determine fusion rules for parafermion vertex algebras at rational level (cf.  \cite{ACR}).

Our general goal is to study parafermion vertex algebra beyond rational case. It is natural to start with certain examples when a parafermion vertex algebra  belongs to a certain class of $\mathcal W$-algebras. Let us mention that in the case of $\frak{sl}(2)$, parafermion
algebra $N_k(\frak{sl}(2))$ coincides with singlet vertex algebra for $k=-\frac12, -\frac43 $(cf. \cite{Wa},  \cite{A-JPAA}),  with super-singlet (cf. \cite{A-TG}, \cite{ACR}, \cite{AMP} ) for $k=-\frac23$. In this paper, we want to explore the parafermion vertex algebra at a certain non-integral admissible level, which belongs to the class of logarithmic vertex algebras. More specifically, we study the following vertex algebras:

 \begin{itemize}
 \item The parafermion vertex algebras  $N_k (\frak{sl}(2))$  and $N_k (\frak{sl}(3))$ at level $k=-\frac32$;
 \item Higher rank logarithmic vertex algebras $ \mathcal{W}(p)_{A_2}$ and $\mathcal{W}^0(p)_{A_2}$ for $p=2$;
 \item
   The  (universal) principal $W$--algebra $\mathcal W(2,3)_c = \mathcal  W^k(\frak{sl}(3), f_{pr})$, also known as
 Zamolodchikov's algebra, at central charge $c=-10$.
  \end{itemize}
 In \cite{A-TG}, it was proved that $N_{-3/2} (\frak{sl}(3)) \cong \mathcal{W}^0(2)_{A_2}$, where $\mathcal{W}(2)_{A_2}$ is a logarithmic vertex algebra (the so-called "octuplet" algebra) constructed from the lattice vertex algebra $V_{\sqrt{2}A_2}$, and where $\mathcal{W}^0(2)_{A_2}$ is the zero charge subalgebra of $\mathcal{W}(2)_{A_2}$ \cite{FT,S1}.

 One of our main results is the following theorem.
 \begin{theorem}  \leavevmode
\makeatletter
\@nobreaktrue
\makeatother
 \begin{itemize}
 \item[(1)]  $\mathcal{W}^0(2)_{A_2}$ and $\mathcal{W}(2)_{A_2} $ are completely reducible  $N_{-3/2} (\frak{sl}(2))$--modules.
  \item[(2)]  $\mathcal{W}^0(2)_{A_2} $ and $\mathcal{W}(2)_{A_2} $ are completely reducible  $\mathcal{W}(2,3)_{c=-10}$--modules.
  \item[(3)] $N_{-3/2} (\frak{sl}(2))$ is a completely reducible $\mathcal W(2,3)_{c=-10}$--module.
  \item[(4)] $N_{-3/2} (\frak{sl}(3))$ is generated by primary vectors of weights $3,4,4$.
 \end{itemize}
 \end{theorem}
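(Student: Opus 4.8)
\smallskip
\noindent\emph{Strategy of the proof.}\enspace The plan is to exploit the isomorphism $N_{-3/2}(\frak{sl}(3))\cong\W^{0}(2)_{A_2}$ of \cite{A-TG} together with the tower of vertex subalgebras
\[
\W(2,3)_{c=-10}\ \subset\ N_{-3/2}(\frak{sl}(2))\ \subset\ N_{-3/2}(\frak{sl}(3))=\W^{0}(2)_{A_2}\ \subset\ \W(2)_{A_2},
\]
all of central charge $-10$, with $\W(2,3)_{c=-10}$, $\W^{0}(2)_{A_2}$ and $\W(2)_{A_2}$ sharing a common conformal vector and $\W(2,3)_{c=-10}$ the subalgebra it generates together with a weight-$3$ primary $W^{3}$. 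Two observations reduce the work. First, the $\Z$-grading of $\W(2)_{A_2}$ by charge, with degree-$0$ part $\W^{0}(2)_{A_2}$, already writes $\W(2)_{A_2}=\bigoplus_{n}\W(2)_{A_2}[n]$ as a $\W^{0}(2)_{A_2}$-module, hence as a module over each smaller algebra, so it suffices to treat charge sectors. Second, once (2) is proved, (3) is immediate, because $N_{-3/2}(\frak{sl}(2))$ is then a $\W(2,3)_{c=-10}$-submodule of a semisimple module and a submodule of a semisimple module is semisimple.

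For (2) I would use the free-field realization. Realize $\W(2)_{A_2}$ and all the subalgebras above inside a direct sum $\bigoplus_{\lambda}F_\lambda$ of rank-$2$ Heisenberg Fock modules built from $V_{\sqrt{2}A_2}$ and its modules. With respect to the background-charge conformal vector, $F_0$ carries the free-field realization of $\W(2,3)_{c=-10}$, so every $F_\lambda$ is a $\W(2,3)_{c=-10}$-module. Since $\W(2)_{A_2}$ is cut out by the screening operators defining the octuplet \cite{FT}, and these commute with $\W(2,3)_{c=-10}$, we obtain $\W(2)_{A_2}=\bigoplus_{\lambda}\ker(\text{screenings})\big|_{F_\lambda}$ as a direct sum of $\W(2,3)_{c=-10}$-submodules, $\W^{0}(2)_{A_2}$ being the part over charge-$0$ momenta. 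It then remains to identify each screening kernel with an \emph{irreducible} $\W(2,3)_{c=-10}$-module: I would compute its graded character from a Felder-type resolution of $F_\lambda$ by Fock modules, compare it with the character of the expected simple module, and upgrade the canonical comparison map to an isomorphism by showing that every nonzero submodule of the kernel contains its lowest-weight vector. (Alternatively one can transport the question, via the free-field realization, to the admissible affine vertex algebra $V_{-3/2}(\frak{sl}(3))$ and read off irreducibility of the $\W_3$-constituents from the affine side.) This is the principal obstacle: $c=-10$ is a logarithmic central charge, $\W(2,3)_{c=-10}$ is non-rational, and no general semisimplicity criterion applies, so the screening kernels must be controlled by hand; the same analysis produces the family of irreducible $\W(2,3)_{c=-10}$-modules stated in the abstract.

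Item (1) is proved by the same mechanism relative to the rank-$1$ free-field (singlet-type) realization of $N_{-3/2}(\frak{sl}(2))$ attached to the simple root $\alpha_1$ of $\frak{sl}(3)$: one regroups the Fock modules $F_\lambda$ according to their $\alpha_1$-component, identifies each resulting summand of $\W(2)_{A_2}$ (and of $\W^{0}(2)_{A_2}$) with a screening kernel computing an $N_{-3/2}(\frak{sl}(2))$-module, and proves irreducibility exactly as above; the charge sectors are regrouped but not mixed.

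For (4) I would return to the decomposition in (2): the vacuum summand is $\W(2,3)_{c=-10}$, generated by its weight-$3$ primary $W^{3}$ (the conformal vector being recovered from $W^{3}_{(3)}W^{3}$), and the two nontrivial $\W(2,3)_{c=-10}$-summands of smallest lowest-weight occur at lowest weight $4$, with $\W(2,3)_{c=-10}$-primary (hence Virasoro-primary) lowest-weight vectors. Their presence is transparent from the octuplet structure, whose charge-$0$ part forms an adjoint $\frak{sl}(3)$-multiplet supplying the extra generators. It then remains to verify that $W^{3}$ together with those two weight-$4$ primaries --- primary vectors of weights $3,4,4$ --- generate $N_{-3/2}(\frak{sl}(3))$ as a vertex algebra; I would do this by comparing the graded dimension of the subalgebra they generate (computed from their operator products, or via the fact that the $\W(2,3)_{c=-10}$-submodule they generate must fill out the decomposition from (2)) with the character of $N_{-3/2}(\frak{sl}(3))$ read off from the lattice realization.
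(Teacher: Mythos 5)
Your overall skeleton (the tower of subalgebras, deducing (3) from (2) by passing to a submodule of a semisimple module, reading off the weight-$3,4,4$ generators from the decomposition) matches the paper's, but the central step of your argument for (2) contains a claim that is false and, where it is not false, lands exactly on the obstruction the paper is designed to avoid. You propose to write $\W(2)_{A_2}=\bigoplus_{\lambda}\ker(\text{screenings})\big|_{F_\lambda}$ and then ``identify each screening kernel with an irreducible $\W(2,3)_{c=-10}$-module.'' The kernel of the screenings in a single Fock module is very far from irreducible: already for $\lambda=0$ it equals $\W^{0}(2)_{A_2}$ itself, which by the paper's Theorem 8.2(3) decomposes as $\bigoplus_{m\equiv n\,(3)}\min(m+1,n+1)\,L^{W}(-10,h_{m,n},\beta_{m,n})$ --- infinitely many irreducibles with unbounded multiplicities. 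Moreover, your fallback mechanism for irreducibility (a Felder-type resolution of $F_\lambda$ and comparison with the ``expected'' simple character) presupposes knowledge of the embedding structure of Verma/Fock modules for $\W(2,3)$ at $c=-10$, which the paper explicitly flags (Remark 8.1 on Koos--van Driel) as conjectural precisely at this central charge; so the ``principal obstacle'' you acknowledge is not actually overcome. The paper's route around it is different in kind: it constructs an $\frak{sl}(3)$-action by derivations on $\W(2)_{A_2}$ (screenings $E_i$ plus explicitly built $F_i$, recovering Sugimoto's theorem for $p=2$), proves simplicity of $\W(2)_{A_2}$ from the parafermion/Heisenberg-coset realization of its charge sectors inside the simple algebra $L_{-3/2}(\frak{sl}(3))$, and then invokes quantum Galois theory for the compact group $PSU(3)$ to get complete reducibility as an $\frak{sl}(3)\times\W(2)_{A_2}^{\frak{sl}(3)}$-module with \emph{a priori} irreducible multiplicity spaces $L[m,n]$. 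Only then does a character comparison enter, and only in the soft form of the Arakawa--Frenkel upper bound ${\rm ch}[L^{W}]\leq{\rm ch}[T^{\kappa}_{\lambda,0}]$ matched against the known character of $\W^{0}(2)_{A_2}$; this simultaneously identifies $\W(2)_{A_2}^{\frak{sl}(3)}\cong\W(2,3)_{c=-10}$ and proves irreducibility of the $T^{\kappa}_{\lambda,0}$, rather than assuming it.

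Your treatment of (1) has the analogous gap: the irreducible $N_{-3/2}(\frak{sl}(2))$-constituents are not obtained from rank-one screening kernels (and $N_{-3/2}(\frak{sl}(2))$ is not a singlet algebra at $k=-\tfrac32$). The paper instead realizes $\W^{0}(2)_{A_2}$ as the Heisenberg coset of the degree-zero part $\mathcal V^{(2)}_{0}=\bigoplus_{s}L_{-3/2}(2s\omega_1)$ of the $N=4$ algebra, so that $\W^{0}(2)_{A_2}=\bigoplus_{s}N_{-3/2}(2s)$ with each summand irreducible because it is the parafermionic coset of an irreducible affine module; the identification $N_{-3/2}(\frak{sl}(2))=\W(2)_{A_2}^{\frak{gl}(2)}$ then drives both (1) and the explicit decomposition in (3). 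For (4) your idea is close in spirit, but the paper does not compare characters of the generated subalgebra: it uses the $\frak{sl}(2)$-action and a Dong--Griess fusion argument to prove $N_{-3/2}(2)\cdot N_{-3/2}(2\ell)=N_{-3/2}(2\ell+2)+N_{-3/2}(2\ell-2)$, which gives generation by $N_{-3/2}(\frak{sl}(2))+N_{-3/2}(2)$ directly. To repair your write-up you would need to replace the ``irreducible screening kernel'' step by the simplicity-plus-quantum-Galois mechanism (or supply an independent proof of the $c=-10$ embedding structure), since without one of these the semisimplicity in (1)--(3) is not established.
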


Note that $N_{-3/2} (\frak{sl}(2))$ is conformally embedded into    $\mathcal W  ^0(2) _{A_2}$,
which  is a parafermionic analog of the conformal embedding $\frak{gl}(2) \hookrightarrow \frak{sl}(3) $ at $k=-\frac32$ investigated in \cite{AKMPP-16}. We prove in Proposition \ref{invarijant-1} the result:

\begin{itemize}
\item $\mathcal W   ^{0}  (2)_{A_2} = \bigoplus_{s \in {\Z}} N_{-3/2} (2 j \omega_1).$
\end{itemize}

 Our next goal is to determine the decomposition of parafermion algebras as $\mathcal W(2,3)$--modules. Fortunately, the characters of $\mathcal{W}^0(2)_{A_2}$ is known, and from its expression one can conjecture the decomposition of $\mathcal{W}^0(2)_{A_2}$ as $\mathcal W(2,3)$--modules.  But there are two problems:
 \begin{itemize}
 \item It seems that  the character formula for  irreducible $\mathcal W(2,3)$--modules has not  been rigorously proved (cf.   Remark \ref{koos}).

 \item One needs to identify singular  vectors for  $\mathcal{W}^0(2)_{A_2}$ and its subalgebras.

 \end{itemize}
Although the primary goal of the paper is     not the study of the  algebra $\mathcal W (2)_{A_2} $, we need to use some elements of its representation theory to solve the above mentioned problems.
 It turns out that the most efficient tool is to use $\frak{sl}(3)$--action on $\mathcal W (2)_{A_2} $.

 A general Lie algebra action on $\mathcal W(p)_Q$ was conjectured by  Feigin-Tipunin \cite{FT}, and recently was proved by S. Sugimoto \cite{Su}. In the case  of the triplet vertex algebra, the $\frak{sl}(2)$--action was obtained in   \cite{AdM-triplet},  \cite{ALM}.
So we need this action for $\frak{sl}(3)$ and $p=2$. Next problem is that in general,  it is still not proved that  $\mathcal W(p)_Q$ is a simple vertex algebra, so one can not prove a semi-simplicity result by applying quantum Galois theory. But in our case we can prove that
$\mathcal W(2)_{A_2}$ is simple by applying   the explicit realization of $L_{-3/2}(\frak{sl}(3))$ from \cite{A-TG} and using identification of the subalgebra  $\mathcal W ^0 (2)_{A_2}$ as a parafermion subalgebra $N_{-3/2}(\frak{sl}(3))$  for which we know that it  is simple.
We get:
  \begin{theorem}

  \item[(1)] The vertex algebra $\mathcal W(2)_{A_2}$ is simple.

 \item[(2)] $\mathcal W(2)_{A_2} $ is a completely reducible $\frak{sl}(3) \times  \mathcal W(2,3)_{c=-10}$--module and the following decomposition holds
 $$ \mathcal W(2)_{A_2} = \bigoplus _{\lambda \in P_+ \cap Q} V_{A_2} (\lambda) \otimes  T^{\kappa =1/2} _{\lambda, 0}.$$

 \item[(3)]  The $\mathcal W(2,3)_{c=-10}$--module $T^{\kappa =1/2} _{\lambda, 0}$ is irreducible.
  \end{theorem}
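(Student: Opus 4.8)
The plan is to obtain statements (2) and (3) from (1): once $\mathcal{W}(2)_{A_2}$ is known to be simple, the $\frak{sl}(3)$-action on it together with the quantum Galois correspondence does the rest. For (1) I would work inside the explicit free-field realization of $L_{-3/2}(\frak{sl}(3))$ from \cite{A-TG}, under which the parafermion subalgebra $N_{-3/2}(\frak{sl}(3))$ is identified with $\mathcal{W}^0(2)_{A_2}$ and $\mathcal{W}(2)_{A_2}$ is realized as a vertex subalgebra of the lattice vertex algebra $V_{\sqrt{2}A_2}$. Since $L_{-3/2}(\frak{sl}(3))$ is simple, its parafermion coset $N_{-3/2}(\frak{sl}(3))$ is simple, hence so is $\mathcal{W}^0(2)_{A_2}$. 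Now $\mathcal{W}(2)_{A_2}=\bigoplus_{\mu}\mathcal{W}(2)_{A_2}^{(\mu)}$ is graded by lattice charge (equivalently, by the eigenvalues of the rank-two Heisenberg zero modes), with degree-zero component $\mathcal{W}^0(2)_{A_2}$. A nonzero ideal $I$ is graded for this grading, and the point of the realization is to show that it must meet the degree-zero component nontrivially---the vertex-algebra products pair the charged components into $\mathcal{W}^0(2)_{A_2}$ nondegenerately, which one verifies inside the simple lattice algebra $V_{\sqrt{2}A_2}$---after which simplicity of $\mathcal{W}^0(2)_{A_2}$ forces $I=\mathcal{W}(2)_{A_2}$.

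For (2) and (3) I would next bring in the $\frak{sl}(3)$-action on $\mathcal{W}(2)_{A_2}$ (conjectured by Feigin--Tipunin \cite{FT}, proved in general by Sugimoto \cite{Su}; for $p=2$ this is the ``octuplet'' realization), given by the zero modes of the long screenings together with the Heisenberg zero modes. This action is by derivations of the vertex algebra, hence preserves ideals, and it is locally finite; moreover the conformal vector $L$ and the weight-three primary generating $\mathcal{W}(2,3)_{c=-10}\subseteq\mathcal{W}^0(2)_{A_2}$ are $\frak{sl}(3)$-invariant, so $\frak{sl}(3)\times\mathcal{W}(2,3)_{c=-10}$ acts. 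Since $\frak{sl}(3)$ is reductive and the action is locally finite, $\mathcal{W}(2)_{A_2}=\bigoplus_{\lambda\in P_+\cap Q}V_{A_2}(\lambda)\otimes T_\lambda$, where the multiplicity spaces $T_\lambda=\mathrm{Hom}_{\frak{sl}(3)}(V_{A_2}(\lambda),\mathcal{W}(2)_{A_2})$ are modules over the commutant $C:=\mathrm{Com}(\frak{sl}(3),\mathcal{W}(2)_{A_2})=\mathcal{W}(2)_{A_2}^{\frak{sl}(3)}\supseteq\mathcal{W}(2,3)_{c=-10}$.

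Now, since $\mathcal{W}(2)_{A_2}$ is simple by (1), the quantum Galois correspondence for the action of the reductive group integrating $\frak{sl}(3)$ applies: $C$ is simple and the $T_\lambda$ are irreducible and pairwise non-isomorphic $C$-modules. It remains to prove $C=\mathcal{W}(2,3)_{c=-10}$, so that irreducibility over $C$ becomes irreducibility over $\mathcal{W}(2,3)_{c=-10}$; I would do this by a graded-dimension count, computing $\mathrm{ch}\,C=\mathrm{ch}\,T_0$ as the $\frak{sl}(3)$-invariant part of the known character of $\mathcal{W}(2)_{A_2}$ (extracted by a Weyl-integration/constant-term procedure) and checking that it equals $\prod_{n\ge 2}(1-q^{n})^{-1}\prod_{n\ge 3}(1-q^{n})^{-1}$, the graded dimension of $\mathcal{W}(2,3)_{c=-10}$; as $\mathcal{W}(2,3)_{c=-10}\subseteq C$ with equal graded dimension in every degree, the two coincide. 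Finally $T_\lambda\cong T^{\kappa=1/2}_{\lambda,0}$ follows by matching the $\mathcal{W}(2,3)$-highest weight of $T_\lambda$ (read off from an $\frak{sl}(3)$-lowest weight vector inside $V_{A_2}(\lambda)\otimes T_\lambda$) and using irreducibility; this yields (2), together with the standard fact that $V_{A_2}(\lambda)\otimes T^{\kappa=1/2}_{\lambda,0}$ is irreducible over $\frak{sl}(3)\times\mathcal{W}(2,3)_{c=-10}$ because the first factor is finite-dimensional irreducible and the second is irreducible. Statement (3) is precisely the irreducibility of $T^{\kappa=1/2}_{\lambda,0}$ established above.

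I expect the main obstacle to be (1): extracting from the realization of \cite{A-TG} exactly what is needed to push an arbitrary nonzero ideal of $\mathcal{W}(2)_{A_2}$ into $\mathcal{W}^0(2)_{A_2}$, together with the simplicity of the parafermion coset $N_{-3/2}(\frak{sl}(3))$. The second delicate point is the exact identification $C=\mathcal{W}(2,3)_{c=-10}$---ruling out any $\frak{sl}(3)$-invariant fields beyond the Zamolodchikov algebra---for which the invariant-character computation (or, alternatively, an explicit analysis of singular vectors inside $\mathcal{W}^0(2)_{A_2}$ in the spirit of Proposition~\ref{invarijant-1}) is required. Once these two steps are in place, (2) and (3) follow formally from quantum Galois theory applied to the simple vertex algebra $\mathcal{W}(2)_{A_2}$.
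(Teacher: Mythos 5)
Your step (1) and the quantum-Galois part of (2) follow the paper's route closely: the paper (Lemma \ref{simple}) also grades $\mathcal W(2)_{A_2}$ by the root lattice, uses simplicity of the parafermion coset $N_{-3/2}(\frak{sl}(3))=\mathcal W^0(2)_{A_2}$ together with non-vanishing of products inside a simple ambient algebra to force a nonzero ideal to meet the charge-zero component, and then (Proposition \ref{comp-red-galois}) invokes Dong--Mason quantum Galois theory for the compact group integrating the $\frak{sl}(3)$-action to get $\mathcal W(2)_{A_2}=\bigoplus_{\lambda\in P_+\cap Q}V_{A_2}(\lambda)\otimes L[\lambda]$ with irreducible, pairwise non-isomorphic multiplicity spaces over the commutant $C$. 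Up to that point your argument is sound.

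The gap is in the final identification. Matching highest weights and ``using irreducibility'' identifies the multiplicity space $T_\lambda$ with the \emph{simple} module $L^{W}(-10,h_{m,n},\beta_{m,n})$, i.e.\ with the irreducible quotient of $T^{\kappa=1/2}_{\lambda,0}$ --- not with $T^{\kappa=1/2}_{\lambda,0}$ itself. For rational $\kappa$ the module $T^{\kappa}_{\lambda,0}=H_{DS}^0(V^\kappa(\lambda))$ is a priori reducible (the paper stresses exactly this in the proof of Lemma \ref{irr-upper}), and statement (3) is precisely the assertion that it is not; so your appeal to irreducibility here is circular. What is needed, and what your single graded-dimension check at $\lambda=0$ does not supply, is the character equality ${\rm ch}[T_\lambda]={\rm ch}[T^{\kappa=1/2}_{\lambda,0}]$ for \emph{every} $\lambda$. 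The paper obtains this by combining: (a) the a priori inequality ${\rm ch}[L^{W}(-10,h_{m,n},\beta_{m,n})]\le{\rm ch}[T^{\kappa}_{\lambda,0}]$ from the Arakawa--Frenkel formula (Lemma \ref{irr-upper}); (b) the multiplicity count $\mathcal W^0(2)_{A_2}=\bigoplus\min(m+1,n+1)L[m,n]$ coming from $\dim V_{A_2}(\lambda)_0=\min(m+1,n+1)$; and (c) the closed character formula for $\mathcal W^0(2)_{A_2}$ from \cite{BKMZ} (Theorem \ref{BKMZ}), which equals $\sum\min(m+1,n+1)\,{\rm ch}[T^{\kappa}_{\lambda,0}]$. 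Summing the inequalities (a) against the multiplicities (b) and comparing with (c) forces termwise equality, which simultaneously yields $C\cong\mathcal W(2,3)_{c=-10}$, $L[m,n]\cong T^{\kappa=1/2}_{\lambda,0}$, and the irreducibility claimed in (3). Your alternative of extracting ${\rm ch}[C]$ by Weyl integration would require the full $\frak{sl}(3)$-refined character of $\mathcal W(2)_{A_2}$ and, even if carried out, only settles the case $\lambda=0$; the external character identity of \cite{BKMZ} (or an equivalent identity for each weight space) is the essential input your proposal is missing.
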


We apply this result on the decomposition of  $N_{-3/2} (\frak{sl}(2))$ as $\mathcal{W}(2,3)_{c=-10}$--modules. First we show in Proposition \ref{invarijant-1}:

\begin{itemize}
\item $N_{-3/2} (\frak{sl}(2)) =\left( \mathcal W  (2) _{A_2}  \right)^{\frak{gl}(2)}$.
\end{itemize}

As a consequence we get:

\begin{theorem}
$N_{-3/2} (\frak{sl}(2))  = \bigoplus _{\lambda \in P_+ \cap Q} T^{\kappa =1/2} _{\lambda, 0}.$
\end{theorem}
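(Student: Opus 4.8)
\emph{Proof strategy.} The identity is the $\frak{gl}(2)$--invariant shadow of the $\frak{sl}(3)\times\mathcal{W}(2,3)_{c=-10}$--decomposition of $\mathcal{W}(2)_{A_2}$ obtained above, so the plan is to extract it by passing to $\frak{gl}(2)$--invariants.

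First I would invoke Proposition~\ref{invarijant-1}, which gives $N_{-3/2}(\frak{sl}(2))=\bigl(\mathcal{W}(2)_{A_2}\bigr)^{\frak{gl}(2)}$, the fixed points of the action of $\frak{gl}(2)\hookrightarrow\frak{sl}(3)$ on $\mathcal{W}(2)_{A_2}$. The $\mathcal{W}(2,3)_{c=-10}$--action commutes with the full $\frak{sl}(3)$--action, hence with $\frak{gl}(2)$, and the decomposition
\[
 \mathcal{W}(2)_{A_2}=\bigoplus_{\lambda\in P_+\cap Q}V_{A_2}(\lambda)\otimes T^{\kappa=1/2}_{\lambda,0}
\]
is one into $\frak{sl}(3)\times\mathcal{W}(2,3)_{c=-10}$--submodules; since taking $\frak{gl}(2)$--invariants is a degreewise linear operation it commutes with this (locally finite) direct sum and acts only on the first tensor factor, so
\[
 N_{-3/2}(\frak{sl}(2))=\bigoplus_{\lambda\in P_+\cap Q}V_{A_2}(\lambda)^{\frak{gl}(2)}\otimes T^{\kappa=1/2}_{\lambda,0}.
\]

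It then remains to prove the branching statement $\dim V_{A_2}(\lambda)^{\frak{gl}(2)}=1$ for every $\lambda\in P_+\cap Q$. A $\frak{gl}(2)$--invariant vector of $V_{A_2}(\lambda)$ is one killed by the semisimple part $\frak{sl}(2)\subset\frak{gl}(2)$ and of weight zero for the central part, so this is a multiplicity in the restriction of the irreducible $\frak{sl}(3)$--module $V_{A_2}(\lambda)$ along the conformal embedding; I would settle it by a direct weight/branching computation, producing one such invariant vector and ruling out others by a Kostant--type count. The delicate point, which I expect to be the main obstacle, is the interaction between the $\frak{gl}(2)$--action (in particular its abelian Heisenberg part) and the charge grading on $\mathcal{W}(2)_{A_2}$: it is precisely this that forces the invariant space to be one--dimensional for \emph{every} $\lambda\in P_+\cap Q$, and hence that pins the index set of the final direct sum to all of $P_+\cap Q$ rather than to a proper sub--monoid. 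Granting this, and recalling from the theorem above that each $T^{\kappa=1/2}_{\lambda,0}$ is irreducible, the last two displays combine to give $N_{-3/2}(\frak{sl}(2))=\bigoplus_{\lambda\in P_+\cap Q}T^{\kappa=1/2}_{\lambda,0}$; in particular this confirms the decomposition of the parafermion character that was read off earlier from the character of $\mathcal{W}^0(2)_{A_2}$.
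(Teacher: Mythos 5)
Your overall route is exactly the paper's: identify $N_{-3/2}(\frak{sl}(2))$ with $\mathcal W(2)_{A_2}^{\frak{gl}(2)}$ via Proposition \ref{invarijant-1}, feed in the $\frak{sl}(3)\times\mathcal W(2,3)_{c=-10}$--decomposition of Theorem \ref{main1}, and reduce everything to the branching number $\dim V_{\frak{sl}(3)}(\lambda)^{\frak{gl}(2)}$. The problem is that the value you assert at this crucial step is wrong. For $\lambda=n\omega_1+m\omega_2$ the space of $\frak{gl}(2)$--invariants has dimension $\delta_{n,m}$, not $1$; this is the paper's Lemma \ref{invarijante-2} (a standard $\frak{gl}(3)\downarrow\frak{gl}(2)$ interlacing computation, cf.\ \cite[Chapter 8]{GW}). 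An invariant vector must lie in the zero weight space and span a trivial $\frak{sl}(2)$--summand there, and this happens exactly once when $n=m$ and never otherwise. Already $\lambda=3\omega_1\in P_+\cap Q$ is a counterexample: the zero weight space of the ten--dimensional module $V_{\frak{sl}(3)}(3\omega_1)\cong \mathrm{Sym}^3(\C^3)$ is one--dimensional, spanned by $e_1e_2e_3$, and $E_1\cdot e_1e_2e_3=e_1^2e_3\neq 0$, so there is no $\frak{gl}(2)$--invariant at all. You correctly flagged this branching statement as the main obstacle, but then guessed the wrong answer for it rather than proving it.

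As a consequence the index set of the final direct sum is not all of $P_+\cap Q$ but only the ray $\Z_{\ge 0}\,\rho=\Z_{\ge 0}(\omega_1+\omega_2)$; this is the precise form established in Theorem \ref{dec-sl2-10}, namely $N_{-3/2}(\frak{sl}(2))\cong\bigoplus_{m\ge 0}L^{W}(-10,2m(m+1),0)$, where $T^{\kappa=1/2}_{m\rho,0}\cong L^{W}(-10,h_{m,m},\beta_{m,m})$ and $h_{m,m}=2m(m+1)$, $\beta_{m,m}=0$ (the display in the introduction is stated loosely). A character check would have exposed the error: summing $\mathrm{ch}[T^{\kappa=1/2}_{\lambda,0}]$ over all of $P_+\cap Q$ reproduces the numerator in Theorem \ref{BKMZ} \emph{without} the multiplicities $\min(m+1,n+1)$, which already strictly dominates $\mathrm{ch}[N_{-3/2}(\frak{sl}(2))]$ as computed in Lemma \ref{2.1}; only the diagonal terms $m=n$ give
\begin{equation*}
\frac{\sum_{m\ge 1}q^{2m(m-1)}(1-q^{m})^{2}(1-q^{2m})}{(q)_\infty^{2}},
\end{equation*}
which is the parafermion character. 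With Lemma \ref{invarijante-2} in place of your branching claim, the rest of your argument (commuting the invariants functor past the direct sum and using irreducibility of the $T^{\kappa=1/2}_{\lambda,0}$) goes through and coincides with the paper's proof.
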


\vskip 5mm

{\bf Acknowledgments:}
This work was partially done during the visit of D.A. to Xiamen in January 2019, and during  the conference Representation Theory XVI in Dubrovnik in June 2019.

 D.A. is  partially supported  by the
QuantiXLie Centre of Excellence, a project cofinanced
by the Croatian Government and European Union
through the European Regional Development Fund - the
Competitiveness and Cohesion Operational Programme
(KK.01.1.1.01.0004). A.M. was partially supported by the NSF Grant DMS-1601070. Q.W.  is partially supported by China NSF grants (Nos.11531004, 11622107).

\section{Preliminaries}

\subsection{Settings and known facts}
In this part we setup some notation and summarize facts we need later.

\begin{itemize}
\item Let $\g $ be the simple Lie algebra with Cartan subalgebra $\h$ and triangular decomposition $\g = \frak n_{-} + \frak h + \frak n_{+}$.
\item Let $\hg$ be the associated affine Lie algebra, and $\hh$ be the associated Heisenberg subalgebra.
\item Let $V^k(\g)$ be the universal affine vertex operator algebra of level $k$ associated to the simple Lie algebra $\g$.
\item Let $L_k(\g)$ be the simple quotient of  $V^k(\g)$.
\item Let $N^k(\g)= \{ v \in V^k(\g) \ \vert \ h(n) v =0 \ \ h \in {\h}, n \in {\Z}_{\ge 0} \}$ be the parafermion subalgebra of $V^k(\g)$.
\item  Let $N_k(\g)= \{ v \in L_k (\g) \ \vert \ h(n) v =0 \ \ h \in {\h}, n \in {\Z}_{\ge 0} \}$ be the parafermion subalgebra of $L_k(\g)$.
\item For a $\lambda \in P_+$, let $V_{\g}(\mu)$ be the irreducible finite-dimensional $\g$--module with the highest weight $\lambda$, where $P_{+}$ denotes the set of dominant integral weights for $\g$.
\item Let $V^k(\lambda)$ be the generalized Verma module for $\hg$--module induced from $\g$--module $V_{\g}(\lambda)$. Let $L_k(\lambda)$ be its simple quotient.
\item   For $\lambda, \mu \in  P_+$, let $T ^{k+3} _{\lambda,\mu}$ denotes the $\mathcal W^k (\g, f_{pr})$--module obtained as
$ H_{DS} (  V^{k} (\lambda - (k+3) \mu) $ (cf. \cite{AF}).
\item   For $k= -3 + \frac{1}{p}$ and $\g=\frak{sl}(3)$,   the universal affine  vertex algebra $V^k(\frak{sl}(3))$ is simple (cf. \cite{GK}), and therefore  by \cite{Arakawa}  $   H_{DS} (  V^{k} (\frak{sl}(3)) = \mathcal  W^k(\frak{sl}(3), f_{pr})$ is simple. In particular, $\mathcal W(2,3)_{c=-10}$ is a simple vertex algebra.
\end{itemize}
We shall need the following facts which are well-known. Let $\g =\frak{sl}(2)$  with a Chevalley basis $\{e,f,h\}$ and let $k=-\frac32$. Then we have:
\begin{itemize}
\item $V^k(\g) = L_k(\g)$.
\item $V^k(j \omega_1) = L_k(j \omega_1)$, $j \in {\Z}_{\ge 0}$,  where $\omega_1$ is the fundamental dominant weight for $\frak{sl}(2)$.

\item $N^k(j):=N^k(j \omega_1) = N_k(j \omega_1)$, $j \in {\Z}_{\ge 0}$, where $N^k(j) =\{ v \in   V^k (j \omega_1) \ \vert \ h(n) v = 0, \ \forall \ n \in {\Z}_{\ge 0} \}$ and $N_k(j) =\{ v \in   L_k(j \omega_1) \ \vert \ h(n) v = 0, \ \forall \ n \in {\Z}_{\ge 0} \}$.
\end{itemize}
We denote by ${\rm ch}[M](q):={\rm tr}_M q^{L(0)}$  the character of a $V$-module $M$; from the context it should be clear what the vertex algebra is. Also, for simplicity we suppressed the conformal anomaly $-\frac{c}{24}$.

\subsection{ The vertex algebra $\mathcal W(2,3)_{c}$ }

\label{prel-1}
Let $\mathcal W(2,3)_{c}$ denotes the principal affine $W$--algebra  $W^k(\frak{sl}(3), f_{pr})$ of central charge $c=c_k =2 - 24\frac{(k+2)^2}{k+3}$ \cite{Arakawa}. It is generated by the Virasoro field $L(z) = \sum_{m\in {\Z} }  L(m) z^{-m-2}$ and another field of conformal
weight $3$:
$$W(z) = \sum _{m \in {\Z}} W(m) z^{-m-3} $$
satisfying bracket relations
\begin{align*}
[L(m),W(n)]&=(2m-n)W(m+n) \\
[W(m),W(n)]&=\frac{(22+5c)c}{48 \cdot 3 \cdot 5!} (m^2-4)(m^2-1) m \delta_{m+n,0} \\
&+\frac13(m-n)\Lambda_{m+n}+\frac{(22+5c)(m-n)}{48 \cdot 30}(2m^2-mn+2n^2-8)L(m+n),
\end{align*}
where $\Lambda=L(-2)^2{\bf 1}-\frac35 L(-4){\bf 1}$.
 Let
$L^{W}(c, h, h_W)$ denotes the irreducible highest weight $\mathcal W(2,3)_{c}$--module of the highest weight $(h,h_W)$ with respect to $(L(0), W(0))$.

Next we discuss characters of modules for the $W(2,3)$ vertex algebra at  $c=-10$.
Out strategy is to first give an upper bound for graded dimensions of a family of irreducible  $W(2,3)$-modules parametrized with dominant integral weights.

We closely follow Arakawa-Frenkel's paper here \cite{AF}. We let $\kappa=3+k$, where $k$ is the level.
Also, for $\lambda \in P_+$, weight lattice of $\frak{sl}(3)$, we denote by
 ${{V}}^\kappa({\lambda})$ the Weyl module with the top degree isomorphic to $V_{\frak{sl}(3)}(\lambda)$. Then in \cite{AF}, a family of modules for the universal $W(2,3)$-algebra are defined $T_{\lambda,\mu}^\kappa=H_{DS}^0({V}^\kappa({\lambda-\kappa \mu}))$.
We only consider modules with $\mu=0$. For given central character determined by $\lambda$, we denote by $\beta_{\lambda}$ the $W(0)$ eigenvalue on the highest weight vector.

For two $q$-series $f$ and $h$, we write $f(q) \leq h(q)$ if  $${\rm Coeff}_{q^n} f(q) \leq {\rm Coeff}_{q^n} h(q),$$
for every $n$. Then results from \cite{AF} give
\begin{lemma} \label{irr-upper}
For every $m,n \in \mathbb{Z}_{\geq 0}$, we have:
$$ {\rm ch}[L^{W}(-10, \frac23m^2+\frac23n^2+\frac23 mn+m+n, \beta_{m,n})](q)
$$
$$\leq \frac{q^{\frac23m^2+\frac23n^2+\frac23 mn+m+n}(1-q^{m+1})(1-q^{n+1})(1-q^{m+n+2})}{(q;q)_\infty^2}.$$
\end{lemma}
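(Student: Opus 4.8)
The plan is to realize the right-hand side as the exact graded character of the Drinfeld--Sokolov reduction $T^\kappa_{\lambda,0}=H^0_{DS}(V^\kappa(\lambda))$ of the Weyl module at $\kappa=\tfrac12$ (equivalently, level $k=-\tfrac52$, for which $c_k=-10$), with $\lambda=m\omega_1+n\omega_2$, and then to exploit that $L^W(-10,h_{m,n},\beta_{m,n})$ is the irreducible quotient of $T^\kappa_{\lambda,0}$, so that ${\rm ch}[L^W(-10,h_{m,n},\beta_{m,n})](q)\le{\rm ch}[T^\kappa_{\lambda,0}](q)$.

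For the first point I would recall from \cite{AF} that $T^\kappa_{\lambda,0}$ is a highest weight $\mathcal W(2,3)_{c=-10}$--module with one--dimensional top space, on which $L(0)$ acts by $h_\lambda=\frac{(\lambda,\lambda+2\rho)}{2\kappa}-(\lambda,\rho^\vee)$ and $W(0)$ by the scalar $\beta_\lambda$, which by the notational convention is $\beta_{m,n}$. A short computation in the root datum of $\frak{sl}(3)$ (where $(\lambda,\lambda)=\tfrac23 m^2+\tfrac23 n^2+\tfrac23 mn$ and $(\lambda,\rho)=m+n$) gives, at $\kappa=\tfrac12$,
$$h_\lambda=\tfrac23 m^2+\tfrac23 n^2+\tfrac23 mn+m+n,$$
so the irreducible quotient of $T^\kappa_{\lambda,0}$ is exactly $L^W(-10,\tfrac23 m^2+\tfrac23 n^2+\tfrac23 mn+m+n,\beta_{m,n})$. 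It therefore suffices to prove the equality
$${\rm ch}[T^\kappa_{\lambda,0}](q)=\frac{q^{h_\lambda}(1-q^{m+1})(1-q^{n+1})(1-q^{m+n+2})}{(q;q)_\infty^2}.$$

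For this I would use the Euler--Poincar\'e principle, again following \cite{AF}. Inducing the Bernstein--Gelfand--Gelfand resolution of $V_{\frak{sl}(3)}(\lambda)$ yields a finite resolution of $V^\kappa(\lambda)$ by affine Verma modules $M(w\cdot\lambda)$, $w$ ranging over the symmetric group $S_3$. Since $\kappa$ is non--critical, vanishing of higher $H_{DS}$ on objects of category $\mathcal O$ (as used throughout \cite{AF}) gives
$${\rm ch}[T^\kappa_{\lambda,0}](q)=\sum_{w\in S_3}(-1)^{\ell(w)}{\rm ch}[H^0_{DS}(M(w\cdot\lambda))](q)=\frac{1}{(q;q)_\infty^2}\sum_{w\in S_3}(-1)^{\ell(w)}q^{h_{w\cdot\lambda}},$$
using ${\rm ch}[H^0_{DS}(M(\mu))](q)=q^{h_\mu}/(q;q)_\infty^2$ (the reduced Verma module has the size of two free bosons, one per simple root of $\frak{sl}(3)$). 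Because the quadratic part of $h_{w\cdot\lambda}$ is Weyl invariant, the alternating sum is a principally specialized Weyl numerator: applying the ring homomorphism $e^{\nu}\mapsto q^{-(\nu,\rho)}$ to the Weyl character formula for $V_{\frak{sl}(3)}(\lambda)$ and simplifying with the $q$--analogue of the Weyl dimension formula collapses it to $q^{h_\lambda}(1-q^{m+1})(1-q^{n+1})(1-q^{m+n+2})$, the three factors being indexed by the positive roots $\alpha$ with $\langle\lambda+\rho,\alpha^\vee\rangle=m+1,\ n+1,\ m+n+2$; the absence of any extra power of $q$ is confirmed by the case $m=n=0$, where the right-hand side must reduce to the vacuum character $(1-q)^2(1-q^2)/(q;q)_\infty^2$ of $\mathcal W(2,3)_{c=-10}$. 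Combined with the first point, this proves the lemma.

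The only genuine work is in the last step: keeping the $\rho^\vee$--shift in the conformal grading of $H^0_{DS}$ accurate enough for the $S_3$--sum to telescope to the clean product, and making sure the higher--cohomology vanishing used in \cite{AF} really applies to the possibly reducible Weyl modules $V^\kappa(\lambda)$ at $\kappa=\tfrac12$. Neither is serious, since $\kappa=\tfrac12$ is non--critical and each $V^\kappa(\lambda)$ lies in category $\mathcal O$; the substance of the lemma is the careful transcription of \cite{AF} to $c=-10$ together with the identification of the family $L^W(-10,h_{m,n},\beta_{m,n})$ with irreducible quotients of Drinfeld--Sokolov reductions of Weyl modules.
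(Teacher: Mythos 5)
Your proposal is correct and follows essentially the same route as the paper: identify the right-hand side as ${\rm ch}[T^\kappa_{\lambda,0}](q)$ at $\kappa=\tfrac12$ via the Arakawa--Frenkel character formula, then use that $L^W(-10,h_{m,n},\beta_{m,n})$ is the irreducible quotient of $T^\kappa_{\lambda,0}$ to get the coefficientwise inequality. The only difference is that the paper simply cites formula (5.7) of \cite{AF}, whereas you additionally sketch its derivation via the BGG resolution and the Euler--Poincar\'e principle; your computations of $h_\lambda$ and of the vacuum specialization check out.
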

\begin{proof}
We apply formula (5.7)   from \cite{AF} for the character formula of $T_{\lambda,0}^\kappa$, with $\kappa=\frac{1}{2}$
and $\frak{g}=\frak{sl}(3)$. For $\lambda=m \omega_1 + n \omega_2$, $m,n \geq 0$, their formula is
$${\rm ch}[T_{\lambda,0}^{\kappa}](q)= \frac{ q^{\tilde{\Delta}_{\lambda,0}^{\kappa}} \sum_{w \in W} (-1)^{\ell(w)} q^{-\langle w(\lambda+\rho),\rho\rangle}}{(q;q)_\infty^2}, $$
where
$$\tilde{\Delta}_{\lambda,0}^{\kappa}=\frac{1}{2 \kappa} (\lambda, \lambda +2 \rho)+(\rho,\rho).$$
Plugging in $\lambda=m \omega_1 + n \omega_2$, and summing over the Weyl group $W$, gives
$${\rm ch}[T_{\lambda,0}^{\kappa}](q)=\frac{q^{\frac23m^2+\frac23n^2+\frac23 mn+m+n}(1-q^{m+1})(1-q^{n+1})(1-q^{m+n+2})}{(q;q)_\infty^2}.$$
Since $T_{\lambda,0}^{\kappa}$ is not necessarily irreducible for $\kappa$ rational (it is always irreducible for
$\kappa \notin \mathbb{Q}$ \cite{AF}) there might be non-trivial maximal submodule in $T_{\lambda,0}^{\kappa}$ so we  conclude
$$\mbox{ch}[L^{W}(-10, \frac23m^2+\frac23n^2+\frac23 mn +m+n, \beta_{m,n})](q) \leq  {\rm ch}[T_{\lambda,0}^{\kappa}](q)$$
as claimed.

\end{proof}

\section{The Vertex algebra $\mathcal{W} (p)_{A_2}$ and its companions }

  In this section we recall the definition of vertex algebras $ \mathcal{W}(p)_{A_2}$ and  $ \mathcal{W}^0(p)_{A_2}$  (cf. \cite{AdM-peking}, \cite{S1} ) which   are   a higher analog of the triplet vertex algebra and singlet vertex algebra  (cf. \cite{A-singlet}, \cite{AdM-singlet} \cite{AdM-triplet}). We shall also recall the result of \cite{A-TG} which identifies $\mathcal{W}^0 (p)_{A_2}$ for $p=2$, as a parafermionic vertex algebra $N_{-3/2}(\frak{sl}(3))$.

In this part we closely follow \cite{A-TG}. We consider the integral lattice
$$ \sqrt{p} A_2
 = {\Z} \gamma_1 + {\Z} \gamma_2, \quad \langle \gamma_1, \gamma_1\rangle = \langle \gamma_2, \gamma_2 \rangle = 2 p,
 \ \langle \gamma_1, \gamma_2 \rangle = - p, $$
 and the  associated lattice vertex algebra $V_{\sqrt{p} A_2}$.
 Let $M_{\gamma_1, \gamma_2} (1)$ be the  Heisenberg vertex subalgebra of
 $V_{ \sqrt{p} A_2}$ generated by the Heisenberg fields $\gamma_1 (z)$ and $\gamma_2 (z)$.
 Let $$\omega_{st} = \frac{1}{3p} ( \gamma_1 (-1) ^2    +  \gamma_1 (-1)  \gamma_2(-1)    +   \gamma_2 (-1) ^ 2 )$$ be the standard Virasoro vector in the lattice vertex algebra $V_{\sqrt{p}A_2}$ of central charge $2$. Define a new conformal vector
 $$\omega = \omega_{st} + \frac{p-1}{p} (  \gamma_1 (-2)   +   \gamma_2  (-2) ).$$
We equip $V_{\sqrt{p}A_2}$ with the conformal structure coming from $\omega$, which has central charge $c_p=2-24 \frac{(p-1) ^{2}}{p}$, e.g. $c_{2}=-10$ for $p=2$.

 The vertex algebra $\mathcal{W}(p)_{A_2}$ is defined (cf. \cite{AdM-peking}, \cite{S1} ) as a subalgebra of the lattice vertex algebra $V_{ \sqrt{p} A_2}$ realized as
$$ \mathcal{W}(p)_{A_2} = \mbox{Ker} _{ V_{ \sqrt{p} A_2} } e^{-\gamma_1 /p } _0 \bigcap
 \mbox{Ker} _{ V_{ \sqrt{p} A_2} } e^{-\gamma_2 /p } _0 . $$
We also have its subalgebra:
$$ \mathcal{W} ^ 0 (p) _{A_2}  = \mbox{Ker} _{ M_{\gamma_1, \gamma_2} (1) } e^{-\gamma_1 /p } _0 \bigcap
 \mbox{Ker} _{ M_{\gamma_1, \gamma_2} (1) } e^{-\gamma_2 /p } _0    $$

   One can also construct the following extension of $\mathcal{W}^0(p)_{A_2}$ which is a higher rank analog of the doublet vertex algebra $\mathcal A(p)$ from \cite{AdM-doublet}.

  Define the lattice $\Gamma = {\Z}\delta_1 + {\Z} \delta_2 \supset \sqrt{p} A_2$ such that
  $$\delta_1 = \frac{1}{3} ( 2\gamma_1 + \gamma_2), \delta_2 = \frac{1}{3} ( \gamma_1 +  2 \gamma_2). $$
  Clearly, $\Gamma = \sqrt{p} P$, where $P$ is a weight lattice of $A_2$.  $V_{\Gamma}$  has the structure of a generalized vertex algebra (cf. \cite{DL})  which contains  the lattice vertex algebra $V_{\sqrt{2} A_2}$.
  Note that $\gamma_1, \gamma_2 $ belongs to the dual lattice of $\Gamma$, so screening operators $ e^{-\gamma_i /p } _0$ are well defined on $V_{\Gamma}$.
  \begin{definition} We define:
  $$ \Omega  (p) _{A_2} = \mbox{Ker} _{ V_{ \Gamma } } e^{-\gamma_1 /p } _0 \bigcap
 \mbox{Ker} _{ V_{\Gamma}  } e^{-\gamma_2 /p } _0 . $$
 \end{definition}
Then  $ \Omega  (p) _{A_2}$ is a  generalized vertex algebra. It is a vertex algebra for $p \equiv 0 \mbox{mod} (3)$. We have the following inclusions:
$$ \mathcal W(2,3)_{c_{p}} \subset \mathcal W^0  (p) _{A_2}   \subset \mathcal W  (p) _{A_2} \subset  \Omega  (p) _{A_2}. $$

 $\mathcal{W}  (p)_{A_2}$ and  $\mathcal W  ^ 0 (p) _{A_2}  $ have  vertex subalgebras isomorphic to the simple
$\mathcal{W}(2,3)$--algebra  with central charge $c_p$   which is  generated by $\omega$ and
\bea  w_{3}=&&  \frac{1}{p ^3} ( \gamma_2(-1) ^ 3 +  \frac{3}{2} \gamma_1( -1) \gamma_2 (-1) ^2  - \frac{3}{2} \gamma_1( -1)^2  \gamma_2 (-1)  - \gamma_1(-1) ^ 3   ) \nonumber  \\
&-&   \frac{9 (p-1) }{4 p ^3} (2 \gamma_1 (-1) \gamma_1 (-2) + \gamma_1 (-2) \gamma_2 (-1) -   \gamma_2 (-2) \gamma_1 (-1) -2 \gamma_2 (-1) \gamma_2 (-2)  )  \nonumber  \\
&+&    \frac{9 (p-1) ^2 }{2 p ^3} (\gamma_2 (-3) - \gamma_1(-3)) \nonumber.
 \eea
The overall normalization of $w_{3}$ is not important. For example, in order to get bracket relations as in  Section  \ref{prel-1}, for $c=-10$, we would have to consider $\frac{4\sqrt{2}}{27}w_{3}$.

  By direct calculation we get:
 \begin{proposition} \label{T} Assume that $\lambda = m\omega_1 + n \omega_2$.  Let $\kappa = \frac{1}{p}$. Let
$v_{m,n} = e^{- m \delta_1 - n \delta_2}$  where
Then $E^{\kappa} [m,n]:=\mathcal W(2,3)_{c}. v_{m,n}$ is a highest weight $\mathcal W(2,3)_{c=-10}$--module with highest weight
  $(h^{(p) } _{m,n} , \beta ^{(p)}_{m,n})$ where
\bea
h^{(p) }  _{m,n} &=& \frac{p}{3}  (m ^2 + n^2 + m n) + (p-1)  (m+n)   \\
 \beta ^{(p)}_{m,n} &=& \frac{(m - n) (-3 + 3 p + 2 m p + n p) (-3 + 3 p + m p + 2 n p)}{2 p^2} .
\eea
 \end{proposition}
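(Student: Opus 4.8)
The plan is to use the inclusion $\mathcal W(2,3)_{c}\subset M_{\gamma_1,\gamma_2}(1)\subset V_{\Gamma}$ together with the fact that $v_{m,n}=e^{-m\delta_1-n\delta_2}$ is a highest weight vector for the rank-two Heisenberg algebra, i.e. $\gamma_i(n)v_{m,n}=0$ for $n>0$ and $\gamma_i(0)v_{m,n}=b_i v_{m,n}$ with $b_i=\langle\gamma_i,-m\delta_1-n\delta_2\rangle$ (well defined since $\gamma_1,\gamma_2$ lie in the dual lattice of $\Gamma$). First I would record, from the Gram matrix of $\sqrt p A_2$, that $\langle\delta_i,\delta_j\rangle$ equals $\tfrac{2p}{3}$ for $i=j$ and $\tfrac p3$ for $i\ne j$, and that $\langle\gamma_i,\delta_j\rangle=p\delta_{ij}$, so that $b_1=-mp$ and $b_2=-np$. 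The Fock module $M_{\gamma_1,\gamma_2}(1)v_{m,n}$ is graded from below by $L(0)$ (equivalently by the standard Virasoro, since $\omega-\omega_{st}=\tfrac{p-1}{p}(\gamma_1(-2)+\gamma_2(-2))$ acts through the central element $\gamma_1(0)+\gamma_2(0)$), with one-dimensional lowest-weight space $\mathbb C v_{m,n}$. Consequently, for every $u\in M_{\gamma_1,\gamma_2}(1)$ of conformal weight $d\ge 1$ one has $u_{(k)}v_{m,n}=0$ for $k\ge d$ and $u_{(d-1)}v_{m,n}=\lambda_u v_{m,n}$ for a scalar $\lambda_u$. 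Taking $u=\omega$ ($d=2$) and $u=w_3$ ($d=3$) then gives $L(k)v_{m,n}=W(k)v_{m,n}=0$ for all $k\ge 1$ and $L(0)v_{m,n}=\lambda_\omega v_{m,n}$, $W(0)v_{m,n}=\lambda_{w_3}v_{m,n}$; hence $E^{\kappa}[m,n]=\mathcal W(2,3)_{c}.v_{m,n}$ is a highest weight module, and the remaining task is to identify $\lambda_\omega=h^{(p)}_{m,n}$ and $\lambda_{w_3}=\beta^{(p)}_{m,n}$.

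The point that makes this identification painless is that, because $\omega$ and $w_3$ are written in terms of strictly negative (hence mutually commuting) Heisenberg modes, the scalar $\lambda_u$ for a monomial $u=\gamma_{i_1}(-n_1)\cdots\gamma_{i_r}(-n_r)\1$ carries no normal-ordering corrections: Wick's theorem shows the $\mathbb C v_{m,n}$-component of $Y(u,z)v_{m,n}$ arises solely by replacing each $\gamma_i(z)$ by its zero-mode part $b_i z^{-1}$, so that $\lambda_u=\prod_j(-1)^{n_j-1}b_{i_j}$, with products of modes going to products of scalars. In other words one just applies the substitution $\gamma_i(-1)\1\mapsto b_i$, $\gamma_i(-2)\1\mapsto -b_i$, $\gamma_i(-3)\1\mapsto b_i$ and multiplies. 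This is the single place where one has to be careful, since a naive approach would look for nontrivial internal contractions; certifying their absence is the only real content beyond bookkeeping.

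Finally I would simply substitute. For $\omega=\omega_{st}+\tfrac{p-1}{p}(\gamma_1(-2)+\gamma_2(-2))$ with $\omega_{st}=\tfrac1{3p}(\gamma_1(-1)^2+\gamma_1(-1)\gamma_2(-1)+\gamma_2(-1)^2)$ one gets $\lambda_\omega=\tfrac1{3p}(b_1^2+b_1b_2+b_2^2)-\tfrac{p-1}{p}(b_1+b_2)=\tfrac p3(m^2+mn+n^2)+(p-1)(m+n)=h^{(p)}_{m,n}$. For $w_3$ I would treat its three homogeneous pieces separately, using for the cubic piece the factorization $-\tfrac12(\gamma_1-\gamma_2)(2\gamma_1+\gamma_2)(\gamma_1+2\gamma_2)$ (equivalently $-\tfrac{27}{2}(\delta_1-\delta_2)\delta_1\delta_2$): the cubic piece contributes $\tfrac1{p^3}\cdot(-\tfrac12)(b_1-b_2)(2b_1+b_2)(b_1+2b_2)=\tfrac12(m-n)(2m+n)(m+2n)$, the $\gamma(-1)\gamma(-2)$ piece contributes $\tfrac{9(p-1)}{2p^3}(b_1^2-b_2^2)=\tfrac{9(p-1)}{2p}(m^2-n^2)$, and the $\gamma(-3)$ piece contributes $\tfrac{9(p-1)^2}{2p^3}(b_2-b_1)=\tfrac{9(p-1)^2}{2p^2}(m-n)$. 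Summing these and extracting the common factor $\tfrac{m-n}{2p^2}$, the remaining bracket factors as $\big(p(2m+n)+3(p-1)\big)\big(p(m+2n)+3(p-1)\big)$, which gives exactly $\beta^{(p)}_{m,n}=\tfrac{(m-n)(-3+3p+2mp+np)(-3+3p+mp+2np)}{2p^2}$, as claimed.
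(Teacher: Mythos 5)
Your proof is correct and is essentially the paper's argument: the paper offers no details beyond ``by direct calculation,'' and your write-up is exactly that calculation, with the key justification (that projecting onto $\mathbb{C}v_{m,n}$ forces every Heisenberg factor to contribute only its zero mode, so the substitution $\gamma_i(-n)\mathbf{1}\mapsto(-1)^{n-1}b_i$ with $b_1=-mp$, $b_2=-np$ involves no contraction corrections) properly certified. All the resulting identities check out, including the factorizations yielding $h^{(p)}_{m,n}$ and $\beta^{(p)}_{m,n}$, which are consistent with Lemma \ref{hw} at $p=2$.
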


 We also have the {\em long} screening operators
 $$ E_1 = e^{\gamma_1}_0, E_2 = e^{\gamma_2}_0$$
 which commutes with  both  $e^{-\gamma_1 /p } _0$ and $e^{-\gamma_2 /p } _0$ \cite{AdM-peking, FT,MP}. Therefore operators $E_1$ and $E_2$ act as derivations on $  \mathcal{W}(p)_{A_2}$. In particular,
 $$ H_1:= E_1 E_2  e^{-\gamma_1 - \gamma_2}, \ \ H_2:= E_2 E_1 e^{-\gamma_1 -\gamma_2}  \in  \mathcal{W}_{A_2} ^ 0 (p) $$
 and they are (non-zero) singular vectors of conformal weight $ 3 p -2$ (e.g. for $p=2$, this weight is $4$).
 
 Note also that for $p=2$:
 $$ [E_1, E_2] =  (e^{\gamma_1}_0 e^{\gamma_2} )_0 =  ( \gamma_1 (-1)  e^{\gamma_1 + \gamma_2} )_0  = - ( \gamma_2(-1)  e^{\gamma_1 + \gamma_2} )_0 . $$

 Let $N_k(\g)$  be the parafermion vertex subalgebra of $L_k (\g)$.

\begin{theorem} \cite{A-TG} For $k=-\frac32$  and $p=2$ we have
$$ N_k (\frak{sl}(3)) = \mathcal{W}^0(p)_{A_2}  .$$
 \end{theorem}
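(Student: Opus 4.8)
The plan is to read off the identification from the explicit free-field realization of $L_{-3/2}(\frak{sl}(3))$ constructed earlier in \cite{A-TG}. That realization embeds $L_{-3/2}(\frak{sl}(3))$, via a vertex algebra homomorphism $\Phi$, into a free-field vertex algebra $\mathcal V$ which contains the lattice vertex algebra $V_{\sqrt 2 A_2}$ — equipped with the modified conformal vector $\omega$ of central charge $-10$ — as a vertex subalgebra, and which is itself presented as a joint kernel of screening operators on a tensor product of $V_{\sqrt 2 A_2}$ with an auxiliary ($\beta\gamma$- or ``$\Pi(0)$''-type) factor. The first step is to locate the image of the affine Cartan subalgebra: I would check that under $\Phi$ the currents $h^{(1)}(z),h^{(2)}(z)$ of $\frak{sl}(3)$ are realized by explicit Heisenberg fields spanning a rank-two Heisenberg $M_{\hat{\frak h}}(1)$ transverse to $M_{\gamma_1,\gamma_2}(1)$, and that $\mathrm{Com}\big(M_{\hat{\frak h}}(1),\mathcal V\big)$ is controlled by $V_{\sqrt 2 A_2}$ — precisely, that the $\Phi$-image of any $\hat{\frak h}$-invariant vector lands inside the zero-charge Heisenberg subalgebra $M_{\gamma_1,\gamma_2}(1)\subset V_{\sqrt 2 A_2}$.

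Granting this, one inclusion is immediate. If $v\in N_{-3/2}(\frak{sl}(3))$, then $v$ is $\hat{\frak h}$-invariant, so by the previous step $\Phi(v)\in M_{\gamma_1,\gamma_2}(1)$; on the other hand $\Phi(v)$ lies in $\Phi(L_{-3/2}(\frak{sl}(3)))$, which is annihilated by the short screenings $e^{-\gamma_1/2}_0,e^{-\gamma_2/2}_0$ (these screenings commute with $M_{\gamma_1,\gamma_2}(1)$ and, in the realization, are among the operators cutting out $\Phi(L_{-3/2}(\frak{sl}(3)))$ from $\mathcal V$). Hence $\Phi(v)\in \mathrm{Ker}_{M_{\gamma_1,\gamma_2}(1)}e^{-\gamma_1/2}_0\cap\mathrm{Ker}_{M_{\gamma_1,\gamma_2}(1)}e^{-\gamma_2/2}_0=\mathcal W^0(2)_{A_2}$, i.e. $N_{-3/2}(\frak{sl}(3))\subseteq\mathcal W^0(2)_{A_2}$. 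For the reverse inclusion I would exhibit generators of $\mathcal W^0(2)_{A_2}$ — the conformal vector $\omega$, the weight-$3$ primary $w_3$ generating the $\mathcal W(2,3)_{c=-10}$ subalgebra, and the weight-$4$ singular vectors $H_1,H_2$ — as elements of $\Phi(L_{-3/2}(\frak{sl}(3)))$, rewritten in terms of the affine generators, and observe that as $M_{\gamma_1,\gamma_2}(1)$-elements they are manifestly $\hat{\frak h}$-invariant; since these vectors generate $\mathcal W^0(2)_{A_2}$, this gives $\mathcal W^0(2)_{A_2}\subseteq N_{-3/2}(\frak{sl}(3))$.

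The step I expect to be the real obstacle is promoting the inclusion to equality, equivalently showing that the coset picks up nothing from $\Phi(L_{-3/2}(\frak{sl}(3)))$ beyond the short-screening kernel in $M_{\gamma_1,\gamma_2}(1)$. I would settle this by a graded-dimension comparison: the character of $\mathcal W^0(2)_{A_2}$ is known from the lattice construction, while the character of $N_{-3/2}(\frak{sl}(3))$ — the generating series of $\hat{\frak h}$-invariants — can be extracted from the character of $L_{-3/2}(\frak{sl}(3))$ by stripping off the rank-two Heisenberg (theta/eta) factor in its $\hat{\frak h}$-weight decomposition; comparing these two $q$-series coefficientwise, together with the inclusion already proved, forces equality in every conformal weight. (Alternatively, one may try to exploit that $L_{-3/2}(\frak{sl}(3))$ is simple, hence so is its parafermion coset, combined with the inclusion $\mathcal W^0(2)_{A_2}\subseteq N_{-3/2}(\frak{sl}(3))$ of vertex subalgebras of equal central charge; but the character computation is the cleaner route.) Everything else is an explicit but routine computation with the free-field formulas.
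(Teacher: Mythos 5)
First, a point of order: the paper you are working from does not actually prove this statement --- the theorem is quoted from \cite{A-TG}, so there is no internal proof to compare against. That said, your sketch is broadly consistent with how the identification is obtained there and used here: the explicit realization of $L_{-3/2}(\frak{sl}(3))$ inside $\mathcal V^{(2)}\otimes F_{-1}$, the orthogonality of the Cartan directions to $\gamma_1,\gamma_2$ (cf.\ (\ref{zero}) and the proof of Proposition \ref{invarijant-1}), and the resulting inclusion of the parafermion coset into the joint kernel of the short screenings on $M_{\gamma_1,\gamma_2}(1)$. Up to the realization-specific assertions you defer (that the coset lands inside $M_{\gamma_1,\gamma_2}(1)$, and that $e^{-\gamma_1/2}_0$, $e^{-\gamma_2/2}_0$ annihilate the image of $L_{-3/2}(\frak{sl}(3))$), the inclusion $N_{-3/2}(\frak{sl}(3))\subseteq\mathcal W^0(2)_{A_2}$ is plausible and in the spirit of \cite{A-TG}.

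The genuine gap is in how you close the argument to an equality: both routes you propose are circular in the present context. The character of $\mathcal W^0(2)_{A_2}$ is \emph{not} ``known from the lattice construction'': computing the graded dimension of $\mbox{Ker}_{M_{\gamma_1,\gamma_2}(1)}e^{-\gamma_1/2}_0\cap\mbox{Ker}_{M_{\gamma_1,\gamma_2}(1)}e^{-\gamma_2/2}_0$ directly from the screening-kernel definition is essentially the Feigin--Tipunin exactness problem for $A_2$, $p=2$ (settled only much later, in \cite{Su}), and the formula quoted in Theorem \ref{BKMZ} is itself \emph{derived from} the identification with $N_{-3/2}(\frak{sl}(3))$, so it cannot be fed back in to prove that identification. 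Likewise, the claim that $\omega$, $w_3$, $H_1$, $H_2$ generate $\mathcal W^0(2)_{A_2}$ is established in this paper as a \emph{consequence} of the theorem (via the decomposition of $\mathcal V^{(2)}_0$ into $L_{-3/2}(2\ell\omega_1)$), not from the definition as a screening kernel; using it for the reverse inclusion is circular. Finally, the parenthetical alternative does not work either: simplicity of the coset together with one inclusion does not force equality, since a simple conformal vertex subalgebra can sit properly inside a larger one with the same conformal vector. To make your strategy rigorous you would need an independent upper bound on ${\rm ch}[\mathcal W^0(2)_{A_2}]$ (e.g.\ from a Felder-type resolution of the relevant Fock modules) or a direct proof that every element of the screening kernel lies in the image of $L_{-3/2}(\frak{sl}(3))$ --- and that is precisely the nontrivial content supplied by \cite{A-TG}.
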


 Now we shall relate $N_k(\frak{sl}(2))$ and $N_k(\frak{sl}(3))$.

 \begin{proposition} For $k=-\frac32$  and $p=2$ we have:
 $$N_k(\frak{sl}(2)) \cong \mathcal{W}^0(p)_{A_2}\cap L_k(\frak{sl}(2))  \subset N_k(\frak{sl}(3)).$$
 Theorefore, $N_k(\frak{sl}(2)) $ is a vertex subalgebra of  $\mathcal{W} ^ 0 (p) _{A_2} $ for $p=2$.
 \end{proposition}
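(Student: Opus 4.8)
The plan is to realize $N_k(\frak{sl}(2))$ inside $\W^0(p)_{A_2}=N_k(\frak{sl}(3))$ --- the last identity being the preceding theorem, for $k=-\tfrac32$ and $p=2$ --- via the root $\frak{sl}(2)$--subalgebra of $\frak{sl}(3)$, and then to compare the two families of defining kernels. First I would fix the embedding: let $\frak{sl}(2)\subset\frak{sl}(3)$ be the subalgebra attached to a simple root $\alpha_1$, with basis $\{e_{\alpha_1},f_{\alpha_1},h_1=\alpha_1^\vee\}$. Since $\frak{sl}(3)$ is simply laced this embedding preserves the normalized invariant form, hence the level, and therefore induces a vertex algebra homomorphism $V^k(\frak{sl}(2))\to V^k(\frak{sl}(3))$; composing with the canonical surjection onto $L_k(\frak{sl}(3))$ gives a homomorphism $\varphi\colon V^k(\frak{sl}(2))\to L_k(\frak{sl}(3))$. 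It is nonzero, since $\varphi(e_{\alpha_1}(-1)\1)$ is a nonzero element of the weight--one subspace $\frak{sl}(3)\subset L_k(\frak{sl}(3))$, hence injective, as $V^k(\frak{sl}(2))=L_k(\frak{sl}(2))$ is simple. Thus $L_k(\frak{sl}(2))$ is identified with the vertex subalgebra of $L_k(\frak{sl}(3))$ generated by $\{x(-1)\1:x\in\frak{sl}(2)\}$.

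The crux is that, apart from $h_1$, the remaining Cartan direction of $\frak{sl}(3)$ acts trivially on this subalgebra. Choose $h'$ spanning the orthogonal complement of $\C h_1$ in the Cartan $\h$ of $\frak{sl}(3)$; concretely $h'=h_1+2h_2$, which satisfies $\alpha_1(h')=0$ and $\langle h',h_1\rangle=0$. Then $h'$ is orthogonal to, and commutes with, $e_{\alpha_1},f_{\alpha_1},h_1$, so $[h'(m),x(n)]=0$ for every $x\in\frak{sl}(2)$ and all $m,n\in\Z$. Since moreover $h'(m)\1=0$ for $m\ge0$, it follows that $h'(m)v=0$ for every $v\in L_k(\frak{sl}(2))$ and every $m\ge0$. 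As $\{h_1,h'\}$ is a basis of $\h$, for $v\in L_k(\frak{sl}(2))$ the requirement that $h(m)v=0$ for all $h\in\h$ and $m\ge0$ is therefore equivalent to $h_1(m)v=0$ for all $m\ge0$.

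The first of these conditions says that $v\in N_k(\frak{sl}(3))$, and the second that $v\in N_k(\frak{sl}(2))$; hence $N_k(\frak{sl}(2))=L_k(\frak{sl}(2))\cap N_k(\frak{sl}(3))=L_k(\frak{sl}(2))\cap\W^0(2)_{A_2}$. Finally, being the intersection of the two vertex subalgebras $L_k(\frak{sl}(2))$ and $\W^0(2)_{A_2}=N_k(\frak{sl}(3))$ of $L_k(\frak{sl}(3))$, $N_k(\frak{sl}(2))$ is itself a vertex subalgebra, and it sits inside $\W^0(2)_{A_2}$.

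I do not expect a genuine obstacle here. The argument rests only on the level preservation of the root embedding (automatic since $\frak{sl}(3)$ is simply laced), the injectivity of $V^k(\frak{sl}(2))=L_k(\frak{sl}(2))\hookrightarrow L_k(\frak{sl}(3))$ (automatic from simplicity of the source, which at $k=-\tfrac32$ is among the standard facts recalled above), and the elementary observation that the Cartan direction orthogonal to $h_1$ annihilates the $\frak{sl}(2)$--subalgebra. The one point deserving care is to perform the intersection inside the same copy of $L_k(\frak{sl}(3))$ used in the identification $N_{-3/2}(\frak{sl}(3))=\W^0(2)_{A_2}$.
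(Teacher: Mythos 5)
Your proof is correct, and it reaches the conclusion by a more intrinsic route than the paper. The paper's own proof works entirely inside the explicit lattice realization of $L_{-3/2}(\frak{sl}(3))$ from \cite{A-TG}: it writes $\gamma_1=-2\alpha$, $\gamma_2=\alpha+\beta-2\delta$ and $h=(-2\beta+\delta)(-1)$ in a lattice $L=\Z\alpha+\Z\beta+\Z\delta$, and the entire argument is the single orthogonality check $(-2\beta+\delta,\gamma_1)=(-2\beta+\delta,\gamma_2)=0$, from which both the inclusion $N_k(\frak{sl}(2))\subset\mathcal{W}^0(2)_{A_2}$ and the identification of the intersection are said to follow. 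You instead stay at the level of the root embedding $\frak{sl}(2)\subset\frak{sl}(3)$ and verify the analogous orthogonality intrinsically: the Cartan direction $h'=h_1+2h_2$ complementary to $h_1$ commutes with, and is orthogonal to, the $\frak{sl}(2)$ triple, so its nonnegative modes annihilate all of $L_k(\frak{sl}(2))$, whence the $\frak{sl}(3)$-parafermion condition and the $\frak{sl}(2)$-parafermion condition agree on that subalgebra. This is the same mechanism in different clothing. What your version buys is independence from the free-field realization --- you only need it to quote $N_k(\frak{sl}(3))=\mathcal{W}^0(2)_{A_2}$ from the preceding theorem --- together with an explicit justification, via simplicity of $V^{-3/2}(\frak{sl}(2))=L_{-3/2}(\frak{sl}(2))$, that $L_k(\frak{sl}(2))$ genuinely embeds into $L_k(\frak{sl}(3))$, a point the paper leaves implicit. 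The one caveat, which you already flag yourself, is that the copy of $\frak{sl}(2)$ must be the one underlying the statement; since the paper's $\frak{sl}(2)$ arises from the regular embedding $\frak{gl}(2)\hookrightarrow\frak{sl}(3)$ of \cite{AKMPP-16}, your root $\frak{sl}(2)$ is the correct one.
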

 \begin{proof}
 In \cite{A-TG}, we  realized $V_{\sqrt{p} A_2}$ inside the lattice vertex algebra $V_L$, where
 $L= {\Z} \alpha + {\Z} \beta + {\Z} \delta$ with scalar products
 $$ \langle  \alpha, \alpha \rangle = - \langle \beta, \beta \rangle = \langle \delta, \delta \rangle = 1$$
 (all other scalar products are zero).

 We used the following realization
 $$ \gamma_1 = - 2 \alpha, \gamma_2 = \alpha+ \beta -2\delta, \ h = ( -2 \beta + \delta )(-1)  \in \frak{sl}(2). $$
 Note that
 \begin{equation} \label{zero}
 (-2\beta + \delta, \gamma_1) = (-2 \beta + \delta, \gamma_2) = 0.
 \end{equation}
 This easily implies  $N_k(\frak{sl}(2)) \subset  \mathcal{W}^0 (p) _{A_2} $, and that
  $$\mathcal{W}^0(p)_{A_2}\cap L_k(\frak{sl}(2))  =  N_k(\frak{sl}(2)).$$
 The proof follows.
 \end{proof}

\begin{proposition} \label{conf-emb}
$N_{-3/2} (\frak{sl}(2))$ is conformally embedded in $\mathcal{W}^0(2)_{A_2}$.
\end{proposition}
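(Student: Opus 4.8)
The plan is to show that the two vertex algebras $N_{-3/2}(\frak{sl}(2))$ and $\mathcal{W}^0(2)_{A_2}$ share the same conformal vector, which is the content of a conformal embedding once we already know (from the previous proposition) that $N_{-3/2}(\frak{sl}(2))$ is a vertex subalgebra of $\mathcal{W}^0(2)_{A_2}$. Recall that the Virasoro element of $\mathcal{W}^0(2)_{A_2}$ is the restriction of the element $\omega = \omega_{st} + \frac{p-1}{p}(\gamma_1(-2) + \gamma_2(-2))$ on $V_{\sqrt{2}A_2}$ with $p=2$, while the conformal vector of the parafermion algebra $N_{-3/2}(\frak{sl}(2))$ is the coset Virasoro element $\omega_{\mathrm{par}} = \omega_{\frak{sl}(2)} - \omega_{\hh}$, where $\omega_{\frak{sl}(2)}$ is the Sugawara vector of $L_{-3/2}(\frak{sl}(2))$ and $\omega_{\hh}$ is the Heisenberg (rank-one) Virasoro element built from $h$.

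The key steps, in order: first, I would transport everything into the big lattice vertex algebra $V_L$, $L = \Z\alpha + \Z\beta + \Z\delta$ with $\langle\alpha,\alpha\rangle = -\langle\beta,\beta\rangle = \langle\delta,\delta\rangle = 1$, using the realization from \cite{A-TG} recalled above: $\gamma_1 = -2\alpha$, $\gamma_2 = \alpha + \beta - 2\delta$, and $h = (-2\beta+\delta)(-1)$. Since $\langle -2\beta+\delta, -2\beta+\delta\rangle = 4(-1)+1 = -3 = 2k$, the Heisenberg coset Virasoro element associated with $h$ is $\omega_{\hh} = \frac{1}{2\cdot(-3)}\, h(-1)^2\mathbf 1 = -\frac16 h(-1)^2\mathbf 1$, expressed in the $\beta,\delta$ Heisenberg generators. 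Second, I would write out $\omega_{\frak{sl}(2)}$, the Sugawara vector of $L_{-3/2}(\frak{sl}(2))$ at level $k=-3/2$ (dual Coxeter number $2$, so $\omega_{\frak{sl}(2)} = \frac{1}{2(k+2)}(\tfrac12 h(-1)^2 + e(-1)f(-1) + f(-1)e(-1))\mathbf 1$ with $k+2 = \tfrac12$), and explicitly use the free-field realization of $L_{-3/2}(\frak{sl}(2))$ inside $V_L$ (again from \cite{A-TG}) to get $\omega_{\mathrm{par}} = \omega_{\frak{sl}(2)} - \omega_{\hh}$ as a concrete element of $V_L$. Third, I would compare this with $\omega|_{p=2} = \frac16(\gamma_1(-1)^2 + \gamma_1(-1)\gamma_2(-1) + \gamma_2(-1)^2) + \frac12(\gamma_1(-2)+\gamma_2(-2))$, rewrite it in the $\alpha,\beta,\delta$ Heisenberg generators via the substitution above, and check it equals $\omega_{\mathrm{par}}$. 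Both have central charge $-10 - (-9) \cdot$... — more precisely, $c_{\mathrm{par}} = c_{\frak{sl}(2)} - c_{\hh}$ where $c_{\frak{sl}(2)} = \frac{3k}{k+2} = \frac{-9/2}{1/2} = -9$ and $c_{\hh} = 1$, giving $c_{\mathrm{par}} = -10$, matching $c_2 = -10$ of $\mathcal{W}^0(2)_{A_2}$; this central-charge check is a useful sanity step but the real content is the identity of the vectors themselves.

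The main obstacle I expect is purely computational: carrying the Sugawara element $e(-1)f(-1)\mathbf 1$ through the explicit bosonic/fermionic-type free-field formulas of \cite{A-TG} for $e, f$ and collecting all the quadratic and the $\gamma_i(-2)$ (i.e.\ $\beta(-2), \delta(-2)$) terms correctly, since the $e,f$ currents are not purely Heisenberg and involve exponentials $e^{\pm\gamma_i}$ whose normally-ordered products contribute the needed pieces. Once the two expressions for the conformal vector are matched inside $V_L$, the conformal embedding follows immediately, since a vertex subalgebra that contains the ambient conformal vector is by definition conformally embedded. An alternative, slightly cleaner route that avoids some of this: use that $N_{-3/2}(\frak{sl}(3)) = \mathcal{W}^0(2)_{A_2}$ is conformally embedded in $L_{-3/2}(\frak{sl}(3))$ (standard for parafermion cosets), that $N_{-3/2}(\frak{sl}(2))$ is conformally embedded in $L_{-3/2}(\frak{sl}(2))$, and combine this with the conformal embedding $\frak{gl}(2)\hookrightarrow \frak{sl}(3)$ at $k=-3/2$ from \cite{AKMPP-16}, together with the previous proposition's inclusion; chasing the coset Virasoro elements through $\omega_{\frak{sl}(3)} = \omega_{\frak{gl}(2)} + (\text{coset})$ then reduces the claim to the already-established facts. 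I would present whichever of these gives the shortest verification, but in either case the essential point is the equality of conformal vectors, and I do not expect any conceptual difficulty beyond the bookkeeping.
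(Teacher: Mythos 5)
Your second ("alternative") route is precisely the paper's proof: the paper invokes the conformal embedding $L_{-3/2}(\frak{gl}(2)) \hookrightarrow L_{-3/2}(\frak{sl}(3))$ from \cite{AKMPP-16}, so the two Sugawara vectors coincide, and since the Cartan subalgebras of $\frak{gl}(2)$ and $\frak{sl}(3)$ have the same rank (they are identified under the embedding), subtracting the common rank-two Heisenberg Virasoro element gives equality of the two coset conformal vectors, i.e.\ $\omega^{par}_{\frak{sl}(2)} = \omega_{\frak{sl}(2)} + \omega_c - (\omega_h + \omega_c) = \omega^{par}_{\frak{sl}(3)}$. Your primary route --- matching $\omega_{\frak{sl}(2)} - \omega_{\hh}$ against $\omega_{st} + \tfrac12(\gamma_1(-2)+\gamma_2(-2))$ by direct computation inside $V_L$ --- should also work (your central-charge bookkeeping $-9-1=-10=c_2$ is correct), but it requires importing the explicit free-field formulas for $e,f$ from \cite{A-TG} and is considerably heavier than the two-line coset argument, which the paper uses to avoid any computation. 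One imprecision to fix if you write up the second route: $\mathcal{W}^0(2)_{A_2} = N_{-3/2}(\frak{sl}(3))$ is \emph{not} conformally embedded in $L_{-3/2}(\frak{sl}(3))$ (the central charges are $-10$ and $-8$); what you need, and what your subsequent sentence actually uses, is only that its conformal vector is the coset Virasoro element $\omega_{\frak{sl}(3)} - \omega_{\hh}$ for the rank-two Cartan Heisenberg.
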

\begin{proof}
We consider conformal embeddings $L_{-3/2} (\frak{gl}(2))$ into $L_{-3/2}(\frak{sl}(3))$ from \cite{AKMPP-16}. Then the Sugawara Virasoro vector from $L_{-3/2}(\frak{sl}(3))$ coincides with Sugawara Virasoro vector of $L_{-3/2} (\frak{gl}(2))$. Since, Cartan subalgebras $\frak{gl}(2)$ and $\frak{sl}(3)$  have the same rank, we conclude that the parafermion algebra $N_{-3/2} (\frak{sl}(2))$ is conformally embedded in $N_{-3/2} (\frak{sl}(3)) =\mathcal W^0(2)_{A_2}$. The proof follows.
\end{proof}

\section{On the  $\frak{sl}(3)$--action on  $\mathcal{W}(p)_{A_2}$ and its applications}
\label{action-sl3-direct}

A geometric proof of the Lie algebra action on $\mathcal W(p)_{Q}$ is given  by S. Sugimoto in  \cite{Su}.   In this section we shall explore this action in the case $Q=A_2$, $p=2$.  We shall prove that $\mathcal W(2)_{A_2}$ is a simple vertex algebra and  by applying quantum Galois theory we shall prove semi-simplicity of  $\mathcal W(2)_{A_2}$ as $\frak{sl}(3) \times \mathcal W(2,3)_{c=-10}$--module. We shall also reconstruct  explicit formulas for $\g=\frak{sl}(3)$--action.

Our method is based on the following facts:

\begin{itemize}
\item [(1)] As in \cite{Su},  the borel subalgebra action is given by  the screening operators $E_i =e^{\gamma_i} _0$, $i=1,2$.

\item[(2)]  We use formulas from \cite{AdM-2010} for operators $F_i$ such that $F_i$, $E_i$ give $\frak{sl}(2)$-actions on $\mathcal{W}(2)_{A_2}$.

\item[(3)] Using the realization from \cite{A-TG}, and identification of $\mathcal{W}(2)_{A_2}$ as a subalgebra of the vacuum space of $L_{-3/2}(\frak{sl}(3))$,  we construct an automorphism $\Psi$ acting on  $\mathcal{W}(2)_{A_2}$ such that $ F_i = - \Psi   E_i  \Psi^{-1}$.

\end{itemize}

In Section \ref{karakterizacije} we will use these explicit formulas to describe $N_{-3/2}(\frak{sl}(2)$ as $\frak{gl}(2)$ invariants of $W(2)_{A_2}$.

 \vskip 5mm
Recall that the vacuum space is defined as  $$\Omega_k(\g) = \{  v \in L_k(\g) \  \vert \  ({\h} \otimes t {\C}[t] ). v = 0 \}. $$
and it is a generalized vertex algebra \cite{DL}. Moreover,
$$ N_k(\g) \subset \Omega_k(\g). $$
By using the  realization from \cite{A-TG}, we have that $\Omega_{-3/2}(\g)$ is a generalized vertex algebra which contains  vertex subalgebra
$  \mathcal{W}(p)_{A_2}$  for $p=2$.

We shall now use results of \cite{AdM-2010} to construct some derivations of $\mathcal{W}(p)_{A_2}$ for $p=2$.
Let $a^1 = e^{-\frac{1}{2} \gamma_1}$,  $a^2 = e^{-\frac{1}{2} \gamma_2}$. For $i=1,2$, we  define  $$F_i  =  \sum_{j \in {\Z}, j \ne 0 } ^{\infty} \frac{1}{j}  :  a^i _{-j} a^{i} _j : ,  \quad F_i ^{tw}  =  \sum_{j \in  \tfrac{1}{2} + {\Z} } ^{\infty} \frac{1}{j}  :  a^i _{-j} a^{i} _j : .$$

\begin{lemma}
For $i=1,2$, we   have:
\begin{itemize}
\item[(1)]  $F_i $ is a derivation on any $F_i$--invariant  vertex subalgebra $V  \subset {\rm Ker} \ a^i _0$ .
\item[(2)]  $F_i$ is a derivation on  $\Omega_{-3/2}  (\frak{sl}(3))$  and   $\mathcal{W}(2)_{A_2}$.
\item[(3)] $E_i, F_i, h_i:= \frac{1}{2} \gamma_i(0)$ generate an $\frak{sl}(2)$--action on $\Omega_{-3/2}  (\frak{sl}(3))$ and  $\mathcal{W}(2)_{A_2}$.
\end{itemize}
\end{lemma}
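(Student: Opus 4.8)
The plan is to reduce the whole statement to the rank-one computations of \cite{AdM-2010}, carried out once for $i=1$ and once for $i=2$. For $p=2$ the field $a^i=e^{-\gamma_i/p}=e^{-\gamma_i/2}$ is exactly the $i$-th short screening current, and $\langle\gamma_i/2,\gamma_i/2\rangle=\tfrac14\langle\gamma_i,\gamma_i\rangle=1$; thus $a^i(z)$ has $\omega_{st}$-conformal weight $\tfrac12$, its self-OPE $Y(a^i,z)Y(a^i,w)$ begins at order $(z-w)^{+1}$ (so $(a^i)_n a^i=0$ for $n\ge 0$), and its modes anticommute, the braiding being governed by the odd integer $1$. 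Resolving the normal ordering accordingly gives $F_i=2\sum_{j>0}\tfrac1j\,a^i_{-j}a^i_j$, a locally finite operator wherever $a^i$ acts with integrally indexed modes. In other words $F_i$ is the ``fermion bilinear'' attached to $a^i$, and $E_i,F_i,h_i$ form the rank-one triplet, doubled over the two simple roots of $A_2$.

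For part (1) I would establish the derivation identity $[F_i,Y(v,z)]=Y(F_iv,z)$ directly from the mode algebra of $a^i$. Expanding $[a^i_{-j}a^i_j,Y(v,z)]$ via the commutator formula for the fermionic field $a^i$ and summing against $\tfrac2j$, the series telescopes: the bulk terms assemble into $Y(F_iv,z)$, and the only surviving boundary contribution is proportional to an expression in which the zero mode $a^i_0$ acts (on $v$, or on an output vector). On a vertex subalgebra $V\subseteq\mathrm{Ker}\,a^i_0$ stable under $F_i$, all such terms vanish, so $F_i|_V$ is a derivation. This cancellation is the one performed in \cite{AdM-2010}, and I would reproduce it for $p=2$.

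For part (2) the extra ingredient is commutation with the Heisenberg algebra: in the realization of \cite{A-TG} one has $\langle-2\beta+\delta,\gamma_i\rangle=0$ by \eqref{zero}, hence $[h(n),a^i_j]=0$ for all $n,j$, so $F_i$ commutes with $\hh\subset\hg$ and therefore preserves the vacuum space $\Omega_{-3/2}(\frak{sl}(3))$; since moreover $F_i$ commutes with the long screenings $E_1,E_2$ that cut $L_{-3/2}(\frak{sl}(3))$ out of $V_L$ (the short/long compatibility of \cite{AdM-peking,FT}), it genuinely acts there. For $\mathcal W(2)_{A_2}=\mathrm{Ker}_{V_{\sqrt2 A_2}}a^1_0\cap\mathrm{Ker}_{V_{\sqrt2 A_2}}a^2_0$ we have $\mathcal W(2)_{A_2}\subseteq\mathrm{Ker}\,a^i_0$, so by part (1) it suffices to check that $F_i$ preserves both kernels: for the matching index $[F_i,a^i_0]=0$ because $(a^i)_n a^i=0$ for $n\ge0$ (the telescoping sum has no boundary term), and for the opposite index one verifies, using the mixed (half-integral) braiding of $a^1$ and $a^2$, that $[F_i,a^j_0]$ carries $\mathrm{Ker}\,a^j_0$ into itself. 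Hence $F_i$ is a derivation of $\mathcal W(2)_{A_2}$.

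For part (3), the relations $[h_i,E_i]=2E_i$ and $[h_i,F_i]=-2F_i$ are immediate from $\gamma_i$-charges ($a^i$ has charge $-p=-2$, $e^{\gamma_i}$ has charge $2p=4$). The content is the third bracket: $[E_i,F_i]=2\sum_{j>0}\tfrac1j\bigl([E_i,a^i_{-j}]a^i_j+a^i_{-j}[E_i,a^i_j]\bigr)$, where $[E_i,a^i_k]$ is read off from $Y(e^{\gamma_i},z)Y(a^i,w)$, whose leading singularity is $(z-w)^{\langle\gamma_i,-\gamma_i/2\rangle}=(z-w)^{-p}=(z-w)^{-2}$ and which returns $a^i$-modes (and $\partial a^i$); the $\tfrac1j$-weighted telescoping sum then collapses to a nonzero rational multiple of the Heisenberg zero mode $\gamma_i(0)$, i.e. a nonzero multiple of $h_i$. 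With all three brackets in hand no further relation can occur, so after the evident rescaling $E_i,F_i,h_i$ span an $\frak{sl}(2)$; each generator is a derivation ($E_i$ by the screening property, $F_i$ by parts (1)--(2), and $h_i=\tfrac12\gamma_i(0)$ as the zero mode of a Heisenberg current), so this $\frak{sl}(2)$ acts by derivations on $\Omega_{-3/2}(\frak{sl}(3))$ and on $\mathcal W(2)_{A_2}$. I expect the $[E_i,F_i]$ computation to be the main obstacle — controlling the $\tfrac1j$-weighted infinite telescoping sum against a double pole while keeping the (fermionic, since $p=2$) signs consistent — with the invariance $F_i(\mathrm{Ker}\,a^j_0)\subseteq\mathrm{Ker}\,a^j_0$ for $j\ne i$ a secondary technical point.
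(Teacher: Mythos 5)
Your overall route is the paper's: part (1) is quoted from \cite{AdM-2010}, part (2) is reduced to part (1) by checking that $F_i$ preserves the relevant kernels of screening operators, and part (3) is deferred to a direct bracket computation (the paper simply cites \cite[Section 4.1]{FFHST} for it). Your rank-one bookkeeping (conformal weight $\tfrac12$ of $a^i$, odd self-braiding, the charges giving $[h_i,E_i]=2E_i$ and $[h_i,F_i]=-2F_i$, the double pole in $Y(e^{\gamma_i},z)Y(a^i,w)$) is all consistent with the setup.

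The one step that does not hold together as you state it is the cross-kernel invariance $F_i(\mathrm{Ker}\,a^j_0)\subseteq\mathrm{Ker}\,a^j_0$ for $j\ne i$, which you call ``a secondary technical point'' but which is in fact the entire content of the paper's proof of (2). First, the condition you propose to verify --- that $[F_i,a^j_0]$ carries $\mathrm{Ker}\,a^j_0$ into itself --- is logically insufficient: for $v\in\mathrm{Ker}\,a^j_0$ it only gives $a^j_0F_iv=-[F_i,a^j_0]v\in\mathrm{Ker}\,a^j_0$, i.e.\ $(a^j_0)^2F_iv=0$, not $a^j_0F_iv=0$. Second, the naive commutator $[F_i,a^j_0]$ is not the right object: since $\langle-\gamma_i/2,-\gamma_j/2\rangle=-\tfrac12$, the operator $a^j_0$ maps the untwisted setting into a \emph{twisted} module for the rank-one subalgebra generated by $a^i$, on which $F_i$ must be replaced by its half-integrally moded counterpart $F_i^{tw}$. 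The paper's actual mechanism is the intertwining relation $a^j_0F_i=F_i^{tw}a^j_0$ on $V$, obtained by combining the twisted commutator formula of \cite{AdM-2010} (which gives $Y(F_iv,z)a^j=[F_i^{tw},Y(v,z)]a^j$), the vanishing $F_i^{tw}a^j=0$, and skew-symmetry to extract the zero mode. If you replace your commutator condition by this relation, the remainder of your argument --- the reduction of (2) to (1) and the collapse of $[E_i,F_i]$ to a nonzero multiple of $h_i$ in (3) --- agrees with the paper.
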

\begin{proof}
Assertion (1) follows directly from \cite{AdM-2010}.

Let $V= \Omega_{-3/2}  (\frak{sl}(3))$ or $V= \mathcal{W}_{A_2}(2)$.
 We claim that on $V$ we have
  \bea
  && a^2 _0 F_1  =  F_1 ^{tw} a^2 _0 \label{com-1} \\
  && a^1 _0 F_2  =  F_2 ^{tw} a^1 _0  \label{com-2}.
  \eea
Then assertion (2) would follow directly from relations (\ref{com-1})-(\ref{com-2}).

It remains to prove these relations. Let us prove (\ref{com-1}).  Let $v \in V$.  Note that $a^2$ belongs to a twisted $V_L$--module, and now \cite{AdM-2010} implies
$$ Y(F_1  v, z) a^2 = [F_1^{tw}, Y(v,z) ] a^2. $$
Since $F_1 ^{tw} a^2 = 0$, we get  $Y(F_1  v, z) a^2 = F_1 ^{tw} Y(v,z) a^2$.
Now  skew-symmetry we get
\bea  a^2 _0 F_1 v   &=& \mbox{Res}_z Y(a^2, z) F_1 v \nonumber \\
&=& \mbox{Res}_ z e^{-z L(-1)}  Y(F_1 v,  - z)  a^2 \nonumber \\
&=&  \mbox{Res}_ z e^{-z L(-1)} F_1 ^{tw}  Y( v,  - z)  a^2 \nonumber \\
&=&   F_1 ^{tw} \mbox{Res}_ z e^{-z L(-1)}   Y( v,  - z)  a^2 \nonumber \\
&=& F_1 ^{tw} a^2 _0 v \nonumber \eea
This proves  (\ref{com-1}).  The proof of  (\ref{com-2}) is analogous.

 The assertion (3) follows from  a  direct calculation  as in \cite[Section 4.1]{FFHST}.

\end{proof}

 Now we can reconstruct the $\frak{sl}(3)$ action on $\mathcal W(2)_{A_2}$ (obtained in \cite{Su} by slightly different methods).
 We skip details.

 \begin{itemize}

\item Let $\Psi_1$ be automorphism of the VOA $L_k(\frak{sl}(3))$ lifted from the automorphism  $$\alpha_1 \mapsto - \alpha_2, \alpha_2 \mapsto - \alpha_1 $$
  of the root lattice $A_2$.

  \item  Since $\Omega_k (\frak{sl}(3))$ is $\Psi$--invariant, we conclude that  $\Psi = \Psi_1 \vert \Omega_{-3/2} (\frak{sl}(3))$ is an automorphism of the
 generalized vertex algebra
 $\Omega_k (\frak{sl}(3))$ for every $k$. In particular, $\Psi$   is an automorphism of $ \Omega_{-3/2} (\frak{sl}(3)$ and of  $\mathcal{W}(2)_{A_2}$.

 \item Using realization, we show that $F_i = \Psi^{-1} E_i \Psi$ and $F_{1,2}= \Psi^{-1} E_{1,2} \Psi$, where
  $E_{1,2} = [E_1, E_2]$ and $F_{1,2} =[F_1, F_2]$.

  \item Since $\Psi (\omega) = \omega$ and $\Psi(w_{3}) =-w_{3}$, we see that all derivations above fix $\mathcal W(2,3)_{c=-10}$.
 \end{itemize}

In this way we get an alternative proof of the Sugimoto theorem for $Q=A_2$, $p=2$:

 \begin{theorem} \label{derivation} \cite{Su} The Lie algebra $ \g=\frak{sl}(3)$ acts on $\Omega_{-3/2} (\frak{sl}(3))$ and on $\mathcal{W}(2)_{A_2}$  by derivations. Moreover,
 $$  \mathcal W(2,3)_{c=-10} \subset  \mathcal{W}(2)_{A_2} ^{\g}. $$
 \end{theorem}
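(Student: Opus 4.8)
The plan is to exhibit explicit commuting derivations realising $\frak{sl}(3)$ and then to check that they annihilate a generating set of $\mathcal{W}(2,3)_{c=-10}$. I would put $e_i:=E_i=e^{\gamma_i}_0$, $f_i:=F_i$ and $h_i:=\tfrac{1}{2}\gamma_i(0)$ for $i=1,2$. By the lemma above --- and since the long screenings $E_i$ and the Heisenberg zero modes $\gamma_i(0)$ are themselves derivations --- all of $e_i,f_i,h_i$ are derivations of $\Omega_{-3/2}(\frak{sl}(3))$ and of $\mathcal{W}(2)_{A_2}$, so the Lie algebra of derivations they generate acts by derivations. It then remains only to recognise this Lie algebra as $\frak{sl}(3)$, i.e.\ to verify the Chevalley--Serre relations of type $A_2$.

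I expect the verification of the relations to go as follows. The Cartan relations $[h_i,h_j]=0$, $[h_i,e_j]=a_{ij}e_j$ and $[h_i,f_j]=-a_{ij}f_j$ are immediate from $\gamma_i(0)$ acting on a state of Heisenberg charge $\beta$ as the scalar $\langle\gamma_i,\beta\rangle$, together with $\tfrac{1}{2}\langle\gamma_i,\gamma_j\rangle=a_{ij}$ (the $A_2$ Cartan matrix) for $p=2$; the relation $[e_i,f_i]=h_i$, up to the normalisation built into the lemma, is exactly the $\frak{sl}(2)$--triple found there. For $i\neq j$ I would get $[e_i,f_j]=0$ from the fact that the operator product of $e^{\gamma_i}$ with $a^j=e^{-\gamma_j/2}$ has no singular part (since $\langle\gamma_i,-\gamma_j/2\rangle=p/2>0$), so $e^{\gamma_i}_0$ commutes with every mode of $a^j$, hence with $F_j$. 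The Serre relations $[e_i,[e_i,e_j]]=0$ are the standard fact that the long screenings attached to the simple roots generate $\frak n_+$ (concretely, using $[E_1,E_2]=(\gamma_1(-1)e^{\gamma_1+\gamma_2})_0$, the operator product of $e^{\gamma_i}$ with $\gamma_i(-1)e^{\gamma_i+\gamma_j}$ has vanishing residue); see \cite{AdM-peking,FT}. The mirror relations $[f_i,[f_i,f_j]]=0$ then follow by conjugating with the automorphism $\Psi$ through $f_i=\Psi^{-1}e_i\Psi$. Since all relations of the \emph{simple} Lie algebra $\frak{sl}(3)$ hold and the $e_i$ are nonzero (e.g.\ $H_1=E_1E_2e^{-\gamma_1-\gamma_2}\neq 0$), the Lie algebra generated is $\frak{sl}(3)$; the same argument applies verbatim on $\Omega_{-3/2}(\frak{sl}(3))$, which settles the first assertion.

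For the inclusion $\mathcal{W}(2,3)_{c=-10}\subseteq\mathcal{W}(2)_{A_2}^{\frak g}$ I would use that $\mathcal{W}(2,3)_{c=-10}$ is generated by $\omega$ and $w_{3}$. Since $\mathrm{Ker}\,D$ is closed under all $n$th products and contains the vacuum whenever $D$ is a derivation, and since the derivations annihilating a fixed subset form a Lie subalgebra, it suffices to show that each of $e_i,f_i,h_i$ kills $\omega$ and $w_{3}$. Here $\omega,w_{3}\in M_{\gamma_1,\gamma_2}(1)$ have zero Heisenberg momentum, so $h_i\omega=h_iw_{3}=0$; and $e^{\gamma_i}$ is a conformal weight one primary field for $\omega$ (it is $\omega_{st}$--primary, and the improvement of the conformal vector shifts its weight from $p$ to $p-(p-1)=1$), so $e^{\gamma_i}_0$ commutes with every $L(n)$. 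Consequently $E_i\omega=E_iL(-2)\mathbf{1}=L(-2)E_i\mathbf{1}=0$, while $E_iw_{3}$ is a Virasoro--primary vector of conformal weight $3$ and Heisenberg charge $\gamma_i\neq0$ inside $\mathcal{W}(2)_{A_2}$; since every nonzero vector of nonzero charge in $\mathcal{W}(2)_{A_2}$ has conformal weight at least $4$, this forces $E_iw_{3}=0$. Finally $f_iw_{3}=\Psi^{-1}E_i\Psi w_{3}=-\Psi^{-1}E_iw_{3}=0$ and $f_i\omega=\Psi^{-1}E_i\omega=0$, using $\Psi\omega=\omega$ and $\Psi w_{3}=-w_{3}$. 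Thus $\omega,w_{3}\in\mathcal{W}(2)_{A_2}^{\frak g}$, and the inclusion follows.

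The step I expect to be the genuine obstacle is the vanishing $E_iw_{3}=0$ --- equivalently, the $\frak{sl}(3)$--invariance of the weight--$3$ generator --- which relies on the lower bound ``conformal weight $\geq4$'' for the charged sectors of $\mathcal{W}(2)_{A_2}$. I would establish this either from the known graded dimensions of $\mathcal{W}(2)_{A_2}$, or, more self-containedly, by a finite linear-algebra computation in the lattice vertex algebra $V_{\sqrt{2}A_2}$ showing that the charge--$\gamma_i$ part of $\mathrm{Ker}\,e^{-\gamma_1/2}_0\cap\mathrm{Ker}\,e^{-\gamma_2/2}_0$ vanishes in conformal weights $\leq3$. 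The remaining non-formal ingredient is the identification $f_i=\Psi^{-1}e_i\Psi$, which comes from the realisation of $\mathcal{W}(2)_{A_2}$ in the vacuum space of $L_{-3/2}(\frak{sl}(3))$ in \cite{A-TG}; with these two facts in hand, everything else is bookkeeping with Heisenberg charges and conformal weights.
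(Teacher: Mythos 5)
Your proposal follows essentially the same route as the paper: the same generators $E_i=e^{\gamma_i}_0$, $F_i$, $h_i=\tfrac12\gamma_i(0)$, the same derivation lemma for the $F_i$, and the same automorphism $\Psi$ with $\Psi(\omega)=\omega$, $\Psi(w_{3})=-w_{3}$; you merely supply the Chevalley--Serre verifications and the vanishing $E_iw_{3}=0$ that the paper delegates to the references \cite{AdM-peking,FT,MP} and to ``we skip details''. One caution: your lower bound ``charged sectors of $\mathcal{W}(2)_{A_2}$ begin in weight $4$'' must indeed be obtained by the direct finite computation in $V_{\sqrt{2}A_2}$ that you sketch (the charge-$\gamma_i$ subspace is $M_{\gamma_1,\gamma_2}(1)\otimes e^{\gamma_i}$, finite-dimensional in each conformal weight, and already $e^{-\gamma_i/2}_0e^{\gamma_i}\neq 0$), and not read off from the decomposition in Theorem \ref{main1}, whose proof depends on the present theorem.
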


 We have the following important consequence:

 \begin{corollary}
 The group $PSL(3,\mathbb{C})$   and the compact Lie group $PSU(3)$ act on $\mathcal{W}(2)_{A_2}$  as  automorphism groups.
The group action commutes with the action of the vertex algebra $\mathcal W(2,3)_{c}$.
 \end{corollary}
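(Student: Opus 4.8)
The plan is to deduce the corollary directly from Theorem \ref{derivation}, which already supplies an action of the Lie algebra $\g = \frak{sl}(3)$ on $\mathcal{W}(2)_{A_2}$ by derivations, together with the commutation of all the derivations involved with $\mathcal W(2,3)_{c=-10}$. First I would exponentiate. The generators $E_i$, $F_i$ ($i=1,2$) are locally nilpotent on $\mathcal W(2)_{A_2}$: indeed, on each conformal weight space (which is finite-dimensional since $\mathcal W(2)_{A_2}$ has finite-dimensional graded pieces) the operators $E_i = e^{\gamma_i}_0$ and $F_i$ raise/lower the $\gamma_i(0)$-eigenvalue by a fixed amount while preserving the conformal weight, so some power of each kills any given homogeneous vector; the Cartan elements $h_i = \frac12\gamma_i(0)$ act semisimply with integer eigenvalues on $\mathcal W(2)_{A_2}$ because this subalgebra sits in the zero-charge part of $V_{\sqrt2 A_2}$. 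Hence each $\exp(t E_i)$ and $\exp(t F_i)$ is a well-defined automorphism of the vertex algebra $\mathcal W(2)_{A_2}$ for all $t \in \mathbb C$ (exponentials of derivations that are locally nilpotent are vertex algebra automorphisms), and these one-parameter subgroups generate a connected group acting by automorphisms. Since the integral eigenvalues of the $h_i$ mean the $\frak{sl}(2)$-triples exponentiate to $SL(2,\mathbb C)$-actions that factor through $PSL(2,\mathbb C)$ on each triple, the resulting connected algebraic group is the adjoint form $PSL(3,\mathbb C)$ rather than $SL(3,\mathbb C)$; this is where identifying the precise group (adjoint, because only the root lattice $A_2$ appears, not the weight lattice) matters.

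Next I would record that the compact real form $PSU(3) \subset PSL(3,\mathbb C)$ acts as a subgroup of automorphisms — this is immediate once $PSL(3,\mathbb C)$ acts, since $PSU(3)$ is a (compact) subgroup. Finally, for the commutation statement: Theorem \ref{derivation} already states $\mathcal W(2,3)_{c=-10} \subset \mathcal W(2)_{A_2}^{\g}$, i.e. every element of the Lie algebra $\g$ annihilates the subalgebra $\mathcal W(2,3)_{c=-10}$ pointwise (concretely $\Psi(\omega)=\omega$, $\Psi(w_3)=-w_3$, and the $E_i,F_i$ kill $\omega$ and $w_3$). Exponentiating, every element of $PSL(3,\mathbb C)$ fixes $\omega$ and $w_3$, hence fixes the whole subalgebra they generate, namely $\mathcal W(2,3)_{c=-10}$. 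Therefore each vertex operator $Y(v,z)$ with $v \in \mathcal W(2,3)_{c=-10}$ is intertwined trivially by the group action, which is exactly the assertion that the $PSL(3,\mathbb C)$- and $PSU(3)$-actions commute with the action of the vertex algebra $\mathcal W(2,3)_{c}$.

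The main obstacle is the local nilpotence / local finiteness needed to exponentiate the Lie algebra action to a group action: one must check that $\mathcal W(2)_{A_2}$ decomposes as a direct sum of finite-dimensional $\g$-submodules (equivalently, that the $\g$-action is locally finite and the Cartan acts with a suitable integral lattice of weights), so that Lie's theorem / the exponential map produces a rational action of the simply connected cover and one can then read off that it descends to the adjoint group $PSL(3,\mathbb C)$. This follows because $\mathcal W(2)_{A_2}$ has finite-dimensional conformal weight spaces each preserved by $\g$, but it is the one point that genuinely requires an argument rather than being a formal consequence of Theorem \ref{derivation}.
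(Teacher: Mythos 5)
Your argument is correct and follows exactly the route the paper intends: the paper states this corollary without proof as a direct consequence of Theorem \ref{derivation}, and your write-up supplies the standard details (local finiteness of the $\frak{sl}(3)$-action on the finite-dimensional conformal weight spaces, integration to the simply connected group, descent to the adjoint form $PSL(3,\mathbb{C})$ because all weights lie in the root lattice $Q$ --- consistent with the decomposition over $P_+\cap Q$ in Proposition \ref{comp-red-galois} --- and commutation with $Y(v,z)$ for $v\in\mathcal W(2,3)_{c=-10}$ since the connected group fixes that subalgebra pointwise). No gaps; the one point you flag as needing care, local finiteness, is indeed the only nontrivial step and you handle it correctly.
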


  We first need the following simplicity result.

\begin{lemma} \label{simple}
\item[(1)] The vertex algebra $\mathcal W  (2) _{A_2}$ is simple.
\item[(2)] The generalized   vertex algebra $\Omega_{-3/2}(\frak{sl}(3))$ is simple.
\end{lemma}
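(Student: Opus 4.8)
The plan is to prove simplicity of $\mathcal{W}(2)_{A_2}$ first, and then deduce simplicity of $\Omega_{-3/2}(\frak{sl}(3))$ from it. The key structural input is the identification, recalled earlier in the paper, that $\mathcal{W}^0(2)_{A_2} = N_{-3/2}(\frak{sl}(3))$, together with the fact that $N_{-3/2}(\frak{sl}(3))$ is a \emph{simple} vertex algebra (being the parafermion coset of the simple affine vertex algebra $L_{-3/2}(\frak{sl}(3))$, whose simplicity we may invoke). So $\mathcal{W}^0(2)_{A_2}$ is simple, and the task is to bootstrap from the simplicity of the subalgebra $\mathcal{W}^0(2)_{A_2}$ to the simplicity of the larger algebra $\mathcal{W}(2)_{A_2}$.

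First I would recall from the discussion in Section~\ref{action-sl3-direct} that $\mathcal{W}(2)_{A_2}$ carries an action of $\frak{sl}(3)$ by derivations, with the Borel part realized through the long screenings $E_1, E_2$ and the lowering operators via $F_1, F_2$. I would use this to show that $\mathcal{W}(2)_{A_2}$ is generated, as a module over $\mathcal{W}^0(2)_{A_2}$ and under the $\frak{sl}(3)$-derivation action, by the identity; more precisely, that $\mathcal{W}(2)_{A_2}$ decomposes under $\frak{sl}(3) \times \mathcal{W}^0(2)_{A_2}$ with $\mathcal{W}^0(2)_{A_2}$ sitting as the $\frak{sl}(3)$-invariants (this is essentially the conformal-embedding/coset picture, and will be made precise in Proposition~\ref{invarijant-1} and Theorem~\ref{derivation}). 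Then, given a nonzero ideal $I \subset \mathcal{W}(2)_{A_2}$, I would argue that $I$ is stable under the $\frak{sl}(3)$-derivations (ideals of a vertex algebra are preserved by any derivation), hence $I$ contains a nonzero $\frak{sl}(3)$-submodule; projecting to the invariants, or rather intersecting with $\mathcal{W}^0(2)_{A_2}$ after applying suitable lowering operators $E_i, F_i$ to push a nonzero vector of $I$ into the charge-zero (weight-zero) subspace, I would obtain a nonzero ideal of the simple algebra $\mathcal{W}^0(2)_{A_2}$. Simplicity of $\mathcal{W}^0(2)_{A_2}$ forces $\1 \in I$, hence $I = \mathcal{W}(2)_{A_2}$.

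The main obstacle I anticipate is the step asserting that a nonzero vector of $I$ can be moved, by the screening/lowering operators, into $\mathcal{W}^0(2)_{A_2}$ without being annihilated — i.e. that one never loses everything when projecting onto the charge-zero component. This requires knowing the $\frak{sl}(3)$-module structure of $\mathcal{W}(2)_{A_2}$ well enough: that every $\frak{sl}(3)$-submodule appearing has a nonzero intersection with the zero-weight space, which holds because the weights occurring all lie in the root lattice $Q$ (as stated, $\mathcal{W}^0(2)_{A_2} = \bigoplus_s N_{-3/2}(2j\omega_1)$ and more precisely $\mathcal{W}(2)_{A_2} = \bigoplus_{\lambda \in P_+ \cap Q} V_{A_2}(\lambda) \otimes T^{\kappa=1/2}_{\lambda,0}$ in Theorem~1.3), so each irreducible $\frak{sl}(3)$-constituent $V_{A_2}(\lambda)$ with $\lambda \in Q$ indeed has a nonzero zero-weight vector. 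One must also be careful that the relevant decomposition is available before simplicity is proven; to avoid circularity I would phrase the argument so that only the coset/invariant statement (which follows from the explicit lattice realization of \cite{A-TG}, not from simplicity) and the simplicity of $N_{-3/2}(\frak{sl}(3))$ are used as inputs.

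For part~(2), I would run the same argument one level up: $\Omega_{-3/2}(\frak{sl}(3))$ is a generalized vertex algebra containing $\mathcal{W}(2)_{A_2}$, and by the lattice realization of \cite{A-TG} it is $\frac{1}{3}Q$-graded with $\mathcal{W}(2)_{A_2}$ the weight-zero (mod $Q$) part, or rather $\Omega_{-3/2}(\frak{sl}(3))$ is built from $\mathcal{W}(2)_{A_2}$ by adjoining the charged pieces $e^{\pm(\gamma_1+\gamma_2)/\text{stuff}}$ permuted by the $\frak{sl}(3)$-action. A nonzero (generalized) ideal of $\Omega_{-3/2}(\frak{sl}(3))$ is again derivation-stable, and by projecting onto the $Q$-component and intersecting with $\mathcal{W}(2)_{A_2}$ I get a nonzero ideal there; part~(1) then finishes it. Alternatively, and perhaps more cleanly, one can invoke that $\Omega_{-3/2}(\frak{sl}(3))$ is the vacuum space of the simple affine vertex algebra $L_{-3/2}(\frak{sl}(3))$, and that vacuum spaces of simple affine vertex algebras are simple as generalized vertex algebras — if such a general statement is available it short-circuits the argument; otherwise the reduction to part~(1) via the grading is the safe route, with the only delicate point being to check that the projection onto the $\mathcal{W}(2)_{A_2}$-component of a nonzero element of the ideal can be arranged to be nonzero, which again follows from the explicit lattice description.
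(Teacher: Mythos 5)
Your route is genuinely different from the paper's, and it has a gap at its central step. The paper does not use the $\frak{sl}(3)$-action at all here: it decomposes $\mathcal W(2)_{A_2}=\bigoplus_{\alpha\in A_2}\mathcal W(2)^{(\alpha)}_{A_2}$ by charge, observes that $\mathcal W(2)^{(0)}_{A_2}=N_{-3/2}(\frak{sl}(3))$ is simple and that each $\mathcal W(2)^{(\alpha)}_{A_2}$ is an \emph{irreducible} $\mathcal W(2)^{(0)}_{A_2}$-module (both facts coming from the realization inside the simple algebra $L_{-3/2}(\frak{sl}(3))$ and the Heisenberg-coset analysis of \cite{CKLR}), and then uses $\mathcal W(2)^{(\alpha)}_{A_2}\cdot\mathcal W(2)^{(\beta)}_{A_2}=\mathcal W(2)^{(\alpha+\beta)}_{A_2}$, which is nonzero again because everything sits inside a simple vertex algebra. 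This keeps the logical order clean: simplicity is established \emph{before} the quantum Galois / $\frak{sl}(3)$-decomposition results, which is exactly the circularity you were worried about.

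The gap in your argument is the assertion that ``ideals of a vertex algebra are preserved by any derivation.'' This is false in general (already for commutative vertex algebras: $\partial_x$ does not preserve the ideal $(x)$ in $\mathbb{C}[x]$), and it is not automatic here because the relevant derivations are \emph{outer}: $E_i=e^{\gamma_i}_0$, the Cartan operators $\tfrac12\gamma_i(0)$, and the $F_i$ are zero modes of fields ($e^{\gamma_i}$, $\gamma_i(-1){\bf 1}$, twisted vertex operators) that do \emph{not} lie in $\mathcal W(2)_{A_2}$, so the ideal property gives you nothing. Without derivation-stability you cannot even conclude that a nonzero ideal is charge-graded, let alone push an element of it into $\mathcal W^0(2)_{A_2}$. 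The rest of your argument (weights in $Q$, hence nonzero zero-weight vectors; simplicity of $\mathcal W^0(2)_{A_2}=N_{-3/2}(\frak{sl}(3))$ finishing the job) is fine once stability is granted. The step can be repaired: since $(\mathcal W(2)_{A_2})_0=\mathbb{C}{\bf 1}$, there is a unique maximal proper ideal $J$, and for any weight-preserving derivation $D$ the subspace $J+D(J)$ is again a proper ideal, so $J$ \emph{is} stable under $E_i,F_i,\gamma_i(0)$; applying your argument to $J$ rather than to an arbitrary ideal then shows $J=0$. But that extra idea is essential and missing as written; alternatively, you could simply replace the derivation argument by the paper's complete-reducibility-over-$\mathcal W^0(2)_{A_2}$ argument, which is what actually makes ideals graded. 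The same comment applies to your treatment of part (2).
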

\begin{proof} We have decomposition of $\mathcal W (2) _{A_2}=\oplus_{\alpha   \in A_2} \mathcal W(2)^{(\alpha )}_{A_2}$ with respect to gradation
in the root lattice $A_2$.
 From \cite{A-TG} we know that $\mathcal W(2) ^{(0)} _{A_2} =N_{-3/2} (\frak{sl}(3))$ is a simple VOA (as the parafermionic algebra of $L_{-3/2}(\frak{sl}_3)$) and
all $\mathcal W (2)^{(\alpha)} _{A_2} $ are also irreducible again, because they are realized as parafermionic modules (see  \cite{CKLR} for detailed analysis of Heisenberg cosets).
Moreover, since   each $\mathcal W(2)   ^{(\alpha)}  _{A_2} $ belongs to a simple VOA  $L_{-3/2}(\frak{sl}_3)$, we conclude that $\mathcal W (2)   ^{(\alpha)} _{A_2}  \cdot \mathcal W   (2) ^{(\beta)}   _{A_2}  \ne 0 $ and therefore
  $$\mathcal W  (2)  ^{(\alpha)}  _{A_2}  \cdot \mathcal W    (2) ^{(\beta)}  _{A_2} =\mathcal W(2) ^{(\alpha + \beta)} _{A_2} . $$ This proves the simplicity of $\mathcal W(2)_{A_2}$.

  The proof of simplicity of $\Omega_{-3/2}$ is analogous. We have $\Omega_{-3/2} (\frak{sl}(3)) =\oplus_{\mu   \in P}  \Omega_{-3/2}^{(\mu )}$  with respect to gradation of the weight lattice of $A_2$. Modules  $\Omega_{-3/2}^{(\mu )}$  are simple $\mathcal W ^{(0)} (2) _{A_2} $--modules since they   are realized inside of $L_{-3/2}(\frak{sl}_3)$ as irreducible modules for the parafermionic algebra. This easily proves that  $\Omega_{-3/2}$ is simple.

\end{proof}

\begin{remark}     A different proof  of simplicity of $\Omega_{-3/2}(\frak{sl}(3))$ can be given using more general  results of \cite{Li}. It is not hard to see that $\mathcal{W}  (2) _{A_2} $ is a $\mathbb{Z}_3$-orbifold of $\Omega_{-3/2} (\frak{sl}(3))$, implying also an alternative proof of simplicity of $\mathcal{W}   (2) _{A_2}$.
\end{remark}

 Now  by applying quantum Galois theory   (cf. \cite{DLM}, \cite{DG}, \cite{DM}) we get:
\begin{proposition} \label{comp-red-galois}
Let $\g = {\frak sl}(3)$.
The vertex algebra  $\mathcal W (2) _{A_2} $  is  a completely   reducible $ \g \times \mathcal W  (2) _{A_2} ^{\g}$--module and
  $$ \mathcal W  (2) _{A_2} = \bigoplus _{n,m \geq 0, \atop n \equiv m  \mod (3) }    V_{A_2} (m \omega_1 +  n \omega_2) \otimes L[m, n] $$
  where $L[m,n]$ is  certain irreducible $\mathcal W  (2) _{A_2} ^{\g}$--module.
\end{proposition}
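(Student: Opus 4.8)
The plan is to deduce this from quantum Galois theory applied to the action of the compact Lie group $G:=PSU(3)$ on $V:=\mathcal W(2)_{A_2}$. The hypotheses are already in place: $V$ is simple by Lemma \ref{simple}(1); it is $\Z_{\ge 0}$-graded by $L(0)$ with finite-dimensional homogeneous subspaces and $V_0=\C\mathbf{1}$ (this follows from the realization of $V$ as a subalgebra of the lattice vertex algebra $V_{\sqrt 2 A_2}$ with the shifted conformal vector $\omega$); and by Theorem \ref{derivation} together with the Corollary following it, $G$ acts on $V$ by vertex algebra automorphisms. Since the derivations $E_i,F_i,h_i$ are homogeneous for $L(0)$ (indeed $\Psi(\omega)=\omega$), this $G$-action preserves the conformal grading, so each $V_n$ is a finite-dimensional continuous $G$-module; and since $G$ is connected, $V^{G}=V^{\frak{sl}(3)}$. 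So the first step is just to record that we are precisely in the setting where the quantum Galois correspondence of \cite{DLM,DG,DM} applies, and that invariants for the group agree with invariants for the Lie algebra.

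Second, I would verify that the $G$-action is faithful. The Lie algebra $\frak{sl}(3)$ acts non-trivially (e.g.\ $E_1=e^{\gamma_1}_0\neq 0$), and since $\frak{sl}(3)$ is simple the kernel of the $PSL(3,\C)$-action is finite; moreover the $A_2$-gradation $V=\bigoplus_{\alpha\in A_2}V^{(\alpha)}$ used in the proof of Lemma \ref{simple} shows every Cartan weight of $V$ lies in the root lattice $Q$, so the centre $\Z/3\Z\subset SL(3,\C)$ acts trivially and the residual faithful action is that of the adjoint group $PSL(3,\C)$, with maximal compact subgroup $G=PSU(3)$. Quantum Galois theory then yields an isotypic decomposition $V=\bigoplus_{\chi}M_\chi\otimes L_\chi$, where $M_\chi$ runs over the irreducible $G$-modules occurring in $V$, each $L_\chi=\mathrm{Hom}_G(M_\chi,V)$ is an irreducible $V^{G}$-module, and $\chi\mapsto L_\chi$ is injective. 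Faithfulness of the action on the simple vertex algebra $V$ forces every irreducible $G$-module to occur: simplicity rules out zero-divisors for $Y(\cdot,z)$, so the $G$-submodules generated by iterated products of a fixed non-trivial isotypic constituent exhaust $\widehat G$ — for $G=PSU(3)$, a sufficiently high tensor power of any non-trivial constituent contains the adjoint $V_{A_2}(\omega_1+\omega_2)$, which tensor-generates $\widehat G$.

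Finally, the irreducible $PSU(3)$-modules are exactly the $V_{A_2}(\lambda)$ with $\lambda\in P_+\cap Q$, i.e.\ $\lambda=m\omega_1+n\omega_2$ with $m,n\ge 0$ and $m\equiv n\pmod 3$ (as $\bar\omega_2=-\bar\omega_1$ generates $P/Q\cong\Z/3\Z$). Setting $L[m,n]:=L_\chi$ for the corresponding $\chi$ and using $V^{G}=V^{\frak{sl}(3)}$ gives the asserted decomposition, with each $L[m,n]$ an irreducible $\mathcal W(2)_{A_2}^{\frak{sl}(3)}$-module. The step I expect to be the main obstacle is the first one: confirming that the compact-group version of quantum Galois theory genuinely applies here — that $V$ is of the required finite/CFT-type and that the exponentiated derivations assemble into an honest continuous automorphism-group action of $PSU(3)$ (which is the content of the Corollary following Theorem \ref{derivation}), together with the clean statement that a faithful action produces every irreducible of $G$ with an irreducible multiplicity space; once that is granted, the identification $\widehat{PSU(3)}\leftrightarrow P_+\cap Q$ and the congruence $m\equiv n\pmod 3$ are routine.
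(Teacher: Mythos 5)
Your proposal is correct and follows essentially the same route as the paper: the paper's proof likewise invokes quantum Galois theory (\cite{DLM}, \cite[Remark 2.3]{DG}) together with Lemma \ref{simple} to obtain a decomposition $\bigoplus_{\lambda\in P'}V_{\frak{sl}(3)}(\lambda)\otimes L_\lambda$ with $P'\supseteq P_+\cap Q$ and irreducible multiplicity spaces, and then uses the definition of the $\g$--action (i.e.\ that all Cartan weights lie in the root lattice) to cut $P'$ down to $P_+\cap Q$. You merely spell out details the paper leaves implicit — faithfulness of the $PSU(3)$--action, the fact that every irreducible of the acting group occurs, and the identification of $\widehat{PSU(3)}$ with $P_+\cap Q$ — so no substantive difference in method.
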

\begin{proof}
Using \cite{DLM}, and more precisely \cite[Remark 2.3]{DG},   and Lemma \ref{simple}, we first get a decomposition of $\mathcal{W} (2) _{A_2}$
as an $\g  \times \mathcal{W} (2) _{A_2} ^{\g}$ -module:
$$\bigoplus_{\lambda \in P'} V_{\frak{sl}(3)}(\lambda) \otimes L_{\lambda}$$
where $P'$ is a set that contains $P^+ \cap Q$ and $L_{\lambda} $ are irreducible $\mathcal{W} (2) _{A_2} ^{\g}$-modules. From the definition of  the $\g$--action on $\mathcal{W} (2) _{A_2}$ we easily see that
any weight in $P'$ is necessarily inside $\lambda \in P^+ \cap Q$. We conclude that $\mathcal{W} (2) _{A_2}$ decomposes as
$$\bigoplus_{\lambda \in P^+ \cap Q} V_{\g}( n \omega_1+m \omega_2  ) \otimes L [m,n].$$
The proof follows
\end{proof}

\section{ $\mathcal W^0(2)_{A_2}$ and $N_{-3/2}(\frak{sl}(2))$ as invariant subalgebras }
\label{karakterizacije}
In this section we shall identify $\mathcal W^0(2)_{A_2}$ as the parafermion subalgebra of the $N=4$ superconformal vertex algebra $L^{N=4}_{c=-9} = \mathcal V^{(2)}$ (cf. \cite{A-TG}). By using identification of $L_{-3/2}(\frak{sl}(2))$ as the $\frak{sl}(2)$ invariant subalgebra of $\mathcal V^{(2)}$ we shall prove that the parafermion algebra $N_{-3/2}(\frak{sl}(2))$ is the $\frak{gl}(2)$--invariant subalgebra of $\mathcal W(2)_{A_2}$.

We shall consider two subalgebras of the Lie algebra $\g=\frak{sl}(3)$ for which acts on $\mathcal W(2)_{A_2}$ by derivations.

Let $ \g_0 = \mbox{span}_{\C} \{ E_1, F_1, h_1 \} \subset \g$  and   $\g_1 = \g_0 + {\C} h_2 \subset \g$. Clearly, $\g_0 \cong \frak{sl}(2)$ and $\g_1 \cong \frak{gl}(2)$.

Note that $\g_0$  acts on the $N=4$ superconformal vertex algebra $\mathcal V^{(2)}$ (cf. \cite{A-TG}, \cite{ACGY})  and
$ L_{-3/2} (\frak{sl}(2)) = \left( \mathcal V^{(2)}   \right)^{\g_0}$.

The vertex superalgebra $\mathcal V^{(2)}$ admits a ${\Z}$--gradation:
 $$ \mathcal V^{(2)} = \bigoplus_{\ell \in {\Z}}  \mathcal V^{(2)} _{\ell}, $$
 such that $  \mathcal V^{(2)} _0 $ is its subalgebra which decomposes as follows:
 \bea  \mathcal V^{(2)} _0  = \bigoplus _{s=0} ^{\infty} L_{\frak{sl}(2)} ( 2 s \omega_1). \label{dec-v0} \eea

 Let $M_h(1)$ is the Heisenberg vertex algebra generated by $h$.

\begin{proposition} \label{invarijant-1} We have:
\begin{itemize}
\item[(1)] $ \mathcal{W}^0(2)_{A_2}  =  Com (M_h(1), W)$, and  the following decomposition holds
 $$  \mathcal{W}^0(2)_{A_2} = \bigoplus _{s=0} ^{\infty} N_{\frak{sl}(2)} ( 2 s). $$
\item[(2)]  $N_{-3/2} (\frak{sl}(2)) \cong \mathcal W(2)^{\g _1}_{A_2}. $
\end{itemize}
\end{proposition}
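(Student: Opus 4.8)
The plan is to derive both statements from the free field realization of $L_{-3/2}(\frak{sl}(3))$ in \cite{A-TG}, the identification $\mathcal W^0(2)_{A_2}=N_{-3/2}(\frak{sl}(3))$, and the $\frak{sl}(3)$--action of Section \ref{action-sl3-direct}. For the first half of (1), I would recall from \cite{A-TG} that the Cartan Heisenberg $\hat{\frak h}_{\frak{sl}(3)}\subset L_{-3/2}(\frak{sl}(3))$ splits as $M_h(1)\otimes M_J(1)$, with $h$ the current appearing in \eqref{zero} and $M_J(1)$ a complementary rank--one Heisenberg, and that the $N=4$ vertex algebra $\mathcal V^{(2)}=L^{N=4}_{c=-9}$ is the coset $\mathrm{Com}(M_J(1),L_{-3/2}(\frak{sl}(3)))$. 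Since $h\perp J$ we have $M_h(1)\subset\mathcal V^{(2)}$, and composing the two Heisenberg cosets gives
\[
\mathcal W^0(2)_{A_2}=N_{-3/2}(\frak{sl}(3))=\mathrm{Com}\bigl(\hat{\frak h}_{\frak{sl}(3)},L_{-3/2}(\frak{sl}(3))\bigr)=\mathrm{Com}\bigl(M_h(1),\mathcal V^{(2)}\bigr),
\]
so that $\mathcal W^0(2)_{A_2}$ is the parafermion ($M_h(1)$--coset) subalgebra of $\mathcal V^{(2)}$, which is the first assertion.

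For the decomposition I would feed the $\mathbb Z$--grading $\mathcal V^{(2)}=\bigoplus_\ell\mathcal V^{(2)}_\ell$ into this coset. Since $M_h(1)\subset\mathcal V^{(2)}_0$ commutes with the grading operator, $\mathrm{Com}(M_h(1),\mathcal V^{(2)})=\bigoplus_\ell\mathrm{Com}(M_h(1),\mathcal V^{(2)}_\ell)$, and the point is that for $\ell\neq 0$ the graded piece $\mathcal V^{(2)}_\ell$ contains no nonzero vector annihilated by all $h(n)$, $n\ge 0$, so the coset is concentrated in $\mathcal V^{(2)}_0$. Combining this with $\mathcal V^{(2)}_0=\bigoplus_{s\ge 0}L_{\frak{sl}(2)}(2s\omega_1)$ (as a module over $L_{-3/2}(\frak{sl}(2))=(\mathcal V^{(2)})^{\frak g_0}$) and the definitional identity $\mathrm{Com}(M_h(1),L_{\frak{sl}(2)}(j\omega_1))=N_{\frak{sl}(2)}(j)$, together with $N^k(j)=N_k(j)$ recalled in the Preliminaries, yields $\mathcal W^0(2)_{A_2}=\bigoplus_{s\ge 0}N_{\frak{sl}(2)}(2s)$.

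For (2) I would use the $\frak g_1\cong\frak{gl}(2)$--action, with $\frak g_0\cong\frak{sl}(2)$ its derived subalgebra and $\frak g_1=\frak g_0+\mathbb C h_2$. A vector $v\in\mathcal W(2)_{A_2}$ is $\frak g_1$--invariant if and only if it is killed by $E_1,F_1,h_1$ and by $h_2$; since the root--lattice grading of $\mathcal W(2)_{A_2}$ is exactly the simultaneous $h_1$-- and $h_2$--eigenspace decomposition, with degree--zero part $\mathcal W^0(2)_{A_2}$ (as in the proof of Lemma \ref{simple}), the conditions $h_1 v=h_2 v=0$ already force $v\in\mathcal W^0(2)_{A_2}$, where the remaining constraint is just $\frak g_0$--invariance. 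Hence $(\mathcal W(2)_{A_2})^{\frak g_1}=(\mathcal W^0(2)_{A_2})^{\frak g_0}$. Using $\mathcal W^0(2)_{A_2}\subset\mathcal V^{(2)}$ from (1), this equals
\[
\mathcal W^0(2)_{A_2}\cap(\mathcal V^{(2)})^{\frak g_0}=\mathcal W^0(2)_{A_2}\cap L_{-3/2}(\frak{sl}(2))=N_{-3/2}(\frak{sl}(2)),
\]
the last equality being the earlier Proposition identifying $N_{-3/2}(\frak{sl}(2))$ with $\mathcal W^0(2)_{A_2}\cap L_{-3/2}(\frak{sl}(2))$. This proves $N_{-3/2}(\frak{sl}(2))\cong(\mathcal W(2)_{A_2})^{\frak g_1}$.

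The step I expect to be the main obstacle is the claim in (1) that only the degree--zero component $\mathcal V^{(2)}_0$ contributes to the $M_h(1)$--coset, equivalently that the higher graded pieces of the $N=4$ algebra carry no $M_h(1)$--singular vectors; proving this cleanly requires the explicit description of the $\mathbb Z$--grading of $\mathcal V^{(2)}$ and its simplicity from \cite{A-TG}, not just the facts quoted above. A minor companion point is to check that the copy of $L_{-3/2}(\frak{sl}(2))$ arising as $(\mathcal V^{(2)})^{\frak g_0}$ is the one whose parafermion coset defines $N_{-3/2}(\frak{sl}(2))$, so that the definitional coset identity and the earlier intersection result apply verbatim.
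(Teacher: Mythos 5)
Your part (2) is essentially sound and close to the paper's own argument: the paper identifies $(\mathcal W^0(2)_{A_2})^{\g_0}$ with $N_{-3/2}(\frak{sl}(2))$ by sandwiching $\mathcal W(2)^{\g_1}_{A_2}$ between $N_{-3/2}(\frak{sl}(2))=(\mathcal U^{(2)})^{\g_0}$ and $\mathcal U^{(2)}={\rm Com}(M_h(1),\mathcal V^{(2)})$, while you use $(\mathcal V^{(2)})^{\g_0}=L_{-3/2}(\frak{sl}(2))$ together with the earlier intersection result; both routes work, modulo the compatibility check you already flagged.

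The genuine gap is in part (1), and it is precisely the step you identified as the main obstacle: the claim that for $\ell\neq 0$ the graded piece $\mathcal V^{(2)}_\ell$ contains no nonzero vector annihilated by all $h(n)$, $n\ge 0$, is false. The operators $E_1,F_1$ commute with every $h(n)$ because of (\ref{zero}), and they shift the $\mathbb{Z}$--grading of $\mathcal V^{(2)}$; applying $E_1$ to a nonzero vector of $N_{-3/2}(2j)\subset\mathcal V^{(2)}_0$ with $j\ge 1$ already produces a nonzero $M_h(1)$--singular vector in a sector with $\ell\neq 0$. In fact the paper's proof of part (2) records ${\rm Com}(M_h(1),\mathcal V^{(2)})=\mathcal U^{(2)}=\bigoplus_{j\ge 0}\rho_{2j}\otimes N_{-3/2}(2j)$, which meets every charge sector and is strictly larger than $\mathcal W^0(2)_{A_2}=\bigoplus_{s}N_{-3/2}(2s)$ (multiplicity $2j+1$ versus $1$). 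Consequently your identity $\mathcal W^0(2)_{A_2}={\rm Com}(M_h(1),\mathcal V^{(2)})$ cannot hold, and the intermediate identification ${\rm Com}(M_J(1),L_{-3/2}(\frak{sl}(3)))=\mathcal V^{(2)}$ is also off: in the realization $L_{-3/2}(\frak{sl}(3))\subset\mathcal V^{(2)}\otimes F_{-1}$ the second Cartan direction is supported on the lattice factor $F_{-1}$ and pairs with the $\mathbb{Z}$--grading, so it is annihilation by \emph{its} zero mode, not by the $h(n)$, that forces $\ell=0$. The statement the paper actually proves is $\mathcal W^0(2)_{A_2}={\rm Com}(M_h(1),\mathcal V^{(2)}_0)$; once one restricts to $\mathcal V^{(2)}_0=\bigoplus_{s}L_{\frak{sl}(2)}(2s\omega_1)$, the decomposition $\bigoplus_s N_{-3/2}(2s)$ follows exactly as you describe. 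The error is consequential rather than cosmetic: your route would produce the character of $\mathcal U^{(2)}$, with multiplicities $2j+1$, contradicting Theorem \ref{BKMZ}.
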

\begin{proof}
In the realization presented in \cite{A-TG}, $L_{-3/2} (\frak{sl}(3))$ is realised as a subalgebra of tensor product $\mathcal V^{(2)}  \otimes F_{-1}$, where
where $F_{-1}$ is a lattice vertex algebra.  Moreover,
$  \mathcal{W}^0(2)_{A_2}  = N_{-3/2} (\frak{sl}(3))$ is exactly  the subalgebra of $\mathcal V^{(2)} _0 $  on which $h(n)$, $n \ge 0$,  act trivially.  Using (\ref{dec-v0}) we get
$$  \mathcal{W}^0(2)_{A_2} = \bigoplus _{s=0} ^{\infty} N_{\frak{sl}(2)} ( 2 s). $$
This proves assertion (1).

Consider next the parafermion  vertex algebra of  $\mathcal V^{(2)}$
$$ \mathcal  U ^{(2)} = \mbox{Com} (M_h(1), \mathcal V^{(2)} ) = \{  v \in \mathcal V^{(2)}   \vert \ h(n) v = 0,  n \ge 0 \} \subset \Omega_{-3/2}. $$
Since the action of $\g_0$ commutes with operators $h(n)$ we conclude that $\g_0$ acts on
 $ \mathcal U^{(2)}  $ and we have the following decomposition of $\mathcal U^{(2)} $ as a $ \g_0 \times N_{-3/2} (\frak{sl}(2))$--module
 $$ \mathcal U^{(2)}  = \bigoplus_{j \in {\Z}_{\ge 0} } \rho_{2j} \otimes N_{-3/2} (2j) $$
 where $\rho_{j } $ denotes the irreducible $j+1$--dimensional $\g_0$--module.
 Moreover, we have
 $$ N_{-3/2} (\frak{sl}(2)) =  \left ( \mathcal U ^{(2)}   \right)^{\g_0} \subset  \mathcal W_{A_2} (2) ^{\g _1} . $$
Since
 $$ \mathcal W (2) ^{\g _1}_{A_2}  \subset  \mathcal W ^0(2) _{A_2}  = \mbox{Com} (M_h(1), \mathcal V^{(2)} _0 ) \subset \mathcal U^{(2)} $$
 we have $N_{-3/2} (\frak{sl}(2)) \cong \mathcal W (2) ^{\g _1}_{A_2}$. The proof follows.
\end{proof}

 \section{The character of the parafermion vertex algebra $N^k (\frak{sl}(2))$ }
\label{char-nk}
 Let  $(x; q)_n = \prod_{i=0} ^{n-1} (1-xq^i)$ and $(q)_n=(q;q)_n$. For $m \leq 0$, let
  $$ \Phi_m (q) = \sum_{r=0} ^{\infty} (-1) ^r q ^{\frac{r (r+1)}{2}+m r},$$
  and for $m>0$, $\Phi_m(q)=\Phi_{-m}(q)$ denote unary false theta functions.
 By an identity of Andrews \cite{Andrews} (see also \cite{B-H,BKMZ}),  the character of $N^k(\frak{sl}(2))$ can be computed as
 $$ {\rm ch}[ N^k (\frak{sl}(2))](q) ={\rm CT}_{x} \frac{1}{(xq;q)_\infty (x^{-1}q;q)_\infty} = \frac{ \Phi_0(q) - \Phi_{-1}(q) }{(q)_\infty^2},$$
 where ${\rm CT}_x$ denotes the constant term inside the range $|q|<|x|<1$.

  We have the following $q$--character for the  vertex algebra  $N^k (\frak{sl}(2))$ for $k=-\frac32$. Although the same formula
  is valid for any generic level, this specific shape is convenient for $k=\frac32$.
  \begin{lemma} \label{2.1} We have:
$$
 {\rm ch}[N^k (\frak{sl}(2))](q) =  \frac{    \sum _{m = 1} ^{\infty} q^{2m (m-1)}  ( 1 - q^m) (1-q^m) (1-q^{2m}) }{(q)_\infty^2}.
  $$
  \end{lemma}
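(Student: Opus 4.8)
The plan is to manipulate the known character formula for $N^k(\frak{sl}(2))$,
\[
{\rm ch}[N^k(\frak{sl}(2))](q) = \frac{\Phi_0(q) - \Phi_{-1}(q)}{(q)_\infty^2},
\]
into the claimed form by rewriting the numerator $\Phi_0(q) - \Phi_{-1}(q)$ as $\sum_{m\ge 1} q^{2m(m-1)}(1-q^m)^2(1-q^{2m})$. So everything reduces to the purely combinatorial identity of false theta functions
\[
\sum_{r=0}^{\infty}(-1)^r q^{\frac{r(r+1)}{2}} - \sum_{r=0}^{\infty}(-1)^r q^{\frac{r(r+1)}{2}-r} = \sum_{m=1}^{\infty} q^{2m(m-1)}(1-q^m)^2(1-q^{2m}),
\]
where I have used $\Phi_{-1}(q)=\Phi_1(q)$ so the second sum is $\sum_r (-1)^r q^{\frac{r(r-1)}{2}}$.

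First I would combine the two sums on the left into a single alternating sum. Writing $\Phi_0 - \Phi_1 = \sum_{r\ge 0}(-1)^r\left(q^{\binom{r+1}{2}} - q^{\binom{r}{2}}\right)$, and noting the $r=0$ term is $q^1 - q^0 = -(1-q)$, one regroups consecutive pairs of indices. The natural move is to pair the exponent $\binom{r}{2}$ coming from the $\Phi_1$-sum with $\binom{r+1}{2}$ coming from the $\Phi_0$-sum and track how the pairs of triangular numbers cluster; the exponents $\binom{r}{2}$ for $r = 2m-1, 2m$ are $2m^2-3m+1$ and $2m^2-m$, which differ by $2m-1$, while the relevant triangular numbers in the other sum sit at $2m(m-1)$ and $2m(m-1)+2m$. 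I would verify directly that, after the sign bookkeeping, the terms with indices in the block $\{2m-1,\dots,2m+1\}$ (or an appropriately shifted block of four consecutive $r$'s) collapse to exactly $q^{2m(m-1)}(1-q^m)^2(1-q^{2m})$. Expanding the right-hand side gives $q^{2m(m-1)}(1 - 2q^m + q^{2m})(1-q^{2m}) = q^{2m^2-2m} - 2q^{2m^2-m} + 2q^{2m^2+m} - q^{2m^2+2m}$ (one term cancels), so I need the left-hand alternating sum to reproduce precisely the four exponents $2m^2-2m,\ 2m^2-m,\ 2m^2+m,\ 2m^2+2m$ with signs $+,-,+,-$ respectively. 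Checking that the triangular-number exponents $\binom{r}{2}$ and $\binom{r+1}{2}$ hit each of these values exactly once as $r$ ranges over $\ganz_{\ge 0}$, with matching sign, is the content of the identity; this is a finite per-block verification once the indexing is pinned down.

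The main obstacle is purely organizational: getting the re-indexing of the alternating sum over $r$ into blocks labeled by $m$ exactly right, so that no exponent is double-counted or dropped and the signs align. A clean way to avoid sign errors is to instead prove the equivalent generating-function identity by a telescoping argument — show that the partial sums of $\sum_{m\le M} q^{2m(m-1)}(1-q^m)^2(1-q^{2m})$ agree with the partial alternating sums of $\Phi_0-\Phi_1$ up to a tail that vanishes $q$-adically as $M\to\infty$ — or to match coefficients of $q^n$ on both sides using the classical fact that every nonnegative integer has at most the prescribed number of representations as a value of $\binom{r}{2}$ in a bounded window. Either route is routine; I would present it as a short direct computation identifying the four exponents above, which is the quickest and leaves nothing to the reader.
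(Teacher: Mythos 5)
Your overall strategy is the same as the paper's: both start from the Andrews constant-term identity ${\rm ch}[N^k(\frak{sl}(2))](q)=(\Phi_0(q)-\Phi_{-1}(q))/(q)_\infty^2$, reduce everything to the numerator identity $\sum_{m\ge1}q^{2m(m-1)}(1-q^m)^2(1-q^{2m})=\Phi_0(q)-\Phi_{-1}(q)$, and check it by expanding each summand into $q^{2m^2-2m}-2q^{2m^2-m}+2q^{2m^2+m}-q^{2m^2+2m}$ and regrouping. However, the primary verification you describe --- that the exponents $\binom{r}{2}$, $\binom{r+1}{2}$ ``hit each of these values exactly once with matching sign'' --- would fail as literally stated, for two reasons. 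First, the outer exponents $2m(m-1)$ and $2m(m+1)$ are in general not triangular numbers at all (for $m=2$ they are $4$ and $12$); they are matched by nothing in $\Phi_0-\Phi_{-1}$ and instead must cancel telescopically between consecutive blocks, since $2m(m+1)=2(m+1)\bigl((m+1)-1\bigr)$ is the leading exponent of the next block, leaving only the boundary term $1$ from $m=1$. Second, the surviving middle exponents $2m^2\mp m$ are the triangular numbers $T_{2m-1}$ and $T_{2m}$ and carry coefficients $\mp2$, and correspondingly each $T_i$ with $i\ge1$ is produced \emph{twice} in your pairing (once as $\binom{r+1}{2}$ with $r=i$ and once as $\binom{r}{2}$ with $r=i+1$, both with sign $(-1)^i$); the correct intermediate identity is $\Phi_0(q)-\Phi_{-1}(q)=1+2\sum_{i\ge1}(-1)^iq^{i(i+1)/2}$, not a sum in which each exponent occurs once. (Also, your $r=0$ term is $q^0-q^0=0$, not $q^1-q^0$.)

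Once these two bookkeeping points are corrected the computation closes, and your fallback suggestion --- proving the identity by telescoping partial sums --- is precisely how the paper organizes it: split each block as $(q^{2m(m-1)}-q^{2m(m+1)})+2(q^{m(2m+1)}-q^{m(2m-1)})$, note that the first group telescopes to $1$ and the second resums to $2\sum_{i\ge1}(-1)^iq^{i(i+1)/2}$, and compare with $\Phi_0-\Phi_{-1}$ as above. So the route is the right one, but the per-block ``each exponent hit exactly once'' check is a genuine misstep that must be replaced by the telescoping/doubling argument.
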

\begin{proof} We have:
   \bea
  && \sum _{m = 1} ^{\infty} q^{2m (m-1)}  ( 1 - q^m) (1-q^m) (1-q^{2m})  \nonumber \\
= &&  \sum _{m = 1} ^{\infty} q^{2m (m-1)}  ( 1  - 2 q^m + 2 q^{3m} - q^{4m} )   \nonumber  \\
= &&   \sum _{m = 1} ^{\infty}  (    q^{2m (m-1)} - 2  q^{m (2 m-1)} +  2  q^{m (2 m+ 1)}  -  q^{2 m (m+1) } )  \nonumber  \\
= &&  \sum _{m = 1} ^{\infty}  (    q^{2m (m-1)}  -  q^{2 m (m+1) } )   +  \sum _{m = 1} ^{\infty}  (     - 2  q^{m (2 m-1)} +  2  q^{m (2 m+ 1)}  )   \nonumber \\
= && 1 - 2 \sum _{m = 1} ^{\infty}  (       q^{m (2 m-1)} -  q^{m (2 m+ 1)}  )  \nonumber \\
= && 1 + 2 \sum _{i  = 1} ^{\infty}     (-1) ^i     q^{ \frac{i (i+1)}{2} }    \nonumber  \\
= && \sum _{i  = 0 } ^{\infty}     (-1) ^i     q^{ \frac{i (i+1)}{2} } -  \sum _{i  = 0 } ^{\infty}     (-1) ^i     q^{ \frac{i (i+1)}{2} -i  }   \nonumber  \\
= && \Phi_0(q) - \Phi _{-1}(q) . \nonumber
\eea
The proof follows.
\end{proof}

\begin{lemma} We have:
\bea
 {\rm ch}[N^k (2 s) ] (q) &= & q^{2s(s+1) } {\rm CT} \frac{\left(x^{-s}+\cdots +1 + \cdots + x^s \right)}{(xq;q)_\infty (x^{-1}q;q)_\infty} \nonumber \\
 &=&  \frac{q^{2s(s+1) }\left( \Phi_{0}(q)+\Phi_{-1}(q)-2\Phi_{-s-1}(q) \right)}{(q;q)_\infty^2 }.
\nonumber
 \eea
\end{lemma}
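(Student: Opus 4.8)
The plan is to establish the two asserted equalities separately: the first (character $=$ constant term) by a Heisenberg–coset computation, and the second (constant term $=$ false theta expression) by a manipulation extending the one in Lemma~\ref{2.1}.

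\emph{Step 1: the constant-term formula.} Write $\chi_s(x)=x^{-s}+\cdots+x^s$ for the character of the $\frak{sl}(2)$--module $V_{\frak{sl}(2)}(2s\omega_1)$, with $x$ recording one half of the $h$--eigenvalue. Since $V^k(2s\omega_1)$ is the generalized Verma module, by PBW it is free over $U\bigl(\frak n_-\otimes t^{-1}{\C}[t^{-1}]\oplus\frak h\otimes t^{-1}{\C}[t^{-1}]\oplus\frak n_+\otimes t^{-1}{\C}[t^{-1}]\bigr)$ on its top $V_{\frak{sl}(2)}(2s\omega_1)$, whose conformal weight for $k=-\frac32$ equals $\frac{(2s\omega_1,2s\omega_1+2\rho)}{2(k+2)}=2s(s+1)$; hence
$${\rm ch}[V^k(2s\omega_1)](x,q)=q^{2s(s+1)}\,\frac{\chi_s(x)}{(q;q)_\infty\,(xq;q)_\infty\,(x^{-1}q;q)_\infty}.$$
Decomposing $V^k(2s\omega_1)$ under the Heisenberg algebra $\hat{\frak h}$ generated by the $h(n)$ (of nonzero level $2k=-3$, with all positive modes locally nilpotent), the $h(0)$--weight-zero summand is $M_h(1)\otimes N^k(2s)$, and with respect to the Sugawara splitting $L(0)=L^{\hat{\frak h}}(0)+L^{\rm para}(0)$ the total $q$--grading on it factors as the product of the two gradings. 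Extracting the coefficient of $x^0$ on both sides of the displayed identity therefore gives $\frac{1}{(q;q)_\infty}{\rm ch}[N^k(2s)](q)=\frac{q^{2s(s+1)}}{(q;q)_\infty}\,{\rm CT}_x\frac{\chi_s(x)}{(xq;q)_\infty(x^{-1}q;q)_\infty}$, which is the first claimed equality.

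\emph{Step 2: evaluating the constant term.} Set $c_h:=[x^h]\bigl((xq;q)_\infty(x^{-1}q;q)_\infty\bigr)^{-1}$, so $c_h=c_{-h}$ and ${\rm CT}_x\frac{\chi_s(x)}{(xq;q)_\infty(x^{-1}q;q)_\infty}=c_0+2\sum_{h=1}^s c_h$. Expanding each factor by the $q$--binomial theorem, $(xq;q)_\infty^{-1}=\sum_{a\ge0}x^aq^a/(q;q)_a$ and $(x^{-1}q;q)_\infty^{-1}=\sum_{b\ge0}x^{-b}q^b/(q;q)_b$, one gets $c_h=\sum_{b\ge0}q^{2b+h}/\bigl((q;q)_{b+h}(q;q)_b\bigr)$ for $h\ge0$. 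The crucial input is the identity
$$(q;q)_\infty^2\,c_h=\Phi_{-h}(q)-\Phi_{-h-1}(q)\qquad(h\ge0),$$
whose case $h=0$ is precisely the Andrews identity already used in Lemma~\ref{2.1} (there $(q;q)_\infty^2 c_0=\Phi_0-\Phi_{-1}$), and whose general case follows by re-running that Durfee-type rearrangement with the $q^h$--shift carried along (cf.\ \cite{Andrews,B-H,BKMZ}). Granting it, the sum telescopes,
$$(q;q)_\infty^2\Bigl(c_0+2\sum_{h=1}^s c_h\Bigr)=(\Phi_0-\Phi_{-1})+2\sum_{h=1}^s(\Phi_{-h}-\Phi_{-h-1})=\Phi_0+\Phi_{-1}-2\Phi_{-s-1},$$
and multiplying through by $q^{2s(s+1)}/(q;q)_\infty^2$ yields the second claimed equality.

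\emph{Main obstacle.} Step~1 is routine vertex-algebra bookkeeping. The real content is the $q$--series identity $(q;q)_\infty^2 c_h=\Phi_{-h}-\Phi_{-h-1}$: once its classical case $h=0$ is granted, one still has to verify that Andrews' argument goes through uniformly in $h$, or equivalently to establish the bilateral expansion $(q;q)_\infty^2\bigl((xq;q)_\infty(x^{-1}q;q)_\infty\bigr)^{-1}=\sum_{h\in{\Z}}\bigl(\Phi_{-|h|}-\Phi_{-|h|-1}\bigr)x^h$, where some care with the annulus $|q|<|x|<1$ of convergence is needed.
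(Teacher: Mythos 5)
Your proof is correct and follows essentially the route the paper intends: the lemma is stated there without proof, but both ingredients you assemble --- the constant-term expression obtained from the PBW character of the (here irreducible) Weyl module $V^k(2s\omega_1)$ with lowest conformal weight $2s(s+1)$ divided by the Heisenberg character, and the coefficient identity ${\rm Coeff}_{x^{h}}\bigl((xq;q)_\infty(x^{-1}q;q)_\infty\bigr)^{-1}=\bigl(\Phi_{-h}(q)-\Phi_{-h-1}(q)\bigr)/(q;q)_\infty^2$ for $h\ge 0$ --- are exactly the facts the paper itself quotes from \cite{Andrews,B-H,BKMZ} in Section \ref{char-nk} and reuses in the proof of the character formula for $\mathcal{W}^0(2)_{A_2}$. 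The telescoping $(\Phi_0-\Phi_{-1})+2\sum_{h=1}^{s}(\Phi_{-h}-\Phi_{-h-1})=\Phi_0+\Phi_{-1}-2\Phi_{-s-1}$ and the symmetry $c_h=c_{-h}$ are used exactly as in the paper's surrounding computations, so your argument supplies the omitted details at the same level of rigor the authors rely on.
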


  The following lemma can be viewed as a  generalization  of Lemma \ref{2.1}.
\begin{proposition} \label{par-char} We let
$$F_{m,n}=q^{\frac23m^2+\frac23n^2+\frac23mn-m-n}(1-q^m)(1-q^n)(1-q^{m+n}).$$
Then for every $s \geq 0$, we have
$$\underbrace{\sum_{m \geq s+1 } F_{m,m}+\sum_{1 \leq i \leq s, m \geq i} (F_{m,m+3(s+1-i)}+F_{m+3(s+1-i),m})}_{:=
G_s(q)}= (q;q)_\infty^2 {\rm ch}[N^k (2 s) ] (q). $$
\end{proposition}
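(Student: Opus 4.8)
The plan is to compute the generating function $G_s(q)$ directly by telescoping, mirroring the computation in Lemma \ref{2.1}, and then match it against the known character formula for $N^k(2s)$ from the preceding lemma. First I would substitute a new summation variable to align the quadratic exponents: writing $m^2+n^2+mn$ in the three summands of $G_s(q)$ shows that $F_{m,m}$, $F_{m,m+3t}$ and $F_{m+3t,m}$ (with $t=s+1-i\ge 1$) all have exponent of the form $2m^2+\text{(linear)}+3t^2$, so after expanding each $(1-q^m)(1-q^n)(1-q^{m+n})$ into eight monomials I would regroup the inner sum over $m$ as a telescoping series in $m$, exactly as the middle lines of the proof of Lemma \ref{2.1}. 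The diagonal piece $\sum_{m\ge s+1}F_{m,m}$ telescopes to a finite sum plus a false-theta-type tail, and each off-diagonal pair $F_{m,m+3t}+F_{m+3t,m}$ contributes a symmetric combination that collapses similarly.

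The key intermediate target is to show
$$G_s(q) = q^{2s(s+1)}\bigl(\Phi_0(q)+\Phi_{-1}(q)-2\Phi_{-s-1}(q)\bigr),$$
since the preceding lemma already identifies the right-hand side with $(q;q)_\infty^2\,{\rm ch}[N^k(2s)](q)$. So the actual work is the identity $G_s(q) = q^{2s(s+1)}\bigl(\Phi_0(q)+\Phi_{-1}(q)-2\Phi_{-s-1}(q)\bigr)$ among explicit $q$-series. I would prove this by the constant-term route: using $(x^{-s}+\cdots+x^s) = \frac{x^{s+1}-x^{-s-1}}{x-x^{-1}}\cdot\frac{1}{1}$ rewritten suitably, observe that ${\rm CT}_x\frac{x^j}{(xq;q)_\infty(x^{-1}q;q)_\infty}$ is, by Andrews' identity already quoted in this section, a shifted false theta function; expanding the symmetric Laurent polynomial $x^{-s}+\cdots+x^s$ and collecting these constant terms reproduces $\Phi_0(q)+\Phi_{-1}(q)-2\Phi_{-s-1}(q)$ up to the overall power $q^{2s(s+1)}$. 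The complementary check is that the combinatorial sum $G_s(q)$ — which is built precisely to encode $\sum_{\lambda=m\omega_1+n\omega_2}{\rm ch}[T_{\lambda,0}^{1/2}](q)$ restricted to the ``charge-$2s$ slice'' via Proposition \ref{T} and Lemma \ref{irr-upper} (the exponent $\frac23m^2+\frac23n^2+\frac23mn-m-n$ being $h^{(2)}_{m,n}$ shifted, and $1-q^{m+1}$ etc.\ matching after reindexing $m\mapsto m-1$) — collapses to the same thing. Concretely I would verify that the substitution $m\mapsto m+1$, $n\mapsto n+1$ turns $F_{m,n}$ into $\pm{\rm ch}[T^{1/2}_{(m-1)\omega_1+(n-1)\omega_2,0}]$-type terms and that the index ranges in $G_s(q)$ are exactly those $(m,n)$ with $n-m\equiv 0\pmod 3$ landing in the charge-$2s$ eigenspace.

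The main obstacle I anticipate is bookkeeping: keeping track of which of the eight monomials in each expanded product survives the telescoping, and correctly handling the off-diagonal pairs $F_{m,m+3t}+F_{m+3t,m}$ for $t=1,\dots,s+1$ where the range of $m$ starts at $i = s+1-t$ rather than at $1$, so the telescoping leaves behind boundary terms that must cancel against the boundary terms of the diagonal sum and against each other. I would organize this by first treating $s=0$ (recovering Lemma \ref{2.1} verbatim as a sanity check), then doing the induction step $G_s - G_{s-1}$, which isolates just the new pair of off-diagonal families and a finite shift in the diagonal sum's starting index, so that the telescoping identity to verify at each step is short. A secondary, milder issue is making sure the identities among $\Phi_m$ used (parity $\Phi_{-m}=\Phi_m$ and the elementary shift identities between $\Phi_0,\Phi_{-1}$ and partial theta sums) are the same ones invoked in Lemma \ref{2.1}, so no new special-function input is needed beyond Andrews' constant-term identity already cited at the start of this section.
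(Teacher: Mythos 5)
Your proposal is correct and follows essentially the same route as the paper: the paper's proof is exactly an induction on $s$ whose base case is Lemma \ref{2.1}, whose induction step rests on the constant-term recursion $(q;q)_\infty^2\,{\rm CT}_x\frac{x^{-s-1}+\cdots+x^{s+1}}{(xq;q)_\infty(x^{-1}q;q)_\infty}=(q;q)_\infty^2\,{\rm CT}_x\frac{x^{-s}+\cdots+x^{s}}{(xq;q)_\infty(x^{-1}q;q)_\infty}+2\bigl(\Phi_{-s-1}(q)-\Phi_{-s-2}(q)\bigr)$, and which then finishes by the same $q$-series telescoping as in Lemma \ref{2.1} --- precisely your plan of reducing to $G_s(q)=q^{2s(s+1)}\bigl(\Phi_0(q)+\Phi_{-1}(q)-2\Phi_{-s-1}(q)\bigr)$ and isolating the new off-diagonal families in $G_s-G_{s-1}$. (Only a cosmetic quibble: since $F_{m,n}=F_{n,m}$, each off-diagonal pair is simply $2F_{m,m+3t}$, and the constant in the exponent is $6t^2$ rather than $3t^2$; neither affects the argument.)
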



\begin{proof}
Follows by direct computation by induction. For $s=0$ this is the statement of Lemma \ref{2.1}.
For the induction step we observe (for $s \geq 0$)
\begin{align*}
& (q;q)_\infty^2 {\rm CT} \frac{\left(x^{-s-1}+\cdots +1 + \cdots + x^{s+1} \right)}{(xq;q)_\infty (x^{-1}q;q)_\infty} \\
&= (q;q)_\infty^2 {\rm CT} \frac{\left(x^{-s}+\cdots +1 + \cdots + x^{s} \right)}{(xq;q)_\infty (x^{-1}q;q)_\infty}+2 (\Phi_{-s-1}(q)-\Phi_{-s-2}(q)).
\end{align*}
The rest follows via manipulation with $q$-series as in Lemma \ref{2.1}.
\end{proof}

\begin{proposition}
$$\sum_{m,n \geq 1, m \equiv n (3)} \min(m,n)q^{\frac23m^2+\frac23n^2+\frac23mn-m-n}(1-q^m)(1-q^n)(1-q^{m+n})= \sum_{s \geq 0} G_{s}.$$
\end{proposition}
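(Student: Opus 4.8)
The plan is to prove the identity as an equality of formal linear combinations of the symbols $F_{m,n}$; this suffices because the exponent $\tfrac23(m^2+n^2+mn)-m-n\ge\tfrac12(m+n)^2-(m+n)\to\infty$, so for each power of $q$ only finitely many $F_{m,n}$ contribute, and (as will be shown) each $F_{m,n}$ occurs with finite multiplicity, so all series below converge in $\C[[q]]$. Substituting the definition of $G_s$ from Proposition \ref{par-char},
\[
\sum_{s\ge 0} G_s \;=\; \underbrace{\sum_{s\ge 0}\ \sum_{m\ge s+1} F_{m,m}}_{(\mathrm{D})}\;+\;\underbrace{\sum_{s\ge 0}\ \sum_{i=1}^{s}\ \sum_{m\ge i}\bigl(F_{m,\,m+3(s+1-i)}+F_{m+3(s+1-i),\,m}\bigr)}_{(\mathrm{O})},
\]
and I will evaluate $(\mathrm{D})$ and $(\mathrm{O})$ separately.

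For the diagonal sum $(\mathrm{D})$, interchanging the order of summation shows that for each fixed $m\ge 1$ exactly the values $s=0,1,\dots,m-1$ occur, whence $(\mathrm{D})=\sum_{m\ge 1} m\,F_{m,m}$, which is precisely the $m=n$ part of the claimed left-hand side. For $(\mathrm{O})$ I will substitute $t=s+1-i$: the constraints $1\le i\le s$ become $s\ge t$ (with $t\ge 1$ automatic), and $m\ge i$ becomes $s\le m+t-1$, so
\[
(\mathrm{O})=\sum_{t\ge 1}\ \sum_{m\ge 1}\ \sum_{s=t}^{m+t-1}\bigl(F_{m,m+3t}+F_{m+3t,m}\bigr)=\sum_{t\ge 1}\sum_{m\ge 1} m\,\bigl(F_{m,m+3t}+F_{m+3t,m}\bigr),
\]
since the inner range $\{t,t+1,\dots,m+t-1\}$ has exactly $m$ elements.

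Finally I recombine: a pair $(a,b)$ with $a\equiv b\pmod 3$ and $a<b$ is uniquely of the form $(m,m+3t)$ with $m=\min(a,b)$ and $3t=b-a$; a pair with $a>b$ is uniquely $(m+3t,m)$ with $m=\min(a,b)$; and the pairs with $a=b$ are accounted for by $(\mathrm{D})$, again with coefficient $\min(a,b)=a$. Hence $\sum_{s\ge 0}G_s=\sum_{a,b\ge 1,\ a\equiv b\,(3)}\min(a,b)\,F_{a,b}$, which is the asserted left-hand side. The one place requiring genuine care is the bookkeeping in $(\mathrm{O})$: one must confirm that for fixed $(m,n)$ with $m\ne n$, $m\equiv n\pmod 3$, the term $F_{m,n}$ is produced by exactly $\min(m,n)$ choices of $s$ and by no other branch (e.g.\ an $F_{m,m+3t}$ with $t\ge 1$ can never coincide with an $F_{m'+3(s+1-i),m'}$), and that the diagonal and off-diagonal contributions do not overlap; once the summation ranges are pinned down as above this is immediate.
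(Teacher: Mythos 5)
Your proof is correct and takes essentially the same route as the paper, which simply observes that every $F_{m,n}$ appears exactly $\min(m,n)$ times as a summand across the $G_s$; your index substitutions and the convergence remark just make that counting explicit.
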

\begin{proof}
Directly from definition of $G_s$ and observation that every $F_{m,n}$ appears exactly $\min(m,n)$-times inside $G_s$ as a summand.

The formula also follows from Proposition \ref{invarijant-1}.
\end{proof}

\color{black}

We also record a $q$-hypergeometric expression for the character.
\begin{proposition} We have
$${\rm ch}[N^k (\frak{sl}(2))](q)=\sum_{n \geq 0} \frac{q^{2n}}{(q)_n^2}.$$
where $(q)_n=\prod_{i \geq 1}^n (1-q^i)$.
\end{proposition}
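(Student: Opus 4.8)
The goal is to prove the $q$-hypergeometric identity
$$\mathrm{ch}[N^k(\mathfrak{sl}(2))](q)=\sum_{n\ge 0}\frac{q^{2n}}{(q)_n^2}.$$
By Lemma~\ref{2.1} (or the Andrews identity recalled just before it), we already know that the left-hand side equals $\frac{\Phi_0(q)-\Phi_{-1}(q)}{(q)_\infty^2}$, and also equals $\mathrm{CT}_x\frac{1}{(xq;q)_\infty(x^{-1}q;q)_\infty}$. So the task reduces to the purely $q$-series identity
$$\sum_{n\ge 0}\frac{q^{2n}}{(q)_n^2}=\mathrm{CT}_x\frac{1}{(xq;q)_\infty(x^{-1}q;q)_\infty}=\frac{\Phi_0(q)-\Phi_{-1}(q)}{(q)_\infty^2}.$$

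The plan is to extract the constant term directly. First I would use the $q$-binomial theorem $\frac{1}{(x^{-1}q;q)_\infty}=\sum_{j\ge 0}\frac{x^{-j}q^j}{(q)_j}$ and $\frac{1}{(xq;q)_\infty}=\sum_{i\ge 0}\frac{x^i q^i}{(q)_i}$, multiply the two series, and collect the terms with $i=j$ (the only ones contributing to the constant term). This yields immediately
$$\mathrm{CT}_x\frac{1}{(xq;q)_\infty(x^{-1}q;q)_\infty}=\sum_{n\ge 0}\frac{q^{2n}}{(q)_n^2},$$
which is exactly the claimed formula. In fact this already finishes the proof, given Lemma~\ref{2.1}; no further false-theta manipulation is needed.

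As an alternative, self-contained route avoiding the constant-term identity, I would instead cite the Durfee-square / Cauchy identity $\sum_{n\ge0}\frac{q^{n^2}t^n}{(q)_n(tq;q)_n}=\frac{1}{(tq;q)_\infty}$, or more directly the classical identity $\sum_{n\ge 0}\frac{q^{2n}}{(q)_n^2}=\frac{1}{(q)_\infty}\sum_{j\in\Z}(-1)^j q^{3j^2/2+j/2}\cdot(\text{something})$ — but this is more cumbersome; the Durfee-square bijection (a partition into parts counted by $\frac{1}{(q)_\infty^2}$ weighted by $x^{(\text{largest part})-(\text{number of parts})}$, whose $x^0$-part is governed by the size of the largest square in the Young diagram, contributing $\frac{q^{2n}}{(q)_n^2}$ for a square of side $n$) gives a clean combinatorial derivation of the same constant-term identity. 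Either way the analytic input is elementary.

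The only subtlety — and it is minor — is bookkeeping with the region of convergence: the constant term $\mathrm{CT}_x$ is taken in the annulus $|q|<|x|<1$, exactly as in the statement before Lemma~\ref{2.1}, so both geometric-type expansions $\frac{1}{(xq;q)_\infty}$ (valid for $|xq|<1$, i.e.\ $|x|<|q|^{-1}$) and $\frac{1}{(x^{-1}q;q)_\infty}$ (valid for $|x^{-1}q|<1$, i.e.\ $|x|>|q|$) converge absolutely there, so term-by-term multiplication and extraction of the $x^0$-coefficient is justified. I expect no real obstacle here; the ``hard part'' is simply invoking the correct prior identity (Andrews' identity / Lemma~\ref{2.1}) so that the $q$-hypergeometric side need only be matched to the constant-term side.
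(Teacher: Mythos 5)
Your proof is correct and is essentially the paper's own argument: the paper likewise expands both factors of the constant-term expression via Euler's identity and matches the diagonal $i=j$ terms to obtain $\sum_{n\ge 0} q^{2n}/(q)_n^2$. The additional remarks on Durfee squares and the annulus of convergence are fine but not needed.
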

\begin{proof} It follows from Euler's identity $$\frac{1}{(x;q)_\infty}=\sum_{n \geq 0} \frac{q^n x^n}{(q)_n}$$
and the fact that ${\rm ch}[N^k (\frak{sl}(2))](q)$ is the constant term of $\frac{1}{(x;q)_\infty (x^{-1};q)_\infty}.$
\end{proof}

Lemma \ref{2.1} and Lemma \ref{irr-upper} suggest the following result whose proof is postponed
for Section
\ref{dec-parafermion-1}.

\begin{theorem} \label{dec-sl2} Let $k =-\frac32$. As a ${\mathcal W(2,3)}_{c=-10}$--module, we have
\bea N_k(\frak{sl}(2)) \cong  \bigoplus_{m=1} ^{\infty}  L^{W}(-10, 2 m (m-1), 0). \label{dec-w23} \eea
\end{theorem}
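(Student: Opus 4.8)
The plan is to combine the character computations of Section \ref{char-nk} with the simplicity and complete reducibility results established earlier, so that the abstract module isomorphism follows from matching characters once we know both sides are direct sums of irreducibles with no multiplicities. First I would observe that by Proposition \ref{invarijant-1}(1) together with Proposition \ref{comp-red-galois}, the vertex algebra $N_{-3/2}(\frak{sl}(2))$ carries a semisimple action of $\mathcal W(2,3)_{c=-10}$: indeed $\mathcal W(2)_{A_2}$ is a completely reducible $\frak{sl}(3)\times \mathcal W(2,3)_{c=-10}$--module, hence so is its subspace $N_{-3/2}(\frak{sl}(2)) = \mathcal W(2)_{A_2}^{\g_1}$ (invariants under a reductive group preserve complete reducibility), and the $\frak{sl}(3)$--isotypic decomposition forces only trivial $\frak{sl}(3)$--types to survive, so $N_{-3/2}(\frak{sl}(2))$ is a direct sum of the modules $L[m,n]$ with $m\equiv n\pmod 3$ that appear with a trivial $\frak{sl}(3)$--component. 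The candidate highest-weight data is read off from Proposition \ref{T} with $p=2$: the module $T^{\kappa=1/2}_{m\omega_1+n\omega_2,0}$ has $L(0)$--weight $h^{(2)}_{m,n} = \frac23(m^2+n^2+mn)+(m+n)$, and when $m=n$ this collapses to $2m(m-1)$ after reindexing, matching the exponents $2m(m-1)$ appearing in Lemma \ref{2.1}.

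Next I would pin down which irreducibles occur and with what multiplicity by a character count. From Lemma \ref{2.1} we have
\[
(q;q)_\infty^2\, {\rm ch}[N^k(\frak{sl}(2))](q) = \sum_{m=1}^\infty q^{2m(m-1)}(1-q^m)(1-q^m)(1-q^{2m}),
\]
and each summand is, up to the overall power of $q$, precisely the character numerator appearing in Lemma \ref{irr-upper} with $m=n$, namely the upper bound for ${\rm ch}[L^W(-10,\,2m(m-1),\,\beta_{m,m})](q)$ (note $\beta_{m,m}=0$ since the factor $(m-n)$ vanishes). So the right-hand side of \eqref{dec-w23} has character $\leq$ (coefficientwise) the character of $N_k(\frak{sl}(2))$, with equality if and only if every one of the inequalities in Lemma \ref{irr-upper} is an equality at the relevant weights, i.e. $T^{\kappa=1/2}_{m\omega_1+m\omega_2,0}$ is irreducible. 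But irreducibility of exactly these modules is Theorem 1.3(3) (the statement that $T^{\kappa=1/2}_{\lambda,0}$ is irreducible for all $\lambda\in P_+\cap Q$), which has been established via the $\frak{sl}(3)$--action and quantum Galois theory. Hence the two $q$-series agree term by term.

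To finish I would argue that coincidence of characters upgrades to an isomorphism of modules. Since $N_k(\frak{sl}(2))$ is a completely reducible $\mathcal W(2,3)_{c=-10}$--module (from the invariant-subalgebra description above) all of whose composition factors are among the $L^W(-10,h,h_W)$, and since the graded pieces are finite-dimensional, the module is determined up to isomorphism by its graded character together with the list of possible irreducible constituents and their characters; the characters $\{{\rm ch}[L^W(-10,2m(m-1),0)]\}_{m\geq 1}$ are linearly independent over $\Z$ (their leading exponents $2m(m-1)$ are strictly increasing), so the multiplicity of each is uniquely extracted from the character of $N_k(\frak{sl}(2))$ and equals $1$ by the computation in the previous paragraph. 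This yields exactly the decomposition \eqref{dec-w23}. Alternatively, and more cleanly, the decomposition is forced directly: by Theorem \ref{derivation} and Proposition \ref{invarijant-1} we have $N_{-3/2}(\frak{sl}(2)) = \bigoplus_{\lambda\in P_+\cap Q} T^{\kappa=1/2}_{\lambda,0}$ restricted to the $\frak{sl}(3)$--invariants, which keeps only $\lambda = m\omega_1+m\omega_2$ — this is essentially Theorem 1.4 — and then irreducibility of each $T^{\kappa=1/2}_{m\omega_1+m\omega_2,0}$ together with the identification of its highest weight from Proposition \ref{T} gives \eqref{dec-w23}.

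The main obstacle I anticipate is the passage from ``equality of characters plus complete reducibility plus a known list of irreducible constituents'' to ``equality of modules'': one must be careful that the $\mathcal W(2,3)$--module structure on $N_{-3/2}(\frak{sl}(2))$ inherited as $\frak{gl}(2)$--invariants in $\mathcal W(2)_{A_2}$ is genuinely semisimple and that the constituents are highest-weight modules of the claimed weights — this is where Proposition \ref{comp-red-galois}, Theorem \ref{derivation}, and Proposition \ref{T} do the real work, and the character identities of Section \ref{char-nk} serve mainly to confirm multiplicities are all $1$ and to tie the combinatorial exponents $2m(m-1)$ to the representation-theoretic highest weights $h^{(2)}_{m,m}$. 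Everything else is bookkeeping with $q$-series that has effectively been done in Lemma \ref{2.1} and Proposition \ref{par-char}.
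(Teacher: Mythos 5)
Your overall strategy is the paper's: identify $N_{-3/2}(\frak{sl}(2))$ with the $\frak{gl}(2)$--invariants $\mathcal W(2)_{A_2}^{\g_1}$ (Proposition \ref{invarijant-1}), feed this into the $\frak{sl}(3)\times\mathcal W(2,3)_{c=-10}$--decomposition of $\mathcal W(2)_{A_2}$ (Proposition \ref{comp-red-galois} and Theorem \ref{main1}), and read off the surviving summands; the character identities of Section \ref{char-nk} then only confirm the bookkeeping. However, there is one genuine gap at the decisive step. What actually determines the answer is the branching multiplicity
$$\dim V_{\frak{sl}(3)}(n\omega_1+m\omega_2)^{\g_1}=\delta_{m,n},$$
which is the paper's Lemma \ref{invarijante-2} (obtained from \cite[Chapter 8]{GW}). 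You assert its conclusion twice (``keeps only $\lambda=m\omega_1+m\omega_2$'') but never justify it, and in your first paragraph you misstate it as ``only trivial $\frak{sl}(3)$--types survive'' --- taken literally that would leave only $L[0,0]=\mathcal W(2,3)_{c=-10}$, contradicting the claimed answer. The point is not triviality of the $\frak{sl}(3)$--type but the dimension of the $\frak{gl}(2)$--fixed subspace inside each $V_{\frak{sl}(3)}(\lambda)$, and that computation is the one nontrivial representation-theoretic input missing from your argument.

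Your fallback, extracting multiplicities from the character of $N_k(\frak{sl}(2))$, does not close this gap as written. The candidate constituents $L^{W}(-10,h_{m,n},\beta_{m,n})$ and $L^{W}(-10,h_{n,m},\beta_{n,m})$ are non-isomorphic modules with identical characters (the character formula of Corollary \ref{irr-WW} is symmetric in $m,n$, while $\beta_{m,n}=-\beta_{n,m}\neq 0$ for $m\neq n$), so the characters of the full family of possible constituents are not linearly independent; your linear-independence observation covers only the diagonal subfamily with leading exponents $2m(m-1)$ and therefore cannot by itself exclude off-diagonal constituents. One could repair this with a positivity argument (non-negative multiplicities force $d_{m,n}+d_{n,m}=0$ for $m\neq n$, once one verifies that the exponents $h_{m,n}$ with $m\le n$ are pairwise distinct), but the clean route is simply to prove the displayed branching formula, after which the decomposition (\ref{dec-w23}) follows exactly as in your final paragraph --- and exactly as in the paper.
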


 \section{ The Character of $\mathcal{W}^0(2)_{A_2}$}

 In this section we discuss two formulas for the character of $\mathcal{W}^0(2)_{A_2}$ from \cite{BKMZ} (see also \cite{BKM}).

Using the realization of $\mathcal{W}(2)_{A_2}$ from \cite{A-TG}, the character of $\mathcal{W}^0 (2) _{A_2}$ can be computed in an elegant form. \begin{theorem} \cite{BKMZ} \label{BKMZ} We have
 \begin{align*}
&  {\rm ch}[\mathcal{W}^0(2)_{A_2}](q) \\
& =\frac{\displaystyle{\sum_{m,n \geq 0 \atop m=n (mod \ 3)} }{\rm min}(m+1,n+1) q^{\frac23m^2+\frac23 mn + \frac23n^2+m+n }(1-q^{m+1})(1-q^{n+1})(1-q^{m+n+2})  }{(q)_\infty^2}.
\end{align*}
 \end{theorem}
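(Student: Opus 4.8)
The plan is to derive this character identity from the realization of $\mathcal{W}(2)_{A_2}$ given in \cite{A-TG} together with Proposition \ref{invarijant-1}(1), which identifies $\mathcal{W}^0(2)_{A_2}$ with the Heisenberg coset of the vacuum space and expresses it as $\bigoplus_{s \geq 0} N_{\frak{sl}(2)}(2s)$. Taking the graded trace of this decomposition immediately gives
$${\rm ch}[\mathcal{W}^0(2)_{A_2}](q) = \sum_{s \geq 0} {\rm ch}[N_k(2s)](q),$$
so the whole identity reduces to summing the known characters ${\rm ch}[N^k(2s)](q)=N_k(2s)$ over $s$. Recall from the lemma in Section \ref{char-nk} that ${\rm ch}[N^k(2s)](q) = (q;q)_\infty^{-2} q^{2s(s+1)}\bigl(\Phi_0(q)+\Phi_{-1}(q)-2\Phi_{-s-1}(q)\bigr)$.

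The first step is to rewrite the target numerator in terms of the $F_{m,n}$ of Proposition \ref{par-char}: after shifting indices $m \mapsto m+1$, $n \mapsto n+1$, the right-hand-side numerator of Theorem \ref{BKMZ} becomes precisely $\sum_{m,n \geq 1,\, m \equiv n\,(3)} \min(m,n)\, F_{m,n}$, where $F_{m,n}=q^{\frac23 m^2+\frac23 n^2+\frac23 mn - m - n}(1-q^m)(1-q^n)(1-q^{m+n})$. The second step invokes the Proposition immediately following Proposition \ref{par-char}, which states exactly that $\sum_{m,n \geq 1,\, m \equiv n\,(3)} \min(m,n)\, F_{m,n} = \sum_{s \geq 0} G_s(q)$. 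The third step invokes Proposition \ref{par-char} itself, $G_s(q) = (q;q)_\infty^2\, {\rm ch}[N^k(2s)](q)$. Chaining these together yields
$$\frac{\sum_{m,n \geq 0,\, m \equiv n\,(3)} \min(m+1,n+1) q^{\frac23 m^2+\frac23 mn+\frac23 n^2+m+n}(1-q^{m+1})(1-q^{n+1})(1-q^{m+n+2})}{(q)_\infty^2} = \sum_{s \geq 0} {\rm ch}[N^k(2s)](q),$$
which equals ${\rm ch}[\mathcal{W}^0(2)_{A_2}](q)$ by Proposition \ref{invarijant-1}(1). Since this is precisely the combinatorial identity already established in the two propositions preceding Proposition \ref{par-char}'s corollary, the proof is essentially a bookkeeping assembly.

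Strictly speaking, the content attributed to \cite{BKMZ} is the \emph{combinatorial} identity among $q$-series; the vertex-algebraic input needed to make it a statement about ${\rm ch}[\mathcal{W}^0(2)_{A_2}]$ is the decomposition $\mathcal{W}^0(2)_{A_2} = \bigoplus_{s \geq 0} N_{\frak{sl}(2)}(2s)$ from Proposition \ref{invarijant-1}, and this is where the realization of \cite{A-TG} enters (through the identification $\mathcal{W}^0(2)_{A_2} = {\rm Com}(M_h(1), \mathcal{V}^{(2)}_0)$ and the $L_{\frak{sl}(2)}$-decomposition \eqref{dec-v0} of $\mathcal{V}^{(2)}_0$). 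The only genuine obstacle, already handled in Proposition \ref{par-char} and its successor, is the inductive $q$-series manipulation showing each $F_{m,n}$ occurs with multiplicity $\min(m,n)$ in $\sum_s G_s$; once that is granted, nothing further is required here, so I would simply cite those propositions and \cite{A-TG} rather than reproduce the computation.
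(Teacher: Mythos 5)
The paper does not actually prove Theorem \ref{BKMZ}; it imports the identity from \cite{BKMZ} (whose derivation is by direct $q$-series/constant-term manipulation on the lattice realization), so there is no internal proof to compare against. Your assembly is correct and gives a self-contained representation-theoretic derivation from the paper's own ingredients: the shift $m\mapsto m+1$, $n\mapsto n+1$ does convert the numerator into $\sum_{m,n\geq 1,\ m\equiv n\,(3)}\min(m,n)F_{m,n}$ (the exponent computation checks out), and chaining the two propositions of Section \ref{char-nk} with the decomposition $\mathcal{W}^0(2)_{A_2}=\bigoplus_{s\geq 0}N_{-3/2}(2s)$ of Proposition \ref{invarijant-1} closes the loop. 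This is exactly the strategy the paper itself uses, in the same section, to give a ``new representation-theoretic proof'' of the \emph{second} (false theta) character formula from \cite{BKMZ}, so your argument is in the intended spirit rather than a foreign one. Two small cautions. First, to avoid circularity you must rely on the direct combinatorial proof of the proposition following Proposition \ref{par-char} (counting that each $F_{m,n}$ occurs exactly $\min(m,n)$ times in $\sum_s G_s$), and not on the paper's side remark that the identity ``also follows from Proposition \ref{invarijant-1}'' --- that alternative route presupposes the very character formula you are proving; your wording indicates you are using the right branch. Second, your proof ultimately rests on the constant-term expression for ${\rm ch}[N^k(2s)]$, which the paper states without proof and which requires the simplicity $V^k(2s\omega_1)=L_k(2s\omega_1)$ at $k=-\tfrac32$ listed among the ``well-known facts''; it would be worth flagging that dependence explicitly. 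What each approach buys: yours makes the theorem internal to the paper at the cost of depending on Proposition \ref{invarijant-1} and the Weyl-module characters, while the citation keeps the $q$-series identity as an independent external input that can then be used to cross-check the representation theory.
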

 Observe that if we sum over the subset $m=n \in \mathbb{N}$ above, we obtain the character of $N^k(\frak{sl}(2))$ as a summand.
   This important observation will be explained in  Section \ref{dec-A2-w3}.

Results from Section \ref{karakterizacije} can be used to give a new representation-theoretic proof
of  the following result from \cite{BKMZ}.
\begin{proposition}
$${\rm ch}[\mathcal{W}^0(2)_{A_2}](q)=\frac{\sum_{n_1 \geq 0, n_2 \in \mathbb{Z}} {\rm sgn}(n_2)(-1)^{n_1} q^{\frac{n_1(n_1+1)}{2}+n_1n_2+2n_2^2+2n_2}}{(q;q)_\infty^2},$$
where ${\rm sgn}(n)=1$, $n \geq 0$ and $-1$ for $n<0$.
\end{proposition}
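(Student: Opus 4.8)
The plan is to start from the character formula for $\mathcal{W}^0(2)_{A_2}$ established in Theorem \ref{BKMZ}, namely the sum over pairs $m,n \geq 0$ with $m \equiv n \pmod 3$ of $\min(m+1,n+1)\, q^{\frac23m^2+\frac23mn+\frac23n^2+m+n}(1-q^{m+1})(1-q^{n+1})(1-q^{m+n+2})$ divided by $(q;q)_\infty^2$, and to transform the numerator into the claimed single-variable-indexed theta-type sum $\sum_{n_1 \geq 0,\, n_2 \in \mathbb{Z}} \mathrm{sgn}(n_2)(-1)^{n_1} q^{\frac{n_1(n_1+1)}{2}+n_1 n_2 + 2n_2^2 + 2n_2}$. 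Since both expressions have the same denominator $(q;q)_\infty^2$, it suffices to prove the identity of the two numerators as formal $q$-series.

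First I would handle the factor $\min(m+1,n+1)$. Writing $\min(m+1,n+1) = \sum_{i=0}^{\min(m,n)} 1$ converts the doubly-indexed sum with the $\min$ weight into a triply-indexed sum without it; this is exactly the bookkeeping already used in Proposition \ref{par-char} and the proposition that follows it, where $F_{m,n}$ appears $\min(m,n)$ times across the $G_s$. So the $\mathcal{W}^0(2)_{A_2}$ numerator equals $\sum_{s \geq 0} G_s(q)$ after the shift $m \mapsto m+1$, $n \mapsto n+1$ (matching the indexing conventions: the $N^k(\mathfrak{sl}(2))$-side uses $q^{\frac23 m^2 + \cdots - m - n}$ while the $\mathcal{W}^0$-side uses $q^{\frac23 m^2 + \cdots + m + n}$, differing precisely by this unit shift). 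Then, by Proposition \ref{par-char}, $\sum_{s \geq 0} G_s(q) = (q;q)_\infty^2 \sum_{s \geq 0} {\rm ch}[N^k(2s)](q)$, and each ${\rm ch}[N^k(2s)](q)$ has the closed form $\frac{q^{2s(s+1)}(\Phi_0(q) + \Phi_{-1}(q) - 2\Phi_{-s-1}(q))}{(q;q)_\infty^2}$ from the lemma preceding Proposition \ref{par-char}. Hence the whole numerator collapses to $\sum_{s \geq 0} q^{2s(s+1)}(\Phi_0(q) + \Phi_{-1}(q) - 2\Phi_{-s-1}(q))$.

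The remaining task is to recognize this sum of false-theta combinations as the stated bilateral sum. Using the defining series $\Phi_m(q) = \sum_{r \geq 0}(-1)^r q^{\frac{r(r+1)}{2} + mr}$, the expression $\Phi_0 + \Phi_{-1} - 2\Phi_{-s-1}$ with the prefactor $q^{2s(s+1)}$ should, after summing over $s$ and reindexing, telescope into a sum over $n_1 \geq 0$ (the index $r$, with sign $(-1)^{n_1}$ and contribution $q^{\frac{n_1(n_1+1)}{2}}$) and $n_2 \in \mathbb{Z}$ (a signed index carrying $q^{2n_2^2 + 2n_2 + n_1 n_2}$, with $\mathrm{sgn}(n_2)$ accounting for the difference between the $\Phi_0 + \Phi_{-1}$ terms and the $-2\Phi_{-s-1}$ terms split by the parameter $s \leftrightarrow |n_2|$). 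Concretely, I expect the $n_2 \geq 0$ part to come from pairing $q^{2s(s+1)} \cdot (-q^{-(s+1)r})$-type contributions and the $n_2 < 0$ part (with opposite sign) from the $\Phi_0 + \Phi_{-1}$ pieces, with a cancellation at $n_2 = 0$ leaving the net sign pattern; the quadratic form $2n_2^2 + 2n_2 + n_1 n_2$ matches $2s(s+1) - (s+1)n_1$ under $n_2 = -(s+1)$ or similar substitutions, which is the main computation to pin down.

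The main obstacle is this last step: correctly matching the sign conventions and the range of $n_2$ (the bilateral index with $\mathrm{sgn}$) against the split of $\Phi_0 + \Phi_{-1} - 2\Phi_{-s-1}$, and verifying that the partial-theta ``tails'' combine without leftover terms. This is a purely $q$-series manipulation of the same flavor as Lemma \ref{2.1} and Proposition \ref{par-char} — reindexing geometric-type sums and exploiting the relation $\Phi_m = \Phi_{-m}$ — but it requires care at the boundary $n_2 = 0$ and in tracking which half of $\{n_2 \in \mathbb{Z}\}$ carries which sign. Once the reindexing is set up, the identity follows by comparing coefficients of $q^N$ on both sides, which is routine.
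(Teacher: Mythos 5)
Your overall strategy is sound and, after the first reduction, converges with the paper's: both routes arrive at
$$(q;q)_\infty^2\,{\rm ch}[\mathcal{W}^0(2)_{A_2}](q)=\sum_{s\ge 0}q^{2s(s+1)}\bigl(\Phi_0(q)+\Phi_{-1}(q)-2\Phi_{-s-1}(q)\bigr),$$
you via Theorem \ref{BKMZ} and Proposition \ref{par-char}, the paper directly from the decomposition $\mathcal{W}^0(2)_{A_2}=\bigoplus_{s\ge 0}N_{-3/2}(2s)$ of Proposition \ref{invarijant-1} followed by constant-term extraction. Be aware, though, that the stated purpose of this proposition is a \emph{new representation-theoretic} proof of a formula from \cite{BKMZ}; starting from Theorem \ref{BKMZ}, which is itself imported from \cite{BKMZ}, is logically admissible for the identity as such but undercuts that purpose, and Proposition \ref{invarijant-1} gives you $\sum_{s\ge 0}{\rm ch}[N^k(2s)](q)$ in one step without the detour through the $\min(m+1,n+1)$ sum.

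The genuine gap is in the last step, which you yourself flag as ``the main computation to pin down'' and do not carry out --- and your sketch of it assigns the terms incorrectly. Substituting $n_2=-(s+1)$ shows that the $n_2<0$ half of the bilateral sum equals $-\sum_{s\ge 0}q^{2s(s+1)}\Phi_{-s-1}(q)$, i.e.\ it absorbs exactly \emph{one} of the two copies of $-\Phi_{-s-1}$; the $n_2\ge 0$ half must then account for $\sum_{s\ge 0}q^{2s(s+1)}\bigl(\Phi_0+\Phi_{-1}-\Phi_{-s-1}\bigr)$, and matching it requires the partial-theta functional equation
$$\sum_{r\ge 0}(-1)^r q^{\frac{r(r+1)}{2}+mr}=1-\Phi_{-m-1}(q)\qquad(m\ge 0),$$
(proved by pairing $r\leftrightarrow -1-2m-r$ in the vanishing bilateral sum), together with its special case $\Phi_0+\Phi_{-1}=1$. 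Your proposal invokes only the definitional convention $\Phi_m=\Phi_{-m}$ and guesses that the $n_2\ge 0$ part comes from the $-2\Phi_{-s-1}$ contributions while the $n_2<0$ part comes from $\Phi_0+\Phi_{-1}$; this is backwards, since the exponent $\frac{n_1(n_1+1)}{2}+n_1n_2$ with $n_2\ge 0$ cannot literally match $\frac{n_1(n_1+1)}{2}-(s+1)n_1$ without the transformation above. Until that identity is identified and applied, the two sides cannot be matched term by term, so the argument is incomplete precisely at its decisive point.
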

\begin{proof}
Using Proposition \ref{invarijant-1} we get:
$$ \mathcal W^0 (2)_{A_2} = \bigoplus_{s=0} ^{\infty} N_{-3/2} (2 s).$$
Applying the character formulas for the relevant $\widehat{\frak{sl}(2)}$-modules, we can write
\begin{equation*}
{\rm ch}[\mathcal{W}_{A_2}^0(2)](q)={\rm CT}_x \frac{\sum_{s \geq 0} \frac{x^{s+1/2}-x^{-s-1/2}}{x^{1/2}-x^{-1/2}} q^{2s(s+1)}}{(xq;q)_\infty (x^{-1}q;q)_\infty}.
\end{equation*}
The rest follows simply by extracting the constant term, using ($m \in \mathbb{Z}$)
$${\rm Coeff}_{x^{m}}  \sum_{s \geq 0} \frac{x^{s+1/2}-x^{-s-1/2}}{x^{1/2}-x^{-1/2}} q^{2s(s+1)}=\sum_{s \geq |m|} q^{2s(s+1)} $$
and ($m \geq 0$)
$${\rm Coeff}_{x^{m}} \frac{1}{(xq;q)_\infty (x^{-1}q;q)_\infty}=\frac{1}{(q;q)^2_\infty} \left(\Phi_{-m}(q)-\Phi_{-m-1}(q)\right),$$
discussed in Section \ref{char-nk}. Finally, we have to split the numerator in the character formula as
$$\sum_{n_1, n_2 \geq 0} (-1)^{n_1} q^{\frac{n_1(n_1+1)}{2}+n_1n_2+2n_2^2+2n_2}
 - \sum_{n_1, n_2 \geq 0} (-1)^{n_1} q^{\frac{n_1(n_1+1)}{2}-n_1(n_2+1)+2n^2+2n_2}.$$
\end{proof}

 \section{The decomposition of $\mathcal W(2)_{A_2} $ as $\mathcal{W}(2,3)_c$--module}

 \label{dec-A2-w3}

We also require next computational lemma.
\begin{lemma} \label{hw} For every $m,n \geq 0$ such that  $m \omega_1 + n \omega_2 \in P^+ \cap Q$,  $v_{m,n}=e^{-m \omega_1-n \omega_2}   \in \mathcal{W}(2)_{A_2}$. Also,
$$L(0) \cdot v_{m,n}=h_{m,n} v_{m,n},  \quad W(0) \cdot v_{m,n}=\beta_{m,n} v_{m,n},$$
where  $h_{m,n} := h_{m,n} ^{(2)}  =\frac23 m^2+\frac23n^2+\frac23mn +m+n  $, $\beta_{m,n}:= \beta_{m,n} ^{(2)} = \frac{1}{8}  (m - n) ( 3 + 4 m + 2 n) (3 + 2 m + 4 n)$.
\end{lemma}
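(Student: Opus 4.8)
The plan is to verify the two claims separately: first that $v_{m,n}=e^{-m\omega_1-n\omega_2}$ actually lies in $\mathcal W(2)_{A_2}$, and then that it is a highest weight vector for $\mathcal W(2,3)_{c=-10}$ with the stated $L(0)$ and $W(0)$ eigenvalues. For the first claim, recall that $\mathcal W(2)_{A_2}=\mathrm{Ker}_{V_{\sqrt 2 A_2}}e^{-\gamma_1/2}_0\cap\mathrm{Ker}_{V_{\sqrt 2 A_2}}e^{-\gamma_2/2}_0$. Since $m\omega_1+n\omega_2\in P^+\cap Q$, the vector $-m\omega_1-n\omega_2$ (written in the $\delta_i$-basis, where $\delta_1=\tfrac13(2\gamma_1+\gamma_2)$, $\delta_2=\tfrac13(\gamma_1+2\gamma_2)$, so $\sqrt2\omega_i$ corresponds to $\delta_i$) in fact lies in the root lattice $\sqrt2 A_2$, hence $e^{-m\omega_1-n\omega_2}\in V_{\sqrt2 A_2}$. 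Then one checks $e^{-\gamma_i/2}_0\,v_{m,n}=0$ for $i=1,2$: the screening acts on $e^{\mu}$ by a mode of $Y(e^{-\gamma_i/2},z)e^\mu$, and the relevant residue vanishes precisely because $\langle -\gamma_i/2,\mu\rangle\in\mathbb Z_{\ge 0}$ for $\mu=-m\omega_1-n\omega_2$ with $m,n\ge 0$ — this is the standard criterion for a lattice vector to be annihilated by a long-screening-type operator, and it is the same computation already invoked for $H_1,H_2$ and for the modules $E^{\kappa}[m,n]$ in Proposition 3.2. (Indeed $v_{m,n}$ here is, up to the lattice in which one works, the same vector $v_{m,n}=e^{-m\delta_1-n\delta_2}$ appearing in Proposition 3.2 with $p=2$, $\kappa=\tfrac12$.)

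For the second claim I would argue that $v_{m,n}$ is annihilated by all positive modes of $\omega$ and of $w_3$, and compute the zero-mode eigenvalues directly. Because $v_{m,n}=e^\mu$ is a lowest-conformal-weight-type vector in the lattice vertex algebra, $\omega_{(n)}e^\mu=0$ and $(w_3)_{(n)}e^\mu=0$ for $n$ sufficiently large; the point is that $v_{m,n}$ has the minimal conformal weight in the $\mathcal W(2,3)$-submodule it generates, which follows from the explicit form of $\omega=\omega_{st}+\tfrac{p-1}{p}(\gamma_1(-2)+\gamma_2(-2))$ and the fact that $e^\mu$ is a Heisenberg highest weight vector. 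Then $L(0)e^\mu=h\,e^\mu$ with $h=\tfrac12\langle\mu,\mu\rangle+\langle\mu,\tfrac{p-1}{p}(\gamma_1+\gamma_2)\rangle$ evaluated at $p=2$, $\mu=-m\omega_1-n\omega_2$; substituting the inner products of the $\sqrt2 A_2$-lattice gives $h_{m,n}=\tfrac23 m^2+\tfrac23 n^2+\tfrac23 mn+m+n$, exactly matching $h^{(2)}_{m,n}$ from Proposition 3.2. Similarly $W(0)e^\mu$ is computed from the explicit expression for $w_3$: the cubic Heisenberg part contributes a cubic polynomial in $\langle\mu,\gamma_i\rangle$, the quadratic "$\gamma(-1)\gamma(-2)$" part and the linear "$\gamma(-3)$" part contribute lower-degree corrections proportional to powers of $p-1$, and with the normalization $\tfrac{4\sqrt2}{27}w_3$ (needed to match the $\mathcal W(2,3)_c$ bracket relations of Section 2.2) one recovers $\beta_{m,n}=\tfrac18(m-n)(3+4m+2n)(3+2m+4n)$ — which is $\beta^{(2)}_{m,n}$ from Proposition 3.2 with the $\tfrac{4\sqrt2}{27}$ factor absorbed, since $\beta^{(2)}_{m,n}=\tfrac{(m-n)(1+4m+2n)(1+2m+4n)}{8}$ at $p=2$ differs only through the shift already present there; I would double-check this normalization bookkeeping carefully against Section 2.2.

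The routine part is the inner-product arithmetic in the $\sqrt2 A_2$ lattice and plugging into the formulas for $h^{(p)}_{m,n}$, $\beta^{(p)}_{m,n}$ at $p=2$; this is essentially a specialization of Proposition 3.2, so the cleanest writeup is simply: "By Proposition 3.2 with $p=2$, $\kappa=\tfrac12$, and $\lambda=m\omega_1+n\omega_2$, the vector $v_{m,n}$ generates a highest weight $\mathcal W(2,3)_{c=-10}$-module with the stated weights; it remains only to observe $v_{m,n}\in\mathcal W(2)_{A_2}$, which holds because $\langle\gamma_i/2,m\omega_1+n\omega_2\rangle\in\mathbb Z_{\ge0}$." The main obstacle — and the only genuinely delicate point — is reconciling the two normalizations of $w_3$: the lattice-theoretic $w_3$ defined in Section 3 versus the $\mathcal W(2,3)_c$-generator $W$ with the bracket relations fixed in Section 2.2, where the rescaling factor $\tfrac{4\sqrt2}{27}$ enters. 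One must make sure the $\beta_{m,n}$ quoted in this lemma is the eigenvalue of the correctly normalized $W(0)$, i.e. consistent with $\beta_\lambda$ as used in Lemma 2.1 and with $\beta^{(2)}_{m,n}$ in Proposition 3.2; getting this factor right (and checking it is the same convention used downstream when matching characters via Lemma 2.1) is where I would spend the care.
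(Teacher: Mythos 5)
Your proposal matches the paper's proof: the paper likewise verifies membership in $\mathcal{W}(2)_{A_2}$ by checking that the screening operators $e^{-\frac12\gamma_1}_0$ and $e^{-\frac12\gamma_2}_0$ annihilate $v_{m,n}$, and then cites Proposition \ref{T} (with $p=2$) for the $L(0)$ and $W(0)$ eigenvalues. One small correction to your normalization worry: at $p=2$ one has $-3+3p=3$, so $\beta^{(2)}_{m,n}$ from Proposition \ref{T} is literally $\frac18(m-n)(3+4m+2n)(3+2m+4n)$, identical to the lemma's $\beta_{m,n}$ with no rescaling by $\frac{4\sqrt{2}}{27}$ needed.
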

\begin{proof} From the definition, $v_{m,n}=e^{-m \gamma_1-n \gamma_2}$. Under the imposed condition on $m$ and $n$, these vectors are annihilated by the screening operators $e^{-\frac12 \gamma_1}_0$ and  $e^{-\frac12 \gamma_2}_0$ and therefore $v_{m,n} \in \mathcal{W}(2)_{A_2}$.  Computation of highest weights   follows from Proposition \ref{T}.
\end{proof}

\begin{theorem}  \label{main1}Let $\g=\frak{sl}(3)$, we have:
\item[(1)] $\mathcal W(2)^{\g}_{A_2}  \cong \mathcal W(2,3)_{c=-10}$,
\item[(2)] $$\mathcal{W}(2)_{A_2} \cong \bigoplus_{n,m \geq 0, \atop n \equiv m (mod \ 3)} V_{\frak{sl}(3)}(n \omega_1+m \omega_2) \otimes L^W \left(-10, h_{m,n} ,\beta_{m,n} \right), $$
\item[(3)]  \begin{equation} \label{dec}
\mathcal{W}^0(2)_{A_2} \cong \bigoplus_{n,m \geq 0, \atop n \equiv m (mod \ 3)}  {\rm min}(m+1,n+1) L^W \left(-10, h_{m,n} ,\beta_{m,n}\right).
\end{equation}
\end{theorem}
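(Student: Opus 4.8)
The plan is to take the quantum--Galois decomposition of Proposition~\ref{comp-red-galois},
$$\mathcal{W}(2)_{A_2}=\bigoplus_{n,m\ge 0,\ n\equiv m\,(3)}V_{\frak{sl}(3)}(n\omega_1+m\omega_2)\otimes L[m,n],$$
with each $L[m,n]$ irreducible over $\mathcal{W}(2)^{\g}_{A_2}$, and to pin down the $L[m,n]$ by a character squeeze. Everything reduces to two points. (I) $\mathcal{W}(2)^{\g}_{A_2}=\mathcal{W}(2,3)_{c=-10}$: taking $\g$--invariants kills every summand except $\lambda=0$, so $\mathcal{W}(2)^{\g}_{A_2}=L[0,0]$, and once we know ${\rm ch}[L[0,0]]={\rm ch}[\mathcal{W}(2,3)_{c=-10}]$ the inclusion $\mathcal{W}(2,3)_{c=-10}\subseteq\mathcal{W}(2)^{\g}_{A_2}$ of Theorem~\ref{derivation} becomes an equality. (II) Each $L[m,n]$ contains a $\mathcal{W}(2,3)_{c=-10}$--highest weight vector of weight $(h_{m,n},\beta_{m,n})$; granting (I), $L[m,n]$ is then an irreducible $\mathcal{W}(2,3)_{c=-10}$--module with a highest weight vector of that weight, hence $L[m,n]\cong L^{W}(-10,h_{m,n},\beta_{m,n})$, which is part~(2). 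Part~(1) is the special case $L[0,0]\cong L^{W}(-10,0,0)=\mathcal{W}(2,3)_{c=-10}$, and part~(3) will drop out by extracting the weight--zero component of the decomposition.

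For the highest weight vectors: by Lemma~\ref{hw}, $v_{m,n}=e^{-m\delta_1-n\delta_2}$ lies in $\mathcal{W}(2)_{A_2}$, and it is a lowest weight vector for the $\g=\frak{sl}(3)$--action, of $\g$--weight the lowest weight of $V_{\frak{sl}(3)}(n\omega_1+m\omega_2)$; hence $v_{m,n}$ sits in that isotypic component, $v_{m,n}=v_{\mathrm{low}}\otimes u_{m,n}$. Since $\mathcal{W}(2,3)_{c=-10}$ commutes with the $\g$--action (Theorem~\ref{derivation}), $\mathcal{W}(2,3)_{c=-10}.v_{m,n}=v_{\mathrm{low}}\otimes(\mathcal{W}(2,3)_{c=-10}.u_{m,n})$, and since $e^{-m\delta_1-n\delta_2}$ is annihilated by all positive modes of $L$ and $W$ in the Fock module $F_{-m\delta_1-n\delta_2}$ of $M_{\gamma_1,\gamma_2}(1)$, the vector $u_{m,n}$ is highest weight of weight $(h_{m,n},\beta_{m,n})$ (this is Proposition~\ref{T}). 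The delicate point is that this cyclic submodule is large, namely
$${\rm ch}\bigl[\mathcal{W}(2,3)_{c=-10}.v_{m,n}\bigr]\ \ge\ b_{m,n}(q):=\frac{q^{h_{m,n}}(1-q^{m+1})(1-q^{n+1})(1-q^{m+n+2})}{(q)_\infty^2},$$
the right side being the bound ${\rm ch}[T^{\kappa}_{n\omega_1+m\omega_2,0}]$ of Lemma~\ref{irr-upper} (with $\kappa=\tfrac12$); I would prove this inside the free--field realization, using the long screenings $E_1,E_2$ together with the short screenings $e^{-\gamma_i/2}_0$ (which already annihilate $v_{m,n}$) to control the $\mathcal{W}(2,3)_{c=-10}$--submodule structure of $F_{-m\delta_1-n\delta_2}$. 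It then follows that ${\rm ch}[L[m,n]]\ge b_{m,n}(q)$ for every $n\equiv m\,(3)$.

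Now the squeeze. First, $\mathcal{W}^{0}(2)_{A_2}$ is exactly the $\frak{sl}(3)$--weight--zero subspace of $\mathcal{W}(2)_{A_2}$: this is immediate from the screening--kernel definitions, since $M_{\gamma_1,\gamma_2}(1)$ is the lattice--degree--zero part of $V_{\sqrt 2 A_2}$ and $h_i=\tfrac12\gamma_i(0)$ reads off the lattice degree. Hence, as a $\mathcal{W}(2,3)_{c=-10}$--module,
$$\mathcal{W}^{0}(2)_{A_2}=\bigoplus_{n\equiv m\,(3)}\bigl(\dim V_{\frak{sl}(3)}(n\omega_1+m\omega_2)_{0}\bigr)\,L[m,n]=\bigoplus_{n\equiv m\,(3)}\min(m+1,n+1)\,L[m,n],$$
using the classical zero--weight multiplicity $\dim V_{\frak{sl}(3)}(n\omega_1+m\omega_2)_0=\min(m+1,n+1)$ for $n\equiv m\,(3)$. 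Comparing with Theorem~\ref{BKMZ},
$$\sum_{n\equiv m\,(3)}\min(m+1,n+1)\,{\rm ch}[L[m,n]]={\rm ch}[\mathcal{W}^{0}(2)_{A_2}]=\sum_{n\equiv m\,(3)}\min(m+1,n+1)\,b_{m,n}(q),$$
and since ${\rm ch}[L[m,n]]\ge b_{m,n}(q)$ coefficientwise and $\min(m+1,n+1)>0$, we get ${\rm ch}[L[m,n]]=b_{m,n}(q)$ for every pair. In particular ${\rm ch}[\mathcal{W}(2)^{\g}_{A_2}]={\rm ch}[L[0,0]]=b_{0,0}(q)={\rm ch}[\mathcal{W}(2,3)_{c=-10}]$, which together with the inclusion of Theorem~\ref{derivation} proves (1). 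Then $L[m,n]$ is irreducible over $\mathcal{W}(2)^{\g}_{A_2}=\mathcal{W}(2,3)_{c=-10}$ and contains $u_{m,n}$, so $L[m,n]\cong L^{W}(-10,h_{m,n},\beta_{m,n})$, proving (2); substituting this into the weight--zero decomposition gives (3). As a byproduct ${\rm ch}[L^{W}(-10,h_{m,n},\beta_{m,n})]=b_{m,n}(q)$, i.e.\ the modules $T^{\kappa}_{\lambda,0}$ at $\kappa=\tfrac12$ are irreducible for dominant $\lambda\in Q$.

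The main obstacle is the lower bound ${\rm ch}[\mathcal{W}(2,3)_{c=-10}.v_{m,n}]\ge b_{m,n}(q)$ of the second paragraph. A priori this cyclic module is only known to be \emph{some} highest weight quotient of the corresponding Verma module, whose character strictly dominates $b_{m,n}(q)$, so the work is to rule out that extra singular vectors cut it down below $b_{m,n}(q)$. I would do this by exhibiting the relevant singular vectors explicitly from the screening data --- the three factors $1-q^{m+1}$, $1-q^{n+1}$, $1-q^{m+n+2}$ correspond to the two simple reflections and the longest element of the Weyl group of $\frak{sl}(3)$ in the twisted action --- and matching with Arakawa--Frenkel's realization of $T^{\kappa}_{\lambda,0}$ as a Drinfeld--Sokolov reduction of a Weyl module. (In the diagonal case $m=n$ the bound can alternatively be extracted from Proposition~\ref{invarijant-1}(1) and the rank--one identities of Lemma~\ref{2.1} and Proposition~\ref{par-char}; the off--diagonal cases genuinely require the screening analysis.)
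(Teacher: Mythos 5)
Your skeleton coincides with the paper's: decompose $\mathcal W(2)_{A_2}$ via quantum Galois theory (Proposition \ref{comp-red-galois}), read off $\mathcal W^0(2)_{A_2}=\bigoplus_{n\equiv m\,(3)}\min(m+1,n+1)\,L[m,n]$ from the zero--weight multiplicities, compare with Theorem \ref{BKMZ}, and squeeze. Granting your coefficientwise lower bound ${\rm ch}\bigl[\mathcal W(2,3)_{c=-10}.v_{m,n}\bigr]\geq b_{m,n}(q):={\rm ch}[T^{\kappa}_{m\omega_1+n\omega_2,0}](q)$ (with $\kappa=\tfrac12$), the rest of your argument closes correctly. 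But that bound is where the entire content of the theorem lives, and you do not prove it: you rightly flag it as the main obstacle and then offer only a program (exhibit the singular vectors attached to the Weyl reflections inside the Fock module and match with the Arakawa--Frenkel reduction). Since a posteriori $L[m,n]=\mathcal W(2,3)_{c=-10}.v_{m,n}\cong L^W(-10,h_{m,n},\beta_{m,n})$ has character exactly $b_{m,n}(q)$, proving your lower bound directly is essentially equivalent to proving Corollary \ref{irr-WW}, i.e.\ the irreducibility of $T^{\kappa}_{\lambda,0}$ at $\kappa=\tfrac12$ --- which is precisely the statement the paper is only able to extract as a \emph{consequence} of this theorem (cf.\ Remark \ref{koos} on the status of these character formulas). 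So as written the proposal has a genuine gap at its load-bearing step, and the proposed route to fill it is at least as hard as the theorem itself.

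The paper avoids needing any lower bound at general $(m,n)$. The only input of that kind it uses sits at $(0,0)$, where it comes for free: at $k=-\tfrac32$ the universal affine vertex algebra $V^{k}(\frak{sl}(3))$ is simple (Gorelik--Kac), hence so is its quantized Drinfeld--Sokolov reduction (Arakawa), so the simple algebra $\mathcal W(2,3)_{c=-10}$ coincides with the universal one $T^{\kappa}_{0,0}$ and ${\rm ch}[\mathcal W(2,3)_{c=-10}](q)=b_{0,0}(q)$ on the nose. The identity $\sum\min(m+1,n+1)\,{\rm ch}[L[m,n]]=\sum\min(m+1,n+1)\,b_{m,n}(q)$ then forces each ${\rm ch}[L[m,n]]$ to be one of the $b$'s, and the inclusion $\mathcal W(2,3)_{c=-10}\subseteq\mathcal W(2)^{\g}_{A_2}=L[0,0]$ pins down $L[0,0]\cong\mathcal W(2,3)_{c=-10}$, proving (1); after that every $L[m,n]$ is an irreducible $\mathcal W(2,3)_{c=-10}$--module with highest weight $(h_{m,n},\beta_{m,n})$ by Lemma \ref{hw}, so $L[m,n]\cong L^W(-10,h_{m,n},\beta_{m,n})$, and Lemma \ref{irr-upper} supplies the complementary upper bound that yields the character identification. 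I recommend you restructure accordingly: take the $(0,0)$ identification from the simplicity of $V^{-3/2}(\frak{sl}(3))$ as the input, and let the irreducibility of the $T^{\kappa}_{\lambda,0}$ fall out at the end rather than attempting the screening computation as a prerequisite.
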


\begin{proof}
First we notice that $ \mathcal W(2)^{\g}_{A_2}  \subset \mathcal W^0 (2) _{A_2}$, and therefore   $\mathcal W^0 (2) _{A_2}$ is a completely reducible  $ \mathcal W (2)^{\g}_{A_2}$--module.
The contribution of an irreducible $\frak{sl}(3)$-module in $\mathcal{W}^{0}(2)_{A_2}$ is controlled by the weight zero subspace
whose dimension is given by
$${\rm dim}(V_{\frak{sl}(3)}(m \omega_1+n \omega_2)_0)=\min (m+1,n+1), \ \ m \omega_1+n  \omega_2 \in Q \cap P^+.$$
Thus we have
\bea  \mathcal W^ 0 (2)_{A_2} = \bigoplus _{n,m \geq 0, \atop n \equiv m  \mod (3) }   \min (m+1,n+1)  L[m, n]. \label{dec-wp0} \eea
From Theorem \ref{BKMZ}, we get
$${\rm ch}[\mathcal W^0 (2)_{A_2}](q)=\sum_{n,m \geq 0, \atop n \equiv m (mod \ 3)}  {\rm min}(m+1,n+1) {\rm ch}[T^\kappa_{m \omega_1+n \omega_2,0}](q), $$
 implying that  ${\rm ch}[T^\kappa_{m \omega_1+n \omega_2,0}](q)$ is the $q$--character of certain irreducible $\mathcal W(2)^{\g}_{A_2}$--module.
Since
\begin{itemize}
\item $\mathcal W(2,3)_{c=-10} \subset \mathcal W (2)^{\g}_{A_2}$,
\item ${\rm ch} [\mathcal W(2,3)_{c=-10}](q) = {\rm ch}[T^\kappa_{0,0}](q) <  {\rm ch} [\mathcal W(2)^{\g}_{A_2}](q),  $
\end{itemize}
we conclude that  $\mathcal W(2)^{\g}_{A_2} \cong \mathcal W(2,3)_{c=-10}$.  This proves the assertion (1). Therefore all $L[m,n]$ are irreducible $\mathcal W(2,3)_{c=-10}$--modules. Lemma \ref{hw} implies that
$$L[m,n]= L^{W} (-10, h_{m,n}, \beta_{m,n}). $$
Then assertion (2) follows from  Proposition  \ref{comp-red-galois}, and assertion (3) from the decomposition (\ref{dec-wp0}).
\end{proof}

From the previous result we also easily get a proof of Theorem \ref{dec-sl2}.
Also, as a consequence, we obtain character formulas for irreducible $W(2,3)$-modules considered in
Lemma \ref{irr-upper}.
\begin{corollary} \label{irr-WW} For $m,n \geq 0$, we have
$${\rm ch}[L^W(-10, \frac23m^2+\frac23 mn + \frac23n^2+m+n,\beta_{m,n} )](q)$$
$$=\frac{q^{\frac23m^2+\frac23 mn + \frac23n^2+m+n }(1-q^{m+1})(1-q^{n+1})(1-q^{m+n+2})  }{(q;q)_\infty^2}.$$
\end{corollary}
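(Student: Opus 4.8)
The plan is to derive this directly from Theorem~\ref{main1} and the upper bound of Lemma~\ref{irr-upper}, with essentially no new input. Write $B_{m,n}(q)$ for the right-hand side of the asserted identity; the computation inside the proof of Lemma~\ref{irr-upper} shows $B_{m,n}(q)={\rm ch}[T^{\kappa}_{m\omega_1+n\omega_2,0}](q)$ for $\kappa=\frac12$, and that lemma gives ${\rm ch}[L^W(-10,h_{m,n},\beta_{m,n})](q)\le B_{m,n}(q)$ for all $m,n\ge 0$. Thus the only thing to prove is the reverse inequality, and I would do this first for the residues $n\equiv m\pmod 3$ (which is immediate) and then for the remaining pairs by the same device applied to the generalized vertex algebra $\Omega(2)_{A_2}$.

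For $n\equiv m\pmod 3$: taking graded dimensions in Theorem~\ref{main1}(3) gives ${\rm ch}[\mathcal W^0(2)_{A_2}](q)=\sum_{m,n\ge 0,\,n\equiv m\,(3)}\min(m+1,n+1)\,{\rm ch}[L^W(-10,h_{m,n},\beta_{m,n})](q)$, while Theorem~\ref{BKMZ} writes the very same series as $\sum_{m,n\ge 0,\,n\equiv m\,(3)}\min(m+1,n+1)\,B_{m,n}(q)$. Subtracting, $\sum_{m,n\ge 0,\,n\equiv m\,(3)}\min(m+1,n+1)\,(B_{m,n}(q)-{\rm ch}[L^W(-10,h_{m,n},\beta_{m,n})](q))=0$. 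Each summand has non-negative coefficients by Lemma~\ref{irr-upper} and is supported in $q$-degrees $\ge h_{m,n}\ge m+n$, so only finitely many summands contribute to any fixed power of $q$; a locally finite sum of power series with non-negative coefficients, weighted by positive integers, vanishes only if every summand vanishes. Hence ${\rm ch}[L^W(-10,h_{m,n},\beta_{m,n})](q)=B_{m,n}(q)$ whenever $n\equiv m\pmod 3$. Note that no injectivity of $(m,n)\mapsto(h_{m,n},\beta_{m,n})$ is required: non-negativity already forces termwise equality, even if some highest weights collide.

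For $n\not\equiv m\pmod 3$ I would rerun the argument with $\Omega(2)_{A_2}$ in place of $\mathcal W(2)_{A_2}$. This generalized vertex algebra is simple, carries the $\g=\frak{sl}(3)$-action of Theorem~\ref{derivation} with $\Omega(2)_{A_2}^{\g}=\mathcal W(2,3)_{c=-10}$, and is graded by the full weight lattice $P$ rather than the root lattice $Q$, so its $\g\times\mathcal W(2,3)_{c=-10}$-isotypic decomposition ranges over all $\lambda\in P^+$. By Proposition~\ref{T} the vector $v_{m,n}$ lies in $\Omega(2)_{A_2}$ for \emph{every} $m,n\ge 0$ and generates a highest-weight $\mathcal W(2,3)_{c=-10}$-module of highest weight $(h_{m,n},\beta_{m,n})$, so the multiplicity space attached to $m\omega_1+n\omega_2$ is $L^W(-10,h_{m,n},\beta_{m,n})$; comparing graded dimensions with a character formula for $\Omega(2)_{A_2}$ and invoking the non-negativity argument once more yields the identity for all $m,n\ge 0$. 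The congruent case is genuinely routine, and the one place that takes real work — or that one could simply avoid by restricting the statement to $n\equiv m\pmod 3$ — is this last step: proving simplicity and a quantum-Galois-type decomposition for the \emph{generalized} vertex algebra $\Omega(2)_{A_2}$ (the analogues of Lemma~\ref{simple} and Proposition~\ref{comp-red-galois}) and pinning down its character.
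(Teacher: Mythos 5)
Your proposal is correct and follows essentially the same route as the paper: the congruent case is obtained by comparing Theorem \ref{main1}(3) with Theorem \ref{BKMZ} and using the termwise upper bound of Lemma \ref{irr-upper} (your positivity/local-finiteness argument is exactly what makes the paper's ``follows directly'' rigorous), and the non-congruent case is handled by rerunning the analysis on the $P$-graded extension, which the paper phrases equivalently as ``the same analysis applied to $\mathcal{W}(2)_{A_2}$-modules'' (i.e.\ the $P/Q$-components of your $\Omega(2)_{A_2}$) and likewise leaves without details. You are also right to flag that this second step is where the unwritten work lies.
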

\begin{proof} For $m = n \mod 3$ this follows directly from Theorem \ref{main1} and Lemma \ref{irr-upper}.
For $m \neq n \mod 3$, this is a consequence of the same analysis applied to $\mathcal{W}(2)_{A_2}$-modules. We omit details
here.
\end{proof}

 \begin{remark}\label{koos} We should point out that irreducible $W(2,3)$-modules and their characters
were studied in an old work of Koos and Driel \cite{KV}. In particular, they analyzed irreducible representations with central charge $c=50-24p-\frac{24}{p}$, for $p \in \mathbb{N}$, parametrized by dominant integral weights.

For $p=1$ (i.e. $c=2$), the structure of such representations is well-known due to unitarity of the bosonic construction.  For $p \geq 3$, they proposed explicit formulas for
characters based on summation over the finite Weyl group, subject to conjectural embedding formulas
among Verma modules controlled by certain double cosets in the affine Weyl group of $\frak{sl}(3)$. These embeddings formulas are recently (rigorously) proven by by Dhillon \cite{Gurbir}. His result supposedly clarifies the character formulas used in \cite{KV} (and also in \cite{FT} for other higher rank algebras). However, for $p=2$ (i.e. $c=-10$)  the embedding structure
among Verma modules \cite[Table VIII]{KV} is different compared to $p=3$ (i.e. $c=-30$); specifically \cite[Table IX]{KV}. Even though  Table VIII yields correct formulas for ${\rm ch}[L^W(-10,0,0)](q)$ and ${\rm ch}[L^W(-10,4,0)](q)$, we believe that our Corollary \ref{irr-WW} gives the first rigorous derivations of characters in all cases.
\end{remark}

  \section{The generators of $\mathcal{W}^{0} (2) _{A_2}$}

  \begin{theorem} We have:
  \item[(1)]  The vertex algebra $\mathcal{W}^{0} (2)_{A_2}$  is generated by $N_{-3/2}(\frak{sl}(2)) + N_{-3/2} (2)$.

  \item[(2)] The vertex algebra $\mathcal{W}^{0} (2)_{A_2}$ is a simple $\mathcal W$-algebra generated by the conformal
  vector and three primaries of weight $3,4$ and $4$.
  \end{theorem}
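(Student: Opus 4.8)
The theorem splits into two assertions, which I would treat in turn, deriving (2) from (1).

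\emph{Part (1).} The input is Proposition~\ref{invarijant-1}, which gives $\mathcal W^{0}(2)_{A_2}=\bigoplus_{s\ge 0}N_{-3/2}(2s)$ and, from its proof (the decomposition of $\mathcal U^{(2)}$ as a $\g_0\times N_{-3/2}(\frak{sl}(2))$--module), that each summand $N_{-3/2}(2s)$ is an irreducible $N_{-3/2}(\frak{sl}(2))$--module, these being pairwise non-isomorphic since $N_{-3/2}(2s)$ has lowest conformal weight $2s(s+1)$. Let $U$ be the vertex subalgebra generated by $N_{-3/2}(\frak{sl}(2))+N_{-3/2}(2)$. As $U\supseteq N_{-3/2}(\frak{sl}(2))$ and $U$ is stable under $n$--th products with it, $U$ is an $N_{-3/2}(\frak{sl}(2))$--submodule of $\mathcal W^{0}(2)_{A_2}$, hence $U=\bigoplus_{s\in S}N_{-3/2}(2s)$ for some $S\subseteq\mathbb Z_{\ge 0}$ with $0,1\in S$. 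I would then show $S=\mathbb Z_{\ge 0}$ by induction: if $s\in S$, then $\bigl(N_{-3/2}(2)\bigr)_{(n)}N_{-3/2}(2s)\subseteq U$ for every $n$, and its image under the projection of $\mathcal W^{0}(2)_{A_2}$ onto the summand $N_{-3/2}(2(s+1))$ is an $N_{-3/2}(\frak{sl}(2))$--submodule of that irreducible module; once one knows this image is nonzero for some $n$, it is all of $N_{-3/2}(2(s+1))$, so $s+1\in S$ and the induction closes to give $U=\mathcal W^{0}(2)_{A_2}$.

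\emph{The main obstacle.} The one substantive point is that last non-vanishing, i.e.\ that $N_{-3/2}(\frak{sl}(2))\oplus N_{-3/2}(2)$ is not already a subalgebra and that the relevant ``leading'' component of the product reaches $N_{-3/2}(2(s+1))$ for all $s$. I would get this from the explicit realization of \cite{A-TG}: inside the generalized vertex algebra $\mathcal U^{(2)}=\bigoplus_{j\ge 0}\rho_{2j}\otimes N_{-3/2}(2j)$ the multiplication restricted to the $\rho$--factors reproduces the $\frak{sl}(2)$--tensor structure of the coordinate ring $\mathbb C[u,v,w]/(uw-v^{2})=\bigoplus_{s\ge 0}V_{\frak{sl}(2)}(2s\omega_1)$, which is generated by its degree--one piece; pushing this non-degeneracy through the realization yields a concrete nonzero product landing in $N_{-3/2}(2(s+1))$ for each $s$. (Alternatively, the weight--$4$ singular vectors $H_1,H_2$ lie in $N_{-3/2}(2)$, and one can compute $(H_i)_{(n)}v$ for suitable $v\in N_{-3/2}(2s)$ directly in the lattice model.)

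\emph{Part (2).} Simplicity of $\mathcal W^{0}(2)_{A_2}=N_{-3/2}(\frak{sl}(3))$ is already established in Lemma~\ref{simple}, so by Part (1) it suffices to produce finite strong generating sets of $N_{-3/2}(\frak{sl}(2))$ and of $N_{-3/2}(2)$. For $N_{-3/2}(\frak{sl}(2))$, Theorem~\ref{dec-sl2} gives $N_{-3/2}(\frak{sl}(2))=\bigoplus_{m\ge 1}L^{W}(-10,2m(m-1),0)$ as a $\mathcal W(2,3)_{c=-10}$--module, the $m=1$ summand being the vacuum module $\mathcal W(2,3)_{c=-10}=\langle\omega,w_3\rangle$. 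Picking a primary vector $p_4$ of conformal weight $4$ spanning the top of the $m=2$ summand $L^{W}(-10,4,0)$, the subalgebra $\langle\omega,w_3,p_4\rangle$ contains $L^{W}(-10,0,0)\oplus L^{W}(-10,4,0)$, and running the induction of Part (1) one level down (with $\mathcal W(2,3)_{c=-10}$ and $L^{W}(-10,4,0)$ in place of $N_{-3/2}(\frak{sl}(2))$ and $N_{-3/2}(2)$), together with simplicity of $N_{-3/2}(\frak{sl}(2))$, gives $N_{-3/2}(\frak{sl}(2))=\langle\omega,w_3,p_4\rangle$. For $N_{-3/2}(2)$: it is an irreducible $N_{-3/2}(\frak{sl}(2))$--module whose lowest conformal weight is $4$ with one--dimensional top space, so any nonzero weight--$4$ vector $q_4$ is primary and generates $N_{-3/2}(2)$ over $N_{-3/2}(\frak{sl}(2))$. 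Combining with Part (1), $\mathcal W^{0}(2)_{A_2}$ is strongly generated by the conformal vector $\omega$ together with the three primaries $w_3,p_4,q_4$ of weights $3,4,4$; as it is simple, this is the claimed simple $\mathcal W$--algebra. The subsidiary non-vanishing statements needed for the $N_{-3/2}(\frak{sl}(2))$ step are of the same kind as the obstacle above, and I would settle them by the same realization-based computation.
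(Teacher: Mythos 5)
Your Part (1) is essentially the paper's argument in different packaging: the paper derives $W_2\cdot W_{2\ell}=W_{2\ell+2}+W_{2\ell-2}$ inside $\mathcal V^{(2)}_0=\bigoplus_\ell L_{-3/2}(2\ell\omega_1)$ from the $\frak{sl}(2)$-action, quantum Galois theory and the argument of Dong--Griess, then restricts to the parafermion parts to get $N_{-3/2}(2)\cdot N_{-3/2}(2\ell)=N_{-3/2}(2\ell+2)+N_{-3/2}(2\ell-2)$; your induction over the irreducible summands of Proposition \ref{invarijant-1}, with non-vanishing supplied by the coordinate-ring picture $\bigoplus_s V_{\frak{sl}(2)}(2s\omega_1)$, is exactly the content of that lemma, so the ``main obstacle'' you isolate is resolved by the same tool the paper invokes. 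Part (2) is where you genuinely diverge. The paper takes the known generating set of the rank-one parafermion algebra $N_{-3/2}(\frak{sl}(2))$ in weights $2,3,4,5$, adds the weight-$4$ top vector of $N_{-3/2}(2)$, and then disposes of the weight-$5$ generator by a (stated but not displayed) computation. You instead invoke Theorem \ref{dec-sl2} to generate $N_{-3/2}(\frak{sl}(2))$ by $\omega$, $w_3$ and a single weight-$4$ primary $p_4$; this is legitimate (no circularity, since Theorem \ref{dec-sl2} is proved via Proposition \ref{invarijant-1}, Lemma \ref{invarijante-2} and Theorem \ref{main1}, independently of the present theorem) and it buys you a cleaner count that never sees the weight-$5$ field. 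The cost is a second non-vanishing statement --- that $\langle\omega,w_3,p_4\rangle$ reaches every summand $L^{W}(-10,2m(m+1),0)$ --- which is not the same as the one in Part (1): there the products are controlled by the $\frak{sl}(2)$-equivariance of $\mathcal V^{(2)}_0$, whereas here you would need the analogous statement for the $\frak{sl}(3)$-isotypic components of $\mathcal W(2)_{A_2}$ restricted to $\frak{gl}(2)$-invariants (or a direct lattice computation). You flag this and propose the right kind of argument, so I would call this a correct alternative route whose remaining gap is of the same size as the paper's own ``it is easy to see''; just be aware that it is an additional claim to verify, not a repetition of the one already handled in Part (1).
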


  \begin{proof}    Consider again the subalgebra
  $$\mathcal V^{(2)} _0 = \bigoplus_{\ell = 0} ^{\infty} W_{2 \ell}, \quad W_{2\ell} = L_{-3/2} (2 \ell \omega_1)$$ of the $N=4$ superconformal algebra with central charge $c=-9$.
  Using the $\frak{sl}(2)$  action,  quantum Galois theory,  and the same arguments as in the \cite[Lemma 2.6]{DG}, we get
  $$ W_2 \cdot W_{2\ell} = W_{2\ell+2} + W_{2\ell-2} \quad (\ell \ge 1),$$
 where the dot product is defined
 $$U \cdot V={\rm span} \{ u_m v :  u \in U , v \in V, m \in \mathbb{Z} \}.$$
But restriction to the parafermion algebra gives that
 $$ N_{-3/2} (2)  \cdot N_{-3/2} (2\ell) = N_{-3/2} (2\ell+2)  +  N_{-3/2} (2\ell-2)\quad (\ell \ge 1).$$
  This proves assertion (1).

  (2) By Part (1), $\mathcal{W}^0(2)_{A_2}$ is generated by primary vectors of degree $2,3,4,4,5$. However it is easy to see that
  the weight $5$ primary vector can be expressed using other primaries.

  \end{proof}

\begin{remark}
We expect that $\mathcal{W}(2)_{A_2}$ is of type $(2,3,4^8)$.
\end{remark}

\section{ The decomposition of $N_{-3/2} (\frak{sl}(2))$ as a $\mathcal W(2,3)_{c=-10}$--module}

\label{dec-parafermion-1}

 We also need the following result which easily follows from \cite[Chapter 8]{GW}.

 \begin{lemma} \label{invarijante-2} For every $m, n \in {\Z}_{\ge 0}$, we have
 $$ \dim V_{\frak{sl}(3)} (n \omega_1 + m \omega_2) ^{\g_1}  =  \delta_{n,m}.$$
 \end{lemma}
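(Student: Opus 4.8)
The plan is to compute $\dim V_{\frak{sl}(3)}(n\omega_1+m\omega_2)^{\g_1}$ directly from the branching rule for the restriction from $\frak{sl}(3)$ (equivalently $\frak{gl}(3)$) to $\frak{gl}(2)$, i.e.\ the Gelfand--Tsetlin/interlacing rule of \cite[Chapter 8]{GW}. First I would pin down the position of $\g_1$ inside $\g=\frak{sl}(3)$: since $E_1,F_1$ are the Chevalley generators attached to the simple root $\alpha_1$, we have $h_1=\alpha_1^\vee$ and $h_2=\alpha_2^\vee$, so $\g_1=\langle E_1,F_1,\alpha_1^\vee,\alpha_2^\vee\rangle$ is exactly the standard Levi subalgebra $\frak{l}\cong\frak{gl}(2)$ of the maximal parabolic attached to $\alpha_1$, whose one-dimensional center is spanned by $z:=\alpha_1^\vee+2\alpha_2^\vee$ (indeed $[z,E_1]=0$, whereas $h_2=\alpha_2^\vee$ alone does not centralize $\g_0$). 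Consequently $\dim V_{\frak{sl}(3)}(n\omega_1+m\omega_2)^{\g_1}$ is the multiplicity of the trivial $\frak{l}$-module in $V_{\frak{sl}(3)}(n\omega_1+m\omega_2)|_{\frak{l}}$.

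Next I would lift the highest weight $n\omega_1+m\omega_2$ to the $GL_3$ highest weight $(n+m,m,0)$; the central $\frak{gl}(3)$-torus acts by a scalar and is irrelevant. By the classical branching rule $GL_3\downarrow GL_2$ (restriction to the upper-left block, which contains $\g_0$), the module $V_{GL_3}(n+m,m,0)$ decomposes multiplicity-freely as $\bigoplus V_{GL_2}(b_1,b_2)$ over the integers satisfying the interlacing inequalities $n+m\ge b_1\ge m\ge b_2\ge 0$. Such a constituent is trivial as an $\frak{sl}(2)$-module precisely when $b_1=b_2=:b$, and the interlacing then forces $b=m$. It remains to impose that $z$ act by zero on this one-dimensional space: on $V_{GL_2}(b,b)$ the diagonal matrix units $E_{11},E_{22}$ act by $b$ while $E_{33}$ acts by $(n+2m)-2b$, so $z=E_{11}+E_{22}-2E_{33}$ acts by $2(3b-n-2m)$, which vanishes if and only if $3b=n+2m$. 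With $b=m$ this is the condition $n=m$. Hence the trivial $\g_1$-module occurs with multiplicity $1$ when $n=m$ and with multiplicity $0$ otherwise, which is precisely $\dim V_{\frak{sl}(3)}(n\omega_1+m\omega_2)^{\g_1}=\delta_{n,m}$.

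The computation is routine once the branching rule is invoked; the only step demanding care --- and the main, minor, obstacle --- is correctly identifying the element $z$ generating the center of $\g_1$ and correctly evaluating it on a Gelfand--Tsetlin constituent, precisely because $h_2$ by itself does not commute with $\g_0$. As a cross-check one may verify the formula on the adjoint representation ($n=m=1$: the zero weight space is $2$-dimensional, the $\alpha_1$-weight space $1$-dimensional, consistent with a $1$-dimensional space of invariants) and on $V_{\frak{sl}(3)}(2\omega_1)$ (where $0$ is not a weight at all, so the invariants vanish). An alternative route, avoiding $\frak{gl}(3)$, is to identify $V_{\frak{sl}(3)}(\lambda)^{\g_1}$ with the space of $\g_0$-singular vectors of $\frak{sl}(3)$-weight $0$ and then to evaluate $\dim V_{\frak{sl}(3)}(\lambda)[0]-\dim V_{\frak{sl}(3)}(\lambda)[\alpha_1]$ via Kostant's multiplicity formula or the hexagonal weight-multiplicity pattern for $\frak{sl}(3)$; I would nonetheless present the branching-rule argument as the main proof, since it is shortest and directly matches the cited reference.
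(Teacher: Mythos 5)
Your argument is correct and is essentially the paper's own (implicit) proof: the lemma is stated with no details beyond a citation of \cite[Chapter 8]{GW} on branching laws, and your $GL_3\downarrow GL_2$ interlacing computation --- singling out the unique $\frak{sl}(2)$-trivial constituent $b_1=b_2=m$ and then evaluating the central element $z=h_1+2h_2$ of the Levi $\g_1$, which acts by $2(3b-n-2m)$ and hence forces $n=m$ --- is exactly what that citation is meant to invoke. The identification of $\g_1=\g_0+\C h_2$ as the standard Levi containing the full Cartan, and the care you take with the center (since $h_2$ alone does not centralize $\g_0$), are the only nontrivial points, and you handle both correctly.
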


\begin{theorem} \label{dec-sl2-10} Let $k =-\frac32$. As a ${\mathcal W(2,3)}_{c=-10}$--module, we have
\bea N_k(\frak{sl}(2)) \cong  \bigoplus_{m=0} ^{\infty}  L^{W}(-10, 2 m (m+1), 0). \label{dec-w23-10} \eea
\end{theorem}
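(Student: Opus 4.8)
The plan is to deduce this decomposition from the results already established for $\mathcal W(2)_{A_2}$ as a $\frak{sl}(3) \times \mathcal W(2,3)_{c=-10}$--module, combined with the identification $N_{-3/2}(\frak{sl}(2)) \cong \mathcal W(2)^{\g_1}_{A_2}$ from Proposition \ref{invarijant-1}(2), where $\g_1 \cong \frak{gl}(2)$. Concretely: I start from Theorem \ref{main1}(2),
\[
\mathcal{W}(2)_{A_2} \cong \bigoplus_{n,m \geq 0,\ n \equiv m \,(3)} V_{\frak{sl}(3)}(n \omega_1+m \omega_2) \otimes L^W(-10, h_{m,n},\beta_{m,n}),
\]
and take $\g_1$--invariants on both sides. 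Since the $\mathcal W(2,3)_{c=-10}$--action commutes with $\g \supset \g_1$, taking $\g_1$--invariants of the right-hand side just replaces each $V_{\frak{sl}(3)}(n\omega_1+m\omega_2)$ by its invariant subspace $V_{\frak{sl}(3)}(n\omega_1+m\omega_2)^{\g_1}$, whose dimension is $\delta_{n,m}$ by Lemma \ref{invarijante-2}. Hence only the terms with $n=m$ survive (and these automatically satisfy $n \equiv m \bmod 3$), giving
\[
N_{-3/2}(\frak{sl}(2)) \cong \mathcal W(2)^{\g_1}_{A_2} \cong \bigoplus_{m=0}^{\infty} L^W(-10, h_{m,m}, \beta_{m,m}).
\]

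Then I just evaluate the highest weights at $n=m$. From Lemma \ref{hw}, $h_{m,m} = \tfrac23 m^2 + \tfrac23 m^2 + \tfrac23 m^2 + m + m = 2m^2 + 2m = 2m(m+1)$, and $\beta_{m,m} = \tfrac18 (m-m)(3+4m+2m)(3+2m+4m) = 0$. So the summand is $L^W(-10, 2m(m+1), 0)$, which is exactly the right-hand side of \eqref{dec-w23-10}.

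The only genuine subtlety I anticipate is justifying that taking $\g_1$--invariants commutes with the direct sum and with tensoring by the $\mathcal W(2,3)$--modules, i.e.\ that $\big(\bigoplus_i V_i \otimes M_i\big)^{\g_1} = \bigoplus_i V_i^{\g_1} \otimes M_i$ as $\mathcal W(2,3)_{c=-10}$--modules. This is immediate because the $\g_1$--action is trivial on each $M_i$ factor and the sum is a module direct sum with $\g_1$ acting diagonally; complete reducibility of $\mathcal W(2)_{A_2}$ as a $\g \times \mathcal W(2,3)_{c=-10}$--module (Theorem \ref{main1}(2)) guarantees there is no completion/topological issue, since invariants are taken degree by degree in the (finite-dimensional) graded pieces. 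As a consistency check, one can compare characters: $\mathrm{ch}[N_k(\frak{sl}(2))](q) = \sum_{m \geq 0} \mathrm{ch}[T^{\kappa=1/2}_{m\omega_1+m\omega_2,0}](q)$, which by Corollary \ref{irr-WW} equals $\sum_{m \geq 0} q^{2m(m+1)}(1-q^{m+1})^2(1-q^{2m+2})/(q;q)_\infty^2$; this matches the $m=n$ diagonal subsum of Theorem \ref{BKMZ}, and after the $q$-series manipulation in Lemma \ref{2.1} (shifted) equals the known character of $N^k(\frak{sl}(2))$, confirming the decomposition. So the main "obstacle" is really bookkeeping rather than anything deep; the heavy lifting has been done in Sections \ref{action-sl3-direct}--\ref{dec-A2-w3}.
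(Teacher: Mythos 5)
Your proposal is correct and follows exactly the paper's own argument: take $\g_1$--invariants of the decomposition in Theorem \ref{main1}(2), use Proposition \ref{invarijant-1}(2) and Lemma \ref{invarijante-2} to isolate the diagonal $n=m$ terms, and evaluate $h_{m,m}=2m(m+1)$, $\beta_{m,m}=0$ via Lemma \ref{hw}. The extra remarks on commuting invariants with the direct sum and the character consistency check are fine but not needed beyond what the paper already records.
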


\begin{proof}
Using Proposition \ref{invarijant-1}, Lemma  \ref{invarijante-2}  and Theorem  \ref{main1}  we get:
\bea
N_{-3/2} (\frak{sl}(2)) &=&   \mathcal W_{A_2} (2) ^{\g _1} \nonumber \\
&=&   \bigoplus_{n,m \geq 0, \atop n \equiv m (mod \ 3)}   V_{\frak{sl}(3)} (n \omega_1 + m \omega_2 )^{\g_1}  \otimes  L^W \left(-10, h_{m,n} ,\beta_{m,n}\right) \nonumber \\
&=&    \bigoplus_{m \geq 0 }     L^W \left(-10, h_{m,m} ,\beta_{m,m }\right) \nonumber \\
&=&  \bigoplus_{m=0} ^{\infty}  L^{W}(-10, 2 m (m+1), 0). \nonumber
\eea
The proof follows.
\end{proof}
\begin{remark} One can also give decompositions of  $N_{-3/2} (\frak{sl}(2))$-modules $N_{-3/2}(2 \ell)$, $\ell \geq 1$. The relevant irreducible $W(2,3)_{c=-10}$-modules can be
read off from Proposition \ref{par-char}.
\end{remark}

\begin{remark} We expect that for arbitrary $p$ the vertex algebra
 $\mathcal W (p)_{A_2}  ^{\frak{gl}(2)}$ is a simple $\mathcal W$--algebra of type $\mathcal W(2,3,3p-2, 3p-1)$.
 \end{remark}


\vskip10pt {\footnotesize{}{ }\textbf{\footnotesize{}D.A.}{\footnotesize{}:
Department of Mathematics, Faculty of Science,  University of Zagreb, Bijeni\v{c}ka 30,
10 000 Zagreb, Croatia; }\texttt{\footnotesize{}adamovic@math.hr}{\footnotesize \par}

\vskip 10pt {\footnotesize{}{}\textbf{\footnotesize{}A.M.}{\footnotesize{}:
Department of Mathematics and Statistics, SUNY Albany, 1400 Washington Avenue,
Albany NY 12222, USA; }\texttt{\footnotesize{}amilas@albany.edu}{\footnotesize \par}

\vskip 10pt \textbf{\footnotesize{}Q.W.}{\footnotesize{} School of Mathematical
Sciences, Xiamen University, Fujian, 361005, China;} \texttt{\footnotesize{}
qingwang@xmu.edu.cn}{\footnotesize \par}

\end{document}